\definecolor{labelkey}{rgb}{0,1,0}
\newtheorem{thm}{Theorem}[section]
\newtheorem{cor}[thm]{Corollary}
\newtheorem{lemma}[thm]{Lemma}
\newtheorem{prop}[thm]{Proposition}
\newtheorem{defn}[thm]{Definition}
\theoremstyle{remark}
\theoremstyle{definition}
\newtheorem{rmk}[thm]{Remark}
\numberwithin{equation}{thm}
\def\beq{\begin{equation}}
\def\eeq{\end{equation}}
\def\ben{\begin{enumerate}}
\def\een{\end{enumerate}}
\DeclareSymbolFont{cyrletters}{OT2}{wncyr10}{m}{n}
\DeclareMathSymbol{\Dc}{\mathalpha}{cyrletters}{68}
\def\crash#1{}
\def\P{{\mathbb P}}
\def\Q{{\mathbb Q}}
\def\R{{\mathbb R}}
\def\C{{\mathbb C}}
\def\bU{{\mathbb U}}
\def\l{\left}
\def\r{\right}
\def\[[{\l[\l[}
\def\]]{\r]\r]}
\def\lc{\emph{loc.cit.}\;}
\def\cB{{\mathcal B}}
\def\cO{{\mathcal O}}
\def\cL{{\mathcal L}}
\def\cP{{\mathcal P}}
\def\cR{{\mathcal R}}
\def\cS{{\mathcal S}}
\def\cV{{\mathcal V}}
\def\cU{{\mathcal U}}
\def\sD{{\mathscr D}}
\def\sP{{\mathscr P}}
\def\sU{{\mathscr U}}
\def\T{{\mathbb T}}
\def\fr{{\mathfrak r}}
\def\wtilde{\widetilde}
\def\a{\alpha}
\def\id{{\rm id\,}}
\def\ker{{\rm Ker\,}}
\def\iso{\xrightarrow{\ \sim\ }}
\def\vphi{\varphi}
\def\eps{\epsilon}
\def\vu{{\vec{u}}}
\def\vy{{\vec{y}}}
\def\ue{{\underline{e}}}
\def\uV{{\underline{V}}}
\def\bU{{\mathbf{U}}}
\def\bV{{\mathbf{V}}}
\begin{document}
\setlength{\baselineskip}{0.55cm}	
\title[Optimal bases]{Optimal bases for direct images of $p$-adic differential modules over discs}
\author{Velibor Bojkovi\'c}
\address{Laboratoire de Math\'ematiques Nicolas Oresme, Caen, France}
\email{velibor.bojkovic@unicaen.fr}


\begin{abstract}
Let $(k,|\cdot|)$ be a complete and algebraically closed valued field extension of $(\mathbb{Q}_p,|\cdot|_p)$. Given a finite morphism $\vphi:\sD_1\to \sD_2$ of unit discs over $k$, a differential module $(M,D)$ on $\sD_1$ and a point $b\in \sD_2(k)$, we construct explicitly an optimal basis of space of horizontal elements for the direct image $\vphi_*(M,D)$ at $b$ in terms of the suitable chosen optimal bases of $(M,D)$ at preimages of $b$ by $\vphi$ and ramification properties of the morphism.
\end{abstract}
\maketitle
\tableofcontents

\section{Introduction}

It is a standard fact that given a linear system of differential equations with coefficients in $\C(t)$, that every solution at a regular point $t_0$ has radius of convergence at least equal to the distance of $t_0$ to the nearest singularity.

This fact utterly fails when one considers such systems with coefficients in $k(t)$, where $k$ is a non-archimedean field. For example, if $(k,|\cdot|)$ is a complete valued field extension of $(\Q_p,|\cdot |_p)$, where $|\cdot|_p$ is $p$-adic norm corresponding to the prime number $p$, then nontrivial solutions of $y'(t)=y(t)$ converge around zero only on the open disc of radius $|p|^{\frac{1}{p-1}}$.

However, one may argue that it is precisely this anomaly that inspired much of the rich theory of $p$-adic differential equations (modules) ever since Dwork's proof of rationality of the Weil zeta function back in 60's. \\

The problems that we study in this article, generally speaking, are related to finding ``optimal'' solutions of $p$-adic differential systems, that is, the solutions having the best possible radii of convergence at a chosen point. 

To state the problems more precisely, let $(k,|\cdot|)$ be as before and in addition assume that $k$ is algebraically closed, and let $\cO_t(a,r^-)$, where $r\in \R_{>0}$, denote the ring of those elements in $k[[t-a]]$ which converge for every $t$ with $|t-a|<r$. By a ($p$-adic) differential module of rank $\fr$ we mean a free $\cO_t(0,1^-)$-module $M$ of rank $\fr$ equipped with a $k$-linear map $D_t:M\to M$ satisfying the Leibnitz rule $D_t(f(t)\cdot m)=f'(t)\cdot m+ f(t)\cdot D_t(m)$. For example, one may take the trivial differential module $(T,D_t)$, which is defined with $T=\cO_t(0,1^-)\cdot e$ and $D_t$ is given by $D_t(e)=0\cdot e$. The elements of the $K$-vector space $\ker(D_t)$ are called horizontal. Once we choose a basis $\{e_1,\dots,e_\fr\}$ for $M$, and construct an $\fr\times \fr$ matrix $A(t):=-(a_{i,j}(t))^T$ where $D_t(e_i)=\sum_{j=1}^\fr a_{i,j}(t)\cdot e_i$, then we see that the horizontal elements correspond precisely to the solutions of the differential system 
\begin{equation}\label{eq: intro}
\frac{d}{dt}\vec{Y}(t)=A(t)\,\vec{Y}(t), 
\end{equation}
that belong to $\cO_t(0,1^-)^\fr$. For example, in case of $(T,D_t)$, the previous system becomes simply $\frac{d}{dt}y(t)=0\cdot y(t)$, hence its space of solutions is $k$, that is, the horizontal elements of $T$ are of the form $\alpha\cdot e$ for some $\alpha\in k$.  However, for every point $a\in k$, $|a|<1$ (we denote this set by $\sD_t^-(k)$) one has a space of solutions of system \eqref{eq: intro} that for the purpose of this introduction we denote by $\cS_a=\cS_a(M,D_t)\subset k[[t-a]]^\fr$. Then, $p$-adic Cauchy theorem assures that $\dim_k\cS_a=\fr$, which when translated to the language of differential modules means that after the extension of base ring $\cO_t(0,1^-)\to \cO_t(a,r^-)$, $\dim_k\ker_k(D_t)=\fr$. 

If now $\vec{Y}(t)\in\cS_a$, then the number $\cR_a\big(\vec{Y}(t)\big)\:=\sup\{r\in (0,1)\mid \vec{Y}(t)\in\cO_t(a,r^-)^\fr\}$ is called the radius of convergence of $\vec{Y}(t)$ (or of the corresponding element $m\in M$ as this number does not depend on the chosen basis) at $a$.

One further can consider the following numbers
$$
\cR_{a,i}:=\sup\{r\in (0,1)\mid \dim_k(\cS\cap \cO_t(a,r^-))\geq \fr-i+1\}.
$$
The $\fr$-tuple of numbers $\cR_a=\cR_a\big((M,D_t)\big):=(\cR_{a,1},\dots,\cR_{a,\fr})$ is called the multiradius of convergence of solutions of \eqref{eq: intro} at $a$. It is an important invariant of $(M,D_t)$, that is, it does not depend on the chosen basis. Then, finally we say that a basis $\vec{Y}_{a,1}(t),\dots,\vec{Y}_{a,\fr}(t)$ of $\cS_a$ is optimal if the radii of convergence $\cR_a\big(\vec{Y}_{a,1}(t)\big),\dots,\cR_a\big(\vec{Y}_{a,\fr}(t)\big)$ can be rearranged to form $\cR_a$. 

The multiradius of convergence $\cR_a$ has been extensively studied, most notably in \cite{Bal10}, \cite{Pul}, \cite{Poi-Pul2}, \cite{kedlayacompositio} just to name a few references, in the context of Berkovich curves and modules with connections on them, and where many properties such as continuity and harmonicity have been established.  For a systematic approach one may consult \cite{Kedbook} and references therein. 

A standard way, vaguely speaking, to establish some general property of $p$-adic differential modules (such as those already mentioned, continuity and harmonicity for example) is to first establish the property for differential modules which have small (components of) multiradius for every $a$ considered, or for differential modules which have each $\cR_{a,i}=1$. Then, the general case is established by reducing it to these particular ones by using some more or less familiar constructions on differential modules and being careful that we can control the behavior of the desired property under the applied constructions.

One of these constructions, that has proved to be quite useful is that of a direct image of a differential module by a finite \'etale morphism. If one considers a finite \'etale morphism $\vphi:\sD_t^-\to \sD_s^-$ of degree $d$ between open unit discs equipped with coordinates $t$ and $s$, respectively, then $\vphi$ can be represented as $s=f(t)$, where $f(t)\in\cO_t(0,1^-)$. Then, from $(M,D_t)$ one may obtain differential module $(M_\vphi,D_s)$ over $\cO_s(0,1^-)$ by simply taking $M_\vphi$ to be $M$ seen as a module over $\cO_s(0,1^-)$ via $\vphi$ and define the action of $D_s$ by $D_s\big((m)_\vphi\big):=\big(\frac{1}{f'(t)}D_t(m)\big)_\vphi$ (where $\vphi$ in the index means that we see the respective element of $M$ as an element of $M_\vphi$). Such established  differential module $(M_\vphi,D_s)$ will have rank $\fr\cdot d$ and is called direct image of $(M,D_t)$ by $\vphi$. 

If we take $e_1,\dots,t^{d-1}\cdot e_1,\dots,t^{d-1}\cdot e_\fr$ for a basis of $M_\vphi$ over $\cO_s(0,1^-)$, we obtain the associated linear system of differential equations

\begin{equation}\label{eq: intro 2}
 \frac{d}{ds}\vec{Z}(s)=B(s)\,\vec{Z}(s),
\end{equation}
where $B(s)\in M_{\fr\cdot d}(\cO_s(0,1^-))$. Let now $b\in \sD_s^-(k)$ and $a_1,\dots,a_d\in\sD_t^-(k)$ be all the preimages of $b$ by $\vphi$. The general questions that we deal with in the present articles are: How can one obtain $\cS_b(M_\vphi,D_s)$ from $\cS_{a_i}(M,D)$, $i=1,\dots,d$? How can one obtain an optimal basis of $\cS_b(M_\vphi,D_s)$ starting from some optimal bases of $\cS_{a_i}(M,D_t)$, $i=1,\dots,d$?\\

The motivation for these questions comes from \cite{BojPoi}, where authors have obtained a precise formula for $\cR_b\big((M_\vphi,D_s)\big)$ in terms of $\cR_{a_i}\big((M,D_t)\big)$ and geometric properties of the morphism $\vphi$. The formula has then been applied in \lc to study the ramification properties of certain extensions of valued fields that appear in the context of Berkovich curves as well as in establishing general ``laplacian'' formula for the multiradius of convergence of differential modules on Berkovich curves. Furthermore, it has been used in \cite{Ba-Bo} to study metric properties and skeleta of finite morphisms of Berkovich analytic curves.
 
The present article aims to refine and complete the main results of \cite{BojPoi} by answering the previous questions.\\

The ingredients that we use in our discussion are of algebro-geometric flavor, coming from the algebraic and ramification properties of $\vphi$. On the algebraic side, since morphism $\vphi$ is finite, it comes with a unique monic, degree $d$ polynomial $P(s,X)=a_0(s)+\dots+X^d\in \cO_s(0,1^-)[X]$ which has $t$ for a solution. One can also look for solutions in $k[[s-b]]$ and it turns out that for every $a_i$ as before, there is a solution $u_{a_i}(s)\in k[[s-b]]$ that has $a_i$ for the constant coefficient. Moreover, it has nonzero radius of convergence at $b$. Let $\bU(s)$ denote the Vandermonde $d\times d$ matrix $(v_{i,j})$ where $v_{i,j}=u_{a_i}(s)^{j-1}$, and let $\bV(s)$ be its inverse (which has entries with nonzero radius of convergence at $b$). Further, for each $i=1,\dots,d$, let $\mathbf{Y}_{a_i}(t)$ denote the $\fr\times \fr$ matrix whose columns are given by vectors $\vec{Y}_{a_i,1}(t),\dots,\vec{Y}_{a_i,\fr}(t)$. 

Then, we have the following (Theorem \ref{thm: fund sol} and Corollary \ref{cor: sol trivial})
\begin{thm}
 A basis of $\cS_b\big((M)_\vphi,D_s\big)$ is given by the columns of the matrix
 
 $$
 \Bigg(\bigoplus_{i=1}^\fr \bV(s)\Bigg)\Bigg(\bigoplus_{i=1}^d \mathbf{Y}_i\big(u_{a_i}(s)\big)\Bigg),
 $$ 
 where the direct sum is taken in the sense of matrices. 
 
 In particular, the solutions of the direct image of a trivial differential module are given by the columns of the matrix $\bV(s)$. 
\end{thm}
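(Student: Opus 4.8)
The plan is to pass to a small disc around $b$, where the finite \'etale morphism $\vphi$ splits completely, transport the differential module structure branch by branch, and then translate the resulting horizontal sections back into the fixed $\cO_s(0,1^-)$-basis of $M_\vphi$ by a Vandermonde computation.

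First I would record that the branch series $u_{a_1}(s),\dots,u_{a_d}(s)\in k[[s-b]]$ realize a complete splitting of $\vphi$ near $b$: they are $d$ distinct roots of the monic degree-$d$ polynomial $P(s,X)$ (distinct because $\vphi$ is \'etale, so $b$ has $d$ distinct preimages), whence $P(s,X)=\prod_{i=1}^d\bigl(X-u_{a_i}(s)\bigr)$ in $k[[s-b]][X]$; geometrically, $\vphi^{-1}$ of a sufficiently small disc around $b$ is a disjoint union of $d$ discs, each mapped isomorphically onto it by $\vphi$, with inverse $u_{a_i}$.

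Next I would decompose the direct image over $k[[s-b]]$. Base changing the inclusion $\cO_s(0,1^-)\hookrightarrow\cO_t(0,1^-)$ along $\cO_s(0,1^-)\hookrightarrow k[[s-b]]$, the factorization of $P$ and the Chinese remainder theorem give a ring isomorphism $\cO_t(0,1^-)\otimes_{\cO_s(0,1^-)}k[[s-b]]\iso\prod_{i=1}^d k[[s-b]]$ carrying the class of $t$ to $\bigl(u_{a_1}(s),\dots,u_{a_d}(s)\bigr)$; expressed in the basis $\{1,t,\dots,t^{d-1}\}$ on the source and the standard basis of the product on the target, this isomorphism is exactly multiplication by the Vandermonde matrix $\bU(s)$, so the inverse passage --- from the values of a function on the $d$ branches back to its coordinates in the powers-of-$t$ basis --- is multiplication by $\bV(s)=\bU(s)^{-1}$. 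Tensoring with $M$ over $\cO_t(0,1^-)$, and using that $f(u_{a_i}(s))=s$ forces $u_{a_i}'(s)=1/f'(u_{a_i}(s))$, one checks that the operator $D_s=\tfrac{1}{f'(t)}D_t$ becomes, on the $i$-th factor, precisely the pullback of the connection $D_t$ of $(M,D_t)$ along the change of coordinate $t=u_{a_i}(s)$; hence
$$
(M_\vphi,D_s)\otimes_{\cO_s(0,1^-)}k[[s-b]]\;\cong\;\bigoplus_{i=1}^d u_{a_i}^{*}(M,D_t).
$$
I expect this to be the main obstacle: one must verify carefully that the Leibniz rule survives this base change and that the coefficient $1/f'$ matches the Jacobian of each local inverse branch on the nose, so that the summands on the right are genuinely copies of $(M,D_t)$ in the coordinate $u_{a_i}(s)$.

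Finally I would read off the solutions. The $p$-adic Cauchy theorem gives $\dim_k\cS_b(M_\vphi,D_s)=\fr d$, while the displayed decomposition identifies $\cS_b(M_\vphi,D_s)$ with $\bigoplus_{i=1}^d u_{a_i}^{*}\bigl(\cS_{a_i}(M,D_t)\bigr)$ --- a horizontal section of $M_\vphi$ converging near $b$ restricts on the $i$-th branch to a horizontal section of $(M,D_t)$ near $a_i$ with positive radius of convergence and conversely, all series in sight converging on a common small disc around $b$. A $k$-basis of the right-hand side is given, for $i=1,\dots,d$ and $j=1,\dots,\fr$, by $\vec Y_{a_i,j}\bigl(u_{a_i}(s)\bigr)$, which on the $i$-th branch has coordinate vector the $j$-th column of $\mathbf Y_i\bigl(u_{a_i}(s)\bigr)$ in $e_1,\dots,e_\fr$ and vanishes on the other branches. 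Re-expressing such a section in the fixed basis $e_1,te_1,\dots,t^{d-1}e_\fr$ of $M_\vphi$ amounts, within each $e_m$-block of $d$ coordinates, to applying the change-of-coordinates matrix $\bV(s)$ to the $d$-vector of branch values of that component; collecting all $\fr d$ resulting coordinate vectors as the columns of one matrix, with the blocks ordered as in the chosen basis of $M_\vphi$, produces exactly $\bigl(\bigoplus_{i=1}^\fr\bV(s)\bigr)\bigl(\bigoplus_{i=1}^d\mathbf Y_i(u_{a_i}(s))\bigr)$. This is a product of invertible matrices, hence invertible, so its columns are $\fr d$ linearly independent horizontal elements with positive radius of convergence, i.e.\ a basis of $\cS_b(M_\vphi,D_s)$. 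For the trivial differential module one has $\fr=1$ and $\mathbf Y_i(t)=(1)$, so the second factor is the $d\times d$ identity and the matrix reduces to $\bV(s)$, giving the last assertion.
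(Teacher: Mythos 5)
Your proposal is correct and follows essentially the same route as the paper: split $\vphi$ over a small neighborhood of $b$, identify $(M_\vphi,D_s)$ there with the direct sum of the pullbacks of $(M,D_t)$ along the branches $t=u_{a_i}(s)$ (the paper's Corollary \ref{cor: iso diff}), and convert branch-wise horizontal sections into the basis $e_1,te_1,\dots,t^{d-1}e_\fr$ by the Vandermonde matrix $\bV(s)$ (the paper's Lemma \ref{lem: new coord}). The only cosmetic difference is that you phrase the splitting via the factorization of $P(s,X)$ and the Chinese remainder theorem over $k[[s-b]]$, whereas the paper uses sections of $\vphi$ over small discs; the content is the same.
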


On the geometric side, we consider discs $\sD_t^-$ and $\sD_s^-$ to be in the sense of Berkovich $k$-analytic geometry and assume some familiarity with the structure and classification of points in the Berkovich projective line $\P^1_k$ as for example presented in \cite[Section 3.6.]{BerCoh} (although it is not indispensable, using the language of Berkovich geometry does quite simplify the discussion and it is a standard in the more recent literature on $p$-adic differential equations). For $a\in \sD_t^-$ and $r\in [0,1)$, let $\eta_{a,r}$ denote the maximal point of the closed (Berkovich) disc $\sD_t(a,r)$ with center at $a$ and of radius $r$. For each $a'\in \sD_t^-$ let $l_{\vphi,a'}$ denote the set of the points $\eta_{a',r}$, $r\in [0,1)$ and put $\T_{\vphi,b}:=\cup_{i=1}^dl_{\vphi,a_i}$. Then, $T_{\vphi,b}$ has a natural structure of a topological graph. We say that $\eta\in \T_{\vphi,b}$ is branching if $\T_{\vphi,b}\setminus\{\eta\}$ has more than $2$ connected components and we denote the set of branching points by $\cB_{\vphi,b}$. A branch $\eta\in \cB_{\vphi,b}$ is any open disc $\sD\subset \sD_t^-$ which has a nonempty intersection with $\T_{\vphi,b}$ and has $\eta$ for the maximal point. We put $\delta(\eta)$ to denote the number of branches at $\eta$.  

Now, for each  $\vec{Y}_{a_i,j}(t)$ as before, let $r_{i,j}$ be its radius of convergence at $a_i$. We say that $\big(\vec{Y}_{a_i,j},\sD_t(a_i,r_{i,j}^-)\big)$ is a fundamental pair and denote the set of different fundamental pairs by $\sP$ (it depends on the chosen basis at $a_i$). Let us fix a fundamental pair $\cP:=(\vec{Y}(t),\cU)$, and put $\cB_{\vphi,b}(\cP):=\cB_{\vphi,b}\cap \cU$. Further, for each $\eta\in \cB_{\vphi,b}(\cP)$ let us choose any $\delta(\eta)-1$ branches at $\eta$, and denote their set, together with $\cU$ by $\sU_\cP$. Finally, for $\cP=(\vec{Y}(t),\cV)\in \sP$ and $\cU\in \sU_\cP$, let  $v_{\cP,\cU,s}$ denote the column vector of dimension $\fr\cdot d$ of analytic functions in $k[[s-b]]$ defined in the following way: The $\fr$ entries of $v_{\cP,\cU,s}$ in positions $(i-1)\cdot \fr+1,\dots,i\cdot \fr$ are the $\fr$ entries of $\vec{Y}(u_{a_i}(s))$, if $a_i\in \cU$, and 0 otherwise, where $i=1,\dots,d$. Then, we have the following (Theorem \ref{thm: main}; see also Definition \ref{defn: linked} for the notion of linked bases)
\begin{thm}
 \begin{enumerate}
  \item The vector of analytic functions $\Big(\bigoplus_{i=1}^\fr\bV(s)\Big)\,v_{\cP,\cU,s}$ is a solution of the system \eqref{eq: intro 2}. Its radius of convergence is equal to the radius of the disc $\vphi(\cU)$. 
  
  \item Suppose that the basis $\{\vec{Y}_{a_i,1}(s),\dots,\vec{Y}_{a_i,\fr}(t)\}$ is optimal, $i=1,\dots,d$, and that they are linked. Then, an optimal basis of $(M_\vphi,D_s)$ at $b$ is given by $\Big(\bigoplus_{i=1}^\fr\bV(s)\Big)\,v_{\cP,\cU,s}$, $\cU\in\sU_\cP$, $\cP\in \sP$.
 \end{enumerate}
\end{thm}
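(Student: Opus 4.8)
The plan is to deduce both assertions from Theorem \ref{thm: fund sol} and Corollary \ref{cor: sol trivial} together with the combinatorics of the tree $\T_{\vphi,b}$. The first move is to recast Theorem \ref{thm: fund sol} as a statement about an explicit isomorphism: let $\Psi$ be the $k$-linear map that sends a tuple $(\vec{h}_1,\dots,\vec{h}_d)$ of solution germs of $(M,D_t)$ at $a_1,\dots,a_d$ to $\big(\bigoplus_{i=1}^\fr\bV(s)\big)$ applied — after the reindexing of the $\fr d$ coordinates implicit in Theorem \ref{thm: fund sol} — to the vector whose $i$-th block of size $\fr$ is $\vec{h}_i(u_{a_i}(s))$. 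Since the tuples with a single nonzero entry $\vec{Y}_{a_i,j}$ span the source, which has dimension $\sum_i\dim_k\cS_{a_i}(M,D_t)=\fr d$, and since by Theorem \ref{thm: fund sol} the corresponding $\Psi$-values form a basis of $\cS_b\big((M)_\vphi,D_s\big)$, the map $\Psi$ is a $k$-linear bijection onto $\cS_b\big((M)_\vphi,D_s\big)$. Part (1), first assertion, is now immediate: given $\cP=(\vec{Y}(t),\cV)\in\sP$ and $\cU\in\sU_\cP$ one has $\cU\subseteq\cV$, so $\vec{Y}$ converges at every $a_i\in\cU$, its Taylor germ there is a solution germ of $(M,D_t)$, and $\vec{Y}(u_{a_i}(s))$ is exactly that germ composed with $u_{a_i}$; choosing $\vec{h}_i$ to be this germ for $a_i\in\cU$ and $\vec{h}_i=0$ otherwise gives $\Psi(\vec{h}_1,\dots,\vec{h}_d)=\big(\bigoplus_{i=1}^\fr\bV(s)\big)v_{\cP,\cU,s}$, which therefore solves \eqref{eq: intro 2}.

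For the radius in part (1), set $\vec{Z}=\big(\bigoplus_{i=1}^\fr\bV(s)\big)v_{\cP,\cU,s}$ and let $m_{\cP,\cU}\in(M)_\vphi$ be the horizontal element it represents. Identifying sections of $(M)_\vphi$ over $\sD_s(b,r)$ with sections of $M$ over $\vphi^{-1}(\sD_s(b,r))$, the element $m_{\cP,\cU}$ is the section which on a connected component of $\vphi^{-1}(\sD_s(b,r))$ equals the horizontal section determined by $\vec{Y}$ if the preimages of $b$ on that component lie in $\cU$, and equals $0$ otherwise; since $\vec{Y}\neq0$, this is a well-defined section over $\sD_s(b,r)$ precisely when no component carries preimages of $b$ both inside and outside $\cU$ and $\vec{Y}$ extends over every component contained in $\cU$. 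I would then invoke the description — from \cite{BojPoi} — of how these components grow and merge over $b$ as $r$ increases, the merges occurring exactly at the radii of the branching points of $\T_{\vphi,b}$, to conclude that both conditions hold for $r$ up to the radius of $\vphi(\cU)$ and fail just beyond it: for $\cU=\cV$ the relevant component leaves $\cV$ and $\vec{Y}$ stops converging, while for $\cU$ a chosen branch at a point $\eta\in\cB_{\vphi,b}(\cP)$ the component absorbs at $\eta$ a preimage of $b$ lying outside $\cU$. Hence $\cR_b(\vec{Z})$ is the radius of $\vphi(\cU)$. This comparison between the growth of the fibres of $\vphi$ and the metric graph $\T_{\vphi,b}$ is the only genuinely non-formal point in the argument and is precisely the geometric input behind the radius formula of \cite{BojPoi}; I would quote it rather than reprove it.

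For part (2) I first organise the bookkeeping. By optimality the convergence discs $\cV_{i,j}=\sD_t(a_i,r_{i,j}^-)$ are totally ordered at each $a_i$, and linkedness (Definition \ref{defn: linked}) patches them, as $(i,j)$ varies, into one nested family of discs, each disc $\cV$ carrying a multiplicity $\theta_\cV$ that does not depend on which $a_i\in\cV$ is used to read it off and satisfies $\sum_{\cV\ni a_i}\theta_\cV=\fr$; moreover it makes the distinct fundamental pairs correspond to pairs (such a disc $\cV$, one of the $\theta_\cV$ solutions living on $\cV$), so that $|\sU_\cP|$ depends only on $\cV_\cP$. As $\T_{\vphi,b}\cap\cV$ is a finite tree with leaf set $\vphi^{-1}(b)\cap\cV$ and with branching vertices the points of $\cB_{\vphi,b}\cap\cV$, each having $\delta(\eta)$ descending branches, the leaf-counting identity gives $|\sU_\cP|=1+\sum_{\eta\in\cB_{\vphi,b}\cap\cV}(\delta(\eta)-1)=|\vphi^{-1}(b)\cap\cV|$, and summing over $\sP$ one gets $\sum_{\cP\in\sP}|\sU_\cP|=\sum_{\cV}\theta_\cV\,|\vphi^{-1}(b)\cap\cV|=\sum_{i=1}^d\sum_{\cV\ni a_i}\theta_\cV=\fr d=\dim_k\cS_b\big((M)_\vphi,D_s\big)$. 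For linear independence, by injectivity of $\Psi$ a relation $\sum_{\cP,\cU}\lambda_{\cP,\cU}\big(\bigoplus_{i=1}^\fr\bV(s)\big)v_{\cP,\cU,s}=0$ is equivalent to: for each $i$, $\sum_{\cP,\cU:\,a_i\in\cU}\lambda_{\cP,\cU}\cdot\big(\text{germ of }\vec{Y}^\cP\text{ at }a_i\big)=0$ in $\cS_{a_i}(M,D_t)$. By linkedness this germ is a basis germ $\vec{Y}_{a_i,j}$, realised by a unique $\cP\in\sP$, so the relation decomposes, for each $\cP$ and each $a_i\in\vphi^{-1}(b)\cap\cV_\cP$, into the scalar equation $\sum_{\cU\in\sU_\cP:\,a_i\in\cU}\lambda_{\cP,\cU}=0$. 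Fix $\cP$: the discs of $\sU_\cP$ containing a given $a_i$ form a chain, so $\sigma(a_i):=$ their smallest element is well-defined, and because at each $\eta\in\cB_{\vphi,b}\cap\cV_\cP$ all but one descending branch was kept, $\sigma$ is a bijection onto $\sU_\cP$ — if $\sigma(a_i)=\sigma(a_{i'})$ with $i\neq i'$, descending along $\T_{\vphi,b}$ until $a_i$ and $a_{i'}$ separate produces, at the separation point, a kept branch giving a strictly smaller disc around one of them, a contradiction. Ordering preimages and discs compatibly with $\sigma$ and with inclusion makes the resulting square system triangular with unit diagonal, so all $\lambda_{\cP,\cU}$ vanish.

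It remains to check optimality. The family being a basis, it is optimal iff for every $r\in(0,1)$ the number of its members of radius $>r$ is at least $\dim_k\big(\cS_b\big((M)_\vphi,D_s\big)\cap\bigcup_{\rho>r}\cO_s(b,\rho^-)^{\fr d}\big)$, the reverse inequality being automatic. Identifying once more sections of $(M)_\vphi$ with sections of $M$ over $\vphi$-preimages and using the linked optimal bases at the $a_i$, this dimension equals $\sum_W\#\{(i,j):a_i\in W,\ \cV_{i,j}\supseteq W\}$, the sum over connected components $W$ of $\vphi^{-1}(\sD_s(b,r))$, whereas by part (1) the number of members of radius $>r$ counts the pairs $(\cP,\cU)$ with $\vphi(\cU)$ strictly containing $\sD_s(b,r)$; I would match the two counts by the same leaf-counting argument applied to the subtrees $\T_{\vphi,b}\cap\cV_\cP$ truncated at the radius of $r$ — or, the route I would actually take, simply quote the multiradius formula of \cite{BojPoi} and verify that the multiset $\{\,\text{radius of }\vphi(\cU):\cU\in\sU_\cP,\ \cP\in\sP\,\}$ reproduces $\cR_b\big((M)_\vphi,D_s\big)$, which together with the independence already shown forces the family to be an optimal basis. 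To summarise, the backbone is the isomorphism $\Psi$ of Theorem \ref{thm: fund sol} plus the tree combinatorics of $\T_{\vphi,b}$; the one step needing real work — and the one I would lean on \cite{BojPoi} for — is the identification of $\cR_b(\vec{Z})$ with the radius of $\vphi(\cU)$ in part (1).
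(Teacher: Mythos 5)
Part (1) and the ``basis'' half of part (2) are essentially sound. Your map $\Psi$ is exactly what Corollary \ref{cor: iso diff} and Lemma \ref{lem: new coord} provide, and your radius argument is in substance the paper's use of Lemma \ref{lem: radius crit}: the only geometric input (components of $\vphi^{-1}\big(\sD_s(b,\rho^-)\big)$ are discs around the preimages that merge exactly at the branching radii of $\T_{\vphi,b}$) is already available from Section 2, so there is no need to quote \cite{BojPoi} for it. Your linear-independence argument --- the bijection $\sigma$ from preimages in $\cV_\cP$ to $\sU_\cP$ and the resulting unitriangular incidence matrix --- is a genuine and correct alternative to the paper's route, which instead proves spanning via Lemma \ref{lem: help 2} and gets the cardinality from Lemma \ref{lem: sum of branches}; both give that the family is a basis.

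The gap is in the optimality step, which is the heart of part (2). Your filtration criterion is fine, but the equality of counts it requires is never established. The dimension formula you write, $\sum_W\#\{(i,j):a_i\in W,\ \cV_{i,j}\supseteq W\}$, overcounts: when a component $W$ contains several preimages $a_i$, the same horizontal element on $W$ is counted once for each of them (by linkedness it sits in each local optimal basis), so the correct count is the number of fundamental pairs $\cP\in\sP$ with $\cV_\cP\supseteq W$; matching that with $\#\{(\cP,\cU):\vphi(\cU)\text{ of radius}>r\}$ then needs Lemma \ref{lem: sum of branches} applied pair by pair at level $r$, and you only gesture at this. The route you say you would actually take --- quoting the multiradius formula of \cite{BojPoi} and ``verifying'' that the multiset of radii of the $\vphi(\cU)$ reproduces $\cR_b\big((M_\vphi,D_s)\big)$ --- leaves precisely that verification undone and outsources the main content: Theorem \ref{thm: main} is presented as a refinement of \cite[Theorem 3.6.]{BojPoi}, and the paper's proof is self-contained, arguing via Lemma \ref{lem: optimal criterion}\emph{(3)}: an arbitrary horizontal element of radius $r$ is split as $\vec{Z}_1+\vec{Z}_2$ with $\cR_b(\vec{Z}_1)=r$ (Claim 1) and $\cR_b(\vec{Z}_2)>r$ (Claim 2) --- this is where linkedness does delicate work --- followed by induction on the discrete set of radii and Lemma \ref{lem: help 2}. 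Either execute your counting argument with the corrected count, or give an argument of the paper's type; as written, optimality is asserted rather than proved.
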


\section{Finite morphisms of open discs}

\subsection{Notation}

\subsubsection{}
Throughout this article $k$ will denote an algebraically closed and complete valued extension of $\Q_p$. 
We further denote by:
\begin{itemize}
\item $\sD_t(a,r^-)$ (resp. $\sD_t(a,r)$) an open (resp. closed) (Berkovich) disc centered at $a\in k$ and of radius $r$ with respect to coordinate $t$. 

In general we will write $\sD_t(a,r^\pm)$ for a disc centered at $a$ and of radius $r$ when we do not want to specify if it is open or closed. Further, we will write $\sD_t$ (resp. $\sD_t^-$) for a closed (resp. open) unit disc.

\item $\cO_t(a,r^-)$ (resp. $\cO_t(a,r)$) the ring of analytic functions on disc $\sD_t(a,r^-)$ (resp. $\sD_t(a,r)$). These are functions $f(t)=\sum_{i \geq 0}a_i\cdot t^i\in k[[t]]$ such that 
$$
\lim\limits_{i\to\infty}|a_i|\cdot\rho^i\to 0,\quad \text{for all}\quad \rho\in [0,r)\quad \text{resp. } \rho\in [0,r].
$$
We will write $\cO_t^\pm$ for the ring of analytic functions on unit disc $\sD_t^\pm$.

\item Given $f(t)=\sum_{i\geq 0}a_i\cdot t^i\in\cO_t(r^-)$, we denote by $v(f(t),\cdot):(-\log r,\infty)\to \R$ the valuation polygon of $f$, which we recall is given by 
$$
\lambda\mapsto\inf\{-\log|a_i|+i\cdot \lambda\}.
$$
It is a continuous, piece-wise linear and concave ($\cap$-shaped) function.
\end{itemize}
If no confusion arises, we may drop writing $t$ in the index. 

\begin{defn}\label{def: radius}
 Let $a\in k$ and let $f(t)\in k[[t-a]]$. We define the radius of convergence of $f(t)$ at $a$ and denote it by $\cR_a\big(f(t)\big)$ as 
 $$
 \cR_a\big(f(t)\big):=\sup\{r\in [0,1)\mid f\in \cO_t(a,r^-)\}=\min\{1,\sup\{r\mid f(t)\in \cO_t(a,r^-)\}\}.
 $$
 
 If $\vec{f}(t)\in k[[t-a]]^n$ is a vector of power series, we define $\cR_a\big(\vec{f}(t)\big)$ as the minimum of radii of convergence of its components.
\end{defn}
We note that in general $\cR_a\big(f(t)\big)$ is different from the {\em true} radius of convergence of a power-series as soon as the latter is bigger than 1. We also note that if $r=\cR_a\big(f(t)\big)>0$ then $f(t)\in \cO_t(a,r^-)$ but it does not have to hold $f(t)\in\cO_t(a,r)$.

\subsubsection{} We briefly recall that all the points of a disc $\sD_t^\pm$ are classified in 4 types, according to their residue field (see for example \cite[Section 3.6.]{BerCoh} for details). For our purposes, we will only work with points of type 2 and 3 along with rational points (points in the base field $k$). 

We recall that if $\eta\in \sD_t^\pm$ is a point of type 2 or 3, then there exists a closed disc $\sD_t(a,r)\subseteq \sD_t^\pm$ such that $\eta$ is the Shilov (maximal) point of $\sD_t(a,r)$. Although point $a$ is not uniquely determined in this way, the radius $r$ is and we say that $r$ is the $t$-radius of the point $\eta$ and write $r_t(\eta)=r$ and $\eta_{a,r}:=\eta$, with convention that $\eta_{a,0}:=a$.

If $a\in \sD_t^-$ (resp. $\sD_t$) we denote by $l_{t,a}$ the set of points of the form $\eta_{a,r}$, where $r\in [0,1)$ (resp. $r\in [0,1]$) with inherited topology from $\sD_t$. 

\subsection{Setting}

\subsubsection{} Let $\vphi:\sD^\pm_t\to \sD^\pm_s$ be a finite morphism of unit discs of degree $d$. Note that the discs are then simultaneously open or closed (a fact that will be used in what follows without explicitly mentioning it). Then, $\vphi$ comes with a morphism $\vphi^{\#}:\cO^\pm_s\to \cO^\pm_t$ which makes $\cO^\pm_t$ a finite $\cO^\pm_s$-module. The image of $s$ generates the morphism and if we identify $s$ with $\vphi^\#(s)$ we obtain $(s,t)$-coordinate representation of $\vphi$ in the form:
\begin{equation}\label{eq: coord}
s=f(t)=\sum_{i\geq 0} f_i\cdot t^i. 
\end{equation}
Then, we have:
\begin{enumerate}
\item The point $a\in \sD_t(k)$ is mapped to a point $f(a)\in \sD_s(k)$.  
\item For each disc $\sD_t(a,r^\pm)\subset \sD_t$ we have $\vphi\big(\sD_t(a,r^\pm)\big)=\sD_s(f(a),r'^\pm)$. More precisely, the radius $r'$ is given by 
$$
-\log r'=v(f(t)-f(a),-\log r).
$$
Furthermore,  the restriction $\vphi_{|\sD_t(a,r^\pm)}$ is a finite morphism of closed (resp. open) discs. Then, if we choose coordinates $t-a$ and $s-f(a)$ on $\sD_t(a,r^\pm)$ and $\sD_s(f(a),r'^\pm)$, respectively, we obtain coordinate representation for $\vphi_{\sD_t(a,r^\pm)}$ in the form
\begin{equation}\label{eq: coord restr}
s-f(a)=f_a(t):=\sum_{j\geq 1} f_{j,a}\cdot(t-a)^j,
\end{equation}
which is nothing but the Taylor expansion of $f(t)$ from \eqref{eq: coord} at point $a$. Finally, the degree of $\vphi_{|\sD_t(a,r^\pm)}$ is equal to the right slope of $v(f(t)-f(a), \cdot )$ at $-\log r$ if the discs are open and to the left slope at $-\log r$ if $r<1$ and the discs are closed.

We note when the disc $\sD_t^-$ is open, the highest slope of $v(f(t),\dot)$ is then equal to the degree of the morphism $d$.

\item For any $\sD_s(b,r^\pm)\subset\sD^\pm_s$ the inverse image $\vphi^{-1}\big(\sD_s(b,r^\pm)\big)=\cup_{i=1}^n\sD_t(a_i,r_i^\pm)$ where the latter is a disjoint union of discs and $f(a_i)=b$.

In particular, for each $b\in \sD^\pm_s(k)$, the cardinality of $\vphi^{-1}(a)$, counting multiplicities, is $d$. 
 \end{enumerate}
 
\subsubsection{}\label{subs: poly} On the other side, it follows that there is some natural number $m$ such that 
$$
a_0(s)+a_1(s)\cdot t+\dots a_m(s)\cdot t^m=0, \quad a_i(s)\in \cO^\pm_s. 
$$
The minimal such $m$ is equal to $d$. Then $a_d(s)$ is invertible in $\cO^\pm_s$. Indeed, if $a\in \sD_s^\pm(k)$,  $a_d(a)=0$ would imply that $\vphi^{-1}(a)$ has less or equal than $d-1$ preimages in $\sD_t^\pm(k)$ which contradicts the previous point. So $a_d(a)\neq 0$ and $a_d(s)$ is invertible in $\cO_s^\pm$. Hence, there exists a unique monic polynomial $P(s,X)=a_0(s)+\dots+a_{d-1}(s)\cdot X^{d-1}+X^d\in \cO^\pm_{s}[X]$ of degree $d$ such that $P(s,t)=0$.

The point (3) above then translates in the following way. For each $b\in \sD^\pm_s(k)$ and $r\in (0,1)$, we have a natural embedding $\cO^\pm_s\hookrightarrow\cO_s(b,r^\pm)$. Then $\vphi^{-1}\big(\sD_s(b,r^\pm)\big)=\cup_{i=1}^n\sD_t(a_i,r_i^\pm)$ corresponds to a factorization of $P(s,X)\in \cO_s(b,r^\pm)[X]$ into $n$ irreducible monic polynomials $P_i(s,X)$, where $P_i(s,X)$ corresponds to the restriction $\vphi_{|\sD_t(a_i',r_i^\pm)}:\sD_t(a_i',r_i^\pm)\to \sD_s(b,r^\pm)$ and its zero is the image of $t=a_i'+(t-a_i')$ in $\cO_t(a_i',r_i^\pm)$ and where $a_1',\dots,a_n'\in \vphi^{-1}(b)$. Once we put $a=a_i$, \eqref{eq: coord restr} gives relation between images of $s$ and $t$ in $\cO_s(b,r^\pm)$ and $\cO_t(a_i,r_i^\pm)$, respectively.

 We have the following commutative diagrams
 
\begin{diagram}[H]
\centering
\begin{minipage}{0.45\textwidth}
\centering
\begin{tikzcd}
  \sD_t^\pm\arrow{r}{\vphi}  & \sD_s^\pm\\
  U:=\bigcup_{i=1}^n\sD_t(a_i,r_i^\pm)\arrow[u, hook] \arrow{r}{\vphi_{|U}}& \sD_s(b,r^\pm)\arrow[u,hook]
\end{tikzcd}
\caption{}
\label{dia: d1}
 \end{minipage}\hfill
 \begin{minipage}{0.45\textwidth}
 \centering
 \begin{tikzcd}
  \cO_t\pm \arrow[d] \arrow[r,hookleftarrow, "\vphi^\#"] & \cO_s^\pm \arrow[d]\\
  \bigoplus_{i=1}^n\cO_t(a_i',r_i^\pm) \arrow[r,leftarrow, "\vphi_{|U}^\#"]& \cO_s(b,r^\pm)
\end{tikzcd}
\caption{}
\label{dia: d2}
\end{minipage}\hfill
 \end{diagram} 
 where in Diagram \ref{dia: d1} the vertical arrows are natural inclusions while in Diagram \ref{dia: d2} the vertical arrows are restrictions.

\subsection{Tree over a point}\hfill

\begin{defn}
 Let $b\in \sD_s^\pm(k)$. The set $\cup_{a\in \vphi^{-1}(b)}l_{t,a}=\vphi^{-1}(l_{s,b})$ (with inherited topology from $\sD_t$) is called  $\vphi${\emph-tree over $b$} and we denote it by $\T_{\vphi,b}$.
 
 A point $y\in \T_{\vphi,b}$ is called branching if $\T_{\vphi,b}\setminus\{y\}$ has more than two connected components. We denote the set of branching points by $\cB_{\vphi,b}$.
 
 Let $\eta\in \cB_{\vphi,b}$. We call any open disc in $\sD\subseteq \sD_t^\pm$ a branch at $\eta$ if $\sD$ is a connected component of $\sD_t^\pm\setminus \{\eta\}$ and $\T_{\vphi,b}\cap \sD$ is nonempty. We denote by $\delta(\eta)$ the cardinality of the set of branches at $\eta$. 
\end{defn}

It is easy to see that $\eta\in \sD_t^\pm(k)$ is a branching point in $\T_{\vphi,b}$ if there exists $a_1,a_2\in \vphi^{-1}(b)$, $a_1\neq a_2$ such that $\eta=\eta_{a_1,r}=\eta_{a_2,r}$, for some $r\in (0,1]$. The set of branching points is finite.

\begin{defn}\label{defn: branching radii}
 We say that $r\in (0,1]$ is branching radius if there is a branching point $\eta\in T_{\vphi,b}$ such that $r_s\big(\vphi(\eta)\big)=r$. We also say that $r$ is the branching radius of $\eta$, and we denote by $\cB_{\vphi,b}(r)$ all the branching points of $\T_{\vphi,b}$ whose branching radius is $r$.
 
\end{defn}

In fact, if $\sD$ is any disc attached to the branching point $\eta$, then $r_s\big(\vphi(\eta)\big)$ is the radius of the disc $\vphi(\sD)$.

\begin{lemma}\label{lem: sum of branches}
Let $\sD\subseteq \sD_t^\pm$ be a disc. Then, 
$$
\sum_{\eta\in \cB_{\vphi,b}\cap \sD}(\delta(\eta)-1)+1=\#\big(\sD\cap \vphi^{-1}(b)\big).
$$
\end{lemma}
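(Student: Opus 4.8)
The plan is to prove the identity by induction on $m:=\#\big(\sD\cap\vphi^{-1}(b)\big)$, which I will assume throughout to be $\ge 1$ (this is the only case in which the statement is meaningful: if $\sD$ contains no preimage of $b$ then it is disjoint from $\T_{\vphi,b}$ altogether). The one geometric ingredient I would isolate first is a \emph{localization} statement: if $\sD=\sD_t(c,\rho^-)$ and a point $\eta\in\sD$ lies on $\T_{\vphi,b}$, then $\eta$ lies on $l_{t,a'}$ for some $a'\in\vphi^{-1}(b)$ with $a'\in\sD$. This is immediate from the ultrametric inequality together with the description of discs and of the segments $l_{t,a}$: writing $\eta=\eta_{a'',r}$ with $a''\in\vphi^{-1}(b)$, the conditions $\eta\in\sD$ force $r<\rho$ and $|a''-c|<\rho$, hence $|a'-c|<\rho$ for any $a'$ with $|a'-a''|\le r$. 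Two consequences I would record: $\T_{\vphi,b}\cap\sD=\bigcup_{a'\in\vphi^{-1}(b)\cap\sD}(l_{t,a'}\cap\sD)$ (so it is nonempty iff $m\ge1$), and every branching point of $\T_{\vphi,b}$ lying in $\sD$ has all of its branches meeting $\T_{\vphi,b}$ contained in $\sD$; since $\delta(\cdot)$ is defined without reference to $\sD$, all of these data are stable under passing to sub-discs.

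For the base case $m=1$, with $a'$ the unique preimage in $\sD$, the set $\T_{\vphi,b}\cap\sD=l_{t,a'}\cap\sD$ is a half-open segment and carries no branching point of $\T_{\vphi,b}$ (by the localization statement such a point would be a separation point $\eta_{a',r}=\eta_{a'',r}$ of two segments with $a'',a'\in\sD$, forcing $m\ge2$). So $\cB_{\vphi,b}\cap\sD=\emptyset$ and both sides of the identity equal $1$.

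For the inductive step, assuming $m\ge2$ and the formula for all discs with fewer preimages, I would set $r_0:=\max\{|a'-a''|:a',a''\in\vphi^{-1}(b)\cap\sD\}$ — attained, with $0<r_0<\rho$ — fix a preimage $a'_0\in\sD$, and put $\eta_0:=\eta_{a'_0,r_0}\in\sD$. Two preimages realizing $r_0$ lie in distinct branches (open residue discs) of $\eta_0$, so $\eta_0$ has at least two branches meeting $\T_{\vphi,b}$, and since $\T_{\vphi,b}$ also extends above $\eta_0$ the point $\eta_0$ is a branching point; thus $\eta_0\in\cB_{\vphi,b}\cap\sD$ and $\delta_0:=\delta(\eta_0)\ge2$. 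Let $\sD_1,\dots,\sD_{\delta_0}$ be these branches (each contained in $\sD$) and $U_0$ the remaining, ``upward'', component of $\sD\setminus\{\eta_0\}$. Since every preimage in $\sD$ is within distance $r_0$ of $a'_0$, none lies in $U_0$; in fact $U_0\cap\T_{\vphi,b}=\{\eta_{a'_0,r}:r_0<r<\rho\}$ carries no branching point. Hence each preimage in $\sD$ lies in exactly one $\sD_j$, giving $\sum_{j=1}^{\delta_0}m_j=m$ with $m_j:=\#(\sD_j\cap\vphi^{-1}(b))$ and $1\le m_j<m$, and $\cB_{\vphi,b}\cap\sD$ is the disjoint union of $\{\eta_0\}$ with the sets $\cB_{\vphi,b}\cap\sD_j$. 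Applying the inductive hypothesis to each $\sD_j$ then yields
$$
\sum_{\eta\in\cB_{\vphi,b}\cap\sD}\big(\delta(\eta)-1\big)=(\delta_0-1)+\sum_{j=1}^{\delta_0}(m_j-1)=(\delta_0-1)+(m-\delta_0)=m-1,
$$
which is the claim.

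The part needing care — the ``hard part'', though it is soft — is exactly the localization statement and its corollary that, inside $\sD$, the combinatorics of $\T_{\vphi,b}$ depends only on the preimages lying in $\sD$: this is what makes the decomposition $\cB_{\vphi,b}\cap\sD=\{\eta_0\}\sqcup\bigsqcup_j(\cB_{\vphi,b}\cap\sD_j)$ exact and keeps the $\delta$-values unchanged under restriction. Everything else is routine. Conceptually, the identity is just the Euler/handshake relation $\sum_v(\mathrm{val}(v)-2)=-2$ for the compact finite tree obtained from $\T_{\vphi,b}\cap\sD$ by adjoining the maximal point $\xi$ of $\sD$: its leaves are the $m$ preimages together with $\xi$, and each branching vertex $\eta$ has valence $\delta(\eta)+1$, so $\sum_{\eta\in\cB_{\vphi,b}\cap\sD}\big((\delta(\eta)+1)-2\big)=(m+1)-2$; one could present the proof directly in that form instead of by induction.
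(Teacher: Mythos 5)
Your argument is correct, and it reaches the identity by a different mechanism than the paper. The paper first replaces $\sD$ by the smallest closed disc $\sD'$ containing $\sD\cap\vphi^{-1}(b)$ (asserting that neither side of the equality changes) and then applies Euler's formula $V-E=1$ to the finite tree $\T_{\vphi,b}\cap\sD'$ whose vertices are $(\cB_{\vphi,b}\cap\sD')\cup(\sD'\cap\vphi^{-1}(b))$ and which has $\sum_\eta\delta(\eta)$ edges; you instead induct on $\#\big(\sD\cap\vphi^{-1}(b)\big)$, splitting at the topmost separation point $\eta_0$ given by the maximal mutual distance of the preimages in $\sD$, and your closing handshake remark is in substance the paper's count. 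What your route buys is that the localization statement (a point of $\T_{\vphi,b}$ lying in $\sD$ lies on $l_{t,a'}$ for a preimage $a'\in\sD$, so branches and $\delta$-values seen inside $\sD$ are the global ones) is isolated and proved, whereas the paper uses exactly this fact implicitly when it says that shrinking $\sD$ ``changes none of the numbers''; what the paper's route buys is brevity. Two small caveats, neither a gap in your proof relative to the paper's. First, both arguments declare the topmost separation point branching ($\eta_0$ for you, $\eta_{a,r}$ for the paper); with the literal definition (more than two connected components of $\T_{\vphi,b}\setminus\{\eta\}$) this can fail in exactly one configuration, namely $\sD$ the closed unit disc with the Gauss point carrying exactly two branches, where the tree does not extend above $\eta_0$ -- and there the stated identity itself is off by one, so this is an artifact of the paper's definition (the intended convention is clearly that the maximal point counts as branching once it has at least two branches), shared equally by the paper's proof. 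Second, your localization is phrased for open $\sD$; for closed $\sD$ the same ultrametric inequalities hold with non-strict signs, and the only genuinely different feature of the closed case is the top point just mentioned -- relatedly, in your final handshake formulation, when the maximal point of a closed $\sD$ lies in the tree and is branching it should not be adjoined as an extra leaf and has valence $\delta(\eta)$ rather than $\delta(\eta)+1$; the two discrepancies cancel, so that alternative presentation still yields the lemma after this adjustment.
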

\begin{proof}
We note that if we substitute $\sD$ with a disc $\sD'\subseteq \sD$ such that $\sD\cap\vphi^{-1}=\sD'\cap \vphi^{-1}(b)$, then none of the numbers in equality change. 

In particular, we may substitute $\sD$ with the smallest closed disc $\sD'=\sD_t(a,r)$ in $\sD_t$ which contains points $\sD\cap \vphi^{-1}(b)$. If $r=0$, then $\sD'$ contains no branching points and $\#\big(\sD'\cap \vphi^{-1}(b)\big)=1$ and the equality holds. 
If $r>0$, then $\eta_{a,r}$ is in $\cB_{\vphi,b}$ and in this case we note that $\T_{\vphi,b}\cap \sD'$ has naturally a structure of a planar graph with set of vertices $V$ being the union of $\cB_{\vphi,b}\cap \sD'$ and $\sD'\cap \vphi^{-1}(b)$, and edges being the connected components of $(\T_{\vphi,b}\cap \sD)\setminus V$. Note that the number of edges is exactly $\sum_{\eta\in \cB_{\vphi,b}\cap \sD'}\delta(\eta)$.  Euler's formula then reads 
$$
\#\big(\sD'\cap\vphi^{-1}(b)\big)+\sum_{\eta\in \cB_{\vphi,b}\cap\sD'}1-\sum_{\eta\in \cB_{\vphi,b}\cap\sD'}\delta(\eta)+1=2,
$$
which implies the lemma.
\end{proof}

%
%

\subsection{Solutions}\hfill\\

\begin{defn}
 Let $b\in \sD^\pm_s(k)$. We say that $u(s)\in k[[s-b]]$ is a solution of $P(s,X)=0$ at $b$ if $P(s,u(s))=0$. 
\end{defn}

 We continue discussion in \ref{subs: poly}. Suppose that $b\in\sD_s^\pm$ is not branching for $\vphi$. Then, for a small enough disc $\sD:=\sD_s(b,r^-)$, we have  $\vphi^{-1}(\sD)=\cup_{i=1}^d\sD(a_i,r_i^-)$ where the union is disjoint and which corresponds to factorization of polynomial $P(s,X)$ into linear factors of the form $X-f_i(s)$, where $f_i(s)\in \cO_s(b,r^-)$, and which in turn corresponds to the restriction $\vphi_{|\sD_t(a_i,r_i^\pm)}:\sD_t(a_i,r_i^\pm)\to \sD$ (which is then an isomorphism). In particular, we may write $t=f_i(s)$. 
 
 Now, since $t=a_i$ is sent to $s=b$ we obtain that $f_{i,0}=a_i$, which gives $t=a_i+\sum_{j\geq 1}f_{i,j}\cdot(s-b)^j$, and the latter series has a non-zero radius of convergence.  Then, we proved 

\begin{lemma}\label{lem: solution}
 Suppose that $b\in \sD_s^\pm(k)$ is not a branching point for $\vphi$. Then, there are exactly $d$ solutions of $P(s,X)=0$ at $b$. 
 More precisely, for each $a\in \sD_s^\pm(k)$ which is a zero of $P(0,X)$ corresponds one solution of the form  $u_a(s)=a+\sum_{i\geq 1}u_{a,i}\cdot(s-b)^i$.
 
 Each solution has a non-zero radius of convergence.
\end{lemma}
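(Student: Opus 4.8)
The plan is to unwind the geometric picture of Lemma~\ref{lem: solution} into a purely algebraic statement about the polynomial $P(s,X)$ and then read off the count of solutions. First I would observe that the claim is local at $b$, so I may replace $\cO_s^\pm$ by the larger ring $\cO_s(b,r^\pm)$ for $r$ small, and work with $P(s,X)\in\cO_s(b,r^\pm)[X]$; the hypothesis that $b$ is not branching is exactly what is needed to guarantee, via point (3) of the Setting together with the non-branching condition, that for $r$ small enough the inverse image $\vphi^{-1}\big(\sD_s(b,r^\pm)\big)$ is a disjoint union of $d$ discs $\sD_t(a_i,r_i^\pm)$, each of which maps isomorphically onto $\sD_s(b,r^\pm)$ (each restriction has degree $1$ because the number of discs equals the total degree $d$ counted with multiplicity). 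This is the content already spelled out in the paragraph preceding the statement, so the first paragraph of the proof is really just a careful citation of that discussion.

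Next I would translate each isomorphism $\vphi_{|\sD_t(a_i,r_i^\pm)}\colon \sD_t(a_i,r_i^\pm)\iso \sD_s(b,r^\pm)$ into its coordinate form: choosing coordinates $s-b$ and $t-a_i$, the inverse isomorphism is given by an analytic function $t=f_i(s)\in\cO_s(b,r^\pm)$, and since $t=a_i\mapsto s=b$ the constant term is $f_{i,0}=a_i$, so $u_{a_i}(s):=f_i(s)=a_i+\sum_{j\ge 1}f_{i,j}(s-b)^j$ has nonzero radius of convergence at $b$. Because $\vphi^\#(s)=f(t)$ and $f_i$ is a section of $f$ over the disc, substituting $t=u_{a_i}(s)$ gives $f\big(u_{a_i}(s)\big)=s$, and then $P(s,u_{a_i}(s))=0$ follows from the characterization of $P$: $P(s,X)$ is the minimal monic polynomial over $\cO_s^\pm$ killing $t$, and over $\cO_s(b,r^\pm)$ it factors as $\prod_{i=1}^d\big(X-u_{a_i}(s)\big)$, this factorization being exactly the one attached to the decomposition $\vphi^{-1}(\sD)=\bigsqcup_i\sD_t(a_i,r_i^\pm)$ in the paragraph after Diagram~\ref{dia: d2}. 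Hence each $u_{a_i}(s)$ is a solution at $b$, the constant terms $u_{a_i}(b)=a_i$ are precisely the roots of $P(0,X)$ (equivalently the points of $\vphi^{-1}(b)$), and each has nonzero radius of convergence.

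Finally I would argue there are \emph{exactly} $d$ solutions and no more. Any solution $u(s)\in k[[s-b]]$ of $P(s,X)=0$ at $b$ has $u(b)$ a root of $P(b,X)$; since $P$ factors into the $d$ distinct linear factors $X-u_{a_i}(s)$ over $\cO_s(b,r^\pm)$ — distinct because the $a_i$ are pairwise distinct as $b$ is not branching — and $k[[s-b]]$ is an integral domain containing $\cO_s(b,r^\pm)$, the only possible solutions in $k[[s-b]]$ are the $u_{a_i}(s)$ themselves, one for each root $a$ of $P(0,X)$. I expect the only genuinely delicate point to be justifying that the degree of each restriction $\vphi_{|\sD_t(a_i,r_i^\pm)}$ is $1$ — i.e. that non-branching at $b$ forces the $d$ preimages to be \emph{simple}, so that there really are $d$ of them and each disc maps isomorphically; this is where the hypothesis is used essentially, and it can be extracted from the slope description of $v(f(t)-f(a),\cdot)$ in point (2) of the Setting together with the definition of branching point. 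Everything else is bookkeeping with Taylor expansions and the uniqueness of the monic polynomial $P$.
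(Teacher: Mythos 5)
Your proposal is correct and follows essentially the same route as the paper: the non-branching hypothesis gives, for small $r$, a decomposition of $\vphi^{-1}\big(\sD_s(b,r^-)\big)$ into $d$ discs each mapping isomorphically onto $\sD_s(b,r^-)$, and the corresponding factorization of $P(s,X)$ into linear factors $X-u_{a_i}(s)$ with $u_{a_i}(b)=a_i$ yields the $d$ convergent solutions. Your extra third paragraph (at most $d$ solutions, since a monic degree-$d$ polynomial splitting into distinct linear factors over the integral domain $k[[s-b]]$ has no other roots there) only makes explicit a point the paper leaves implicit.
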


\subsection{Sections}\hfill\\

The setting and notation remains as before.
\begin{defn}
 Let $U\subset \sD_s^\pm$ be a $k$-analytic subset. A section of $\vphi$ over $U$ is any morphism $\phi:U\to \vphi^{-1}(U)$ such that $\vphi\circ\phi=\id$. 
 
 A section of $\vphi$ at $b\in \sD_s^\pm(k)$ is any section of $\vphi$ over $U$, where $U$ is some open neighborhood of $b$.  
 
 Two sections at $b$ are equal if their restrictions coincide on some neighborhood of $b$.
\end{defn}

Suppose that $b\in \sD_s^\pm$ is not a branching point for $\vphi$ and let $r\in (0,1)$ be such that $\vphi^{-1}(\sD_s(b,r^-))$ has exactly $d$ connected components denoted by $\sD_t(a_i,r_i^-)$, $i=1,\dots,d$. Then, the restriction $\vphi_{|\sD_{t,i}}:\sD_t(a_i,r_i^-)\to \sD_s(b,r^-)$ is an isomorphism of open discs, and as we saw in the previous section, it has $(t-a_i,s-b)$-coordinate representation given by $t-a_i=\sum_{j\geq 1}u_{a_i,j}\cdot(s-b)^j$, coming from the solution $u_{a_i}(s)$ of $P(s,X)$ at $b$.  The latter function is then inversible in $\cO_s(b,r^-)$ and we may write $s-b=\sum_{j\geq 1}g_{i,j}\cdot(t-a_i)^j$ which is a coordinate representation of the inverse morphism $\phi_i:=\vphi_{|\sD_t(a_i,r_i^-)}^{-1}:\sD_s(b,r^-)\to \sD_s(a_i,r_i^-)$. 

In particular, morphism $\phi'_i:\sD_s(b,r^-)\to \vphi^{-1}\big(\sD_s(b,r^-)\big)$ induced by $\phi_i$ is a section of $\vphi$ over $\sD_s(b,r^-)$. At the level of functions, $\phi'$ is generated by $t_{|\sD_t(a_i,r_i^-)}\mapsto\sum_{j\geq 0}u_{a_i,j}(s-b)^j$ and $t_{|\sD_t(a_j,r_j^-)}\mapsto 0$, for $j\neq i$. 

In this way we proved
\begin{lemma}\label{lem: sections}
 Suppose that $b\in \sD_s^\pm(k)$ is not a branching point for $\vphi$. Then, there are exactly $d$ different sections of $\vphi$ at $b$. 
 
 More precisely, for each $a$ in $\vphi^{-1}(b)$, and $u_a(s)$ a solution at $b$ with $u_a(0)=a$, there is a section $\phi_a$ that is given by $t_{|\sD_t(a,r^-)}\mapsto u_a(s)$ and $t_{|U}\mapsto 0$, where $r\in (0,1)$ such that $\vphi_{|\sD_t(a,r)}$ is an isomorphism and $U$ is any connected component of $\vphi^{-1}\big(\vphi(\sD_t(a_i,r_i^-))\big)$ which is different from $\sD_t(a_i,r_i^-)$.
\end{lemma}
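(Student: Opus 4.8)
The plan is to extract the $d$ sections directly from the local inversion of $\vphi$ over a small disc around $b$ furnished by Lemma~\ref{lem: solution}, and then to argue that there can be no others: a section has a \emph{connected} source (a disc), but its target $\vphi^{-1}(V)$ is a disjoint union of several discs, so it must land entirely in one of them, which pins it down completely.

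First I would fix, using the discussion preceding Lemma~\ref{lem: solution} (non-branchingness of $b$, the structure of fibres, and the factorisation of $P(s,X)$), a radius $r\in(0,1)$ small enough that $\vphi^{-1}\big(\sD_s(b,r^-)\big)=\bigsqcup_{i=1}^d\sD_t(a_i,r_i^-)$ is a disjoint union of exactly $d$ open discs with $\vphi^{-1}(b)=\{a_1,\dots,a_d\}$, and such that each restriction $\vphi_i:=\vphi_{|\sD_t(a_i,r_i^-)}\colon\sD_t(a_i,r_i^-)\to\sD_s(b,r^-)$ is an isomorphism whose inverse is given in coordinates by $t=u_{a_i}(s)$, where $u_{a_i}$ is the solution of $P(s,X)=0$ at $b$ with constant coefficient $a_i$. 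For existence I would set $\phi_{a_i}$ to be the composite $\sD_s(b,r^-)\xrightarrow{\vphi_i^{-1}}\sD_t(a_i,r_i^-)\hookrightarrow\vphi^{-1}\big(\sD_s(b,r^-)\big)$; then $\vphi\circ\phi_{a_i}=\vphi_i\circ\vphi_i^{-1}=\id$, so $\phi_{a_i}$ is a section of $\vphi$ over $\sD_s(b,r^-)$, hence a section at $b$. Passing to coordinate rings, $\phi_{a_i}^{\#}\colon\bigoplus_{j}\cO_t(a_j,r_j^-)\to\cO_s(b,r^-)$ factors as the projection onto the $i$-th summand followed by $(\vphi_i^{\#})^{-1}$, i.e.\ it sends $t_{|\sD_t(a_i,r_i^-)}\mapsto u_{a_i}(s)$ and $t_{|\sD_t(a_j,r_j^-)}\mapsto 0$ for $j\neq i$; since $\vphi\big(\sD_t(a_i,r_i^-)\big)=\sD_s(b,r^-)$ and the components of its preimage other than $\sD_t(a_i,r_i^-)$ are exactly the $\sD_t(a_j,r_j^-)$ with $j\neq i$, this is the description claimed. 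Finally the $\phi_{a_i}$ are pairwise distinct because $\phi_{a_i}(b)=a_i$ and the $a_i$ are distinct.

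To see that there are \emph{no other} sections I would take an arbitrary section $\phi\colon V\to\vphi^{-1}(V)$ at $b$ and shrink $V$ so that $V=\sD_s(b,r'^-)\subseteq\sD_s(b,r^-)$; then $\vphi^{-1}(V)=\bigsqcup_{i=1}^d\vphi_i^{-1}(V)$ is again a disjoint union of $d$ open discs, each mapping isomorphically onto $V$ via the corresponding restriction of $\vphi$. Since $V$ is connected, $\phi(V)$ lies in a single component, say $\vphi_i^{-1}(V)$; combining $\vphi\circ\phi=\id_V$ with the fact that the restriction of $\vphi$ to $\vphi_i^{-1}(V)$ is an isomorphism onto $V$ forces $\phi$ to equal its inverse, which is the restriction of $\phi_{a_i}$. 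Hence $\phi=\phi_{a_i}$ as a section at $b$, and the set of sections of $\vphi$ at $b$ is exactly $\{\phi_{a_1},\dots,\phi_{a_d}\}$.

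I do not expect a genuine obstacle here: the substance is the local inversion of $\vphi$ already recorded before the statement, and once $r$ is chosen well the successive shrinkings preserve the ``$d$ disjoint discs, each an isomorphism'' picture automatically. The only points requiring a little care are bookkeeping ones — using non-branchingness of $b$ (through Lemma~\ref{lem: solution}) at the right moment to guarantee exactly $d$ components, and keeping in mind that on the disconnected space $\vphi^{-1}(V)$ the coordinate $t$ is really a tuple of functions, so ``$t_{|U}\mapsto 0$'' on the remaining components is simply the way one encodes ``$\phi$ targets a single component''.
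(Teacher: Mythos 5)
Your proposal is correct and follows essentially the same route as the paper: the $d$ sections are manufactured exactly as in the text, by inverting each restricted isomorphism $\vphi_{|\sD_t(a_i,r_i^-)}$ via the solution $u_{a_i}(s)$ and composing with the inclusion into $\vphi^{-1}\big(\sD_s(b,r^-)\big)$, which on functions gives $t_{|\sD_t(a_i,r_i^-)}\mapsto u_{a_i}(s)$ and $t\mapsto 0$ on the other components. The only difference is that you spell out the ``exactly $d$'' part (any section over a connected shrunken disc must land in a single component and is then forced to be the corresponding inverse), a step the paper leaves implicit; this is a welcome clarification but not a different argument.
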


\begin{rmk} 
Let $\phi_a$ be a section of $\vphi$ at $b$ that corresponds to the solution $u_a(s)$. Each element $g(t)\in\cO_t^\pm$ can uniquely be written as $g(t)=g_0(s)+g_1(s)\cdot t+\dots+g_{d-1}(s)\cdot t^{d-1}$, for some elements $g_i(s)\in \cO_s^\pm$. Then, $\phi_a^\#\big(g(t)\big)$ is given by $\phi_a^\#\big(g(t)\big)=g_0(s)+g_1(s)\cdot u_a(s)+\dots+g_{d-1}(s)\cdot u_a(s)^{d-1}$.
\end{rmk}

\section{$p$-adic differential modules over discs}

\subsection{General constructions}\hfill\\
\subsubsection{}
A standard reference for this section is \cite[Chapter 5]{Kedbook}. Let $K$ be a field and let $(R,d)$ be a $K$-differential ring, that is, a (commutative) $K$-algebra $R$ equipped with an additive map (derivation) $d:R\to R$ that satisfies $d(a\cdot b)=d(a)\cdot b+a\cdot d(b)$ and such that $d(\alpha)=0$, for $\alpha\in K$. By a differential module $(M,D)$ over $(R,d)$ of rank $\fr$ we mean a finite free  $R$-module $M$ of rank $\fr$ equipped with an additive map (derivation) $D:M\to M$ which satisfies $D(a\cdot m)=d(a)\cdot m+a\cdot D(m)$, for every $a\in R$ and $m\in M$. In particular, $(R,d)$ is a differential module over itself. We put $M^D:=\ker D$ and note that the latter is a $K$-vector space and we call its elements {\em horizontal}.

A morphism between two differential modules $(M_1,D_1)$, $(M_2,D_2)$ over $(R,d)$ is a morphism of $R$-modules $f:M_1\to M_2$ which commutes with derivations, $f\circ D_1=D_2\circ f$. Such a map is sometimes called horizontal. Note that if $(M_1,D_1)$ and $(M_2,D_2)$ are isomorphic differential modules, the $K$-vector spaces $M_1^{D_1}$ and $M_2^{D_2}$ are isomorphic.

Many operations that one can perform on $R$-modules carry on to differential modules (with suitable modifications). We recall some of them that will serve for our purposes.

Let $(M_1,D_1)$ and $(M_2,D_2)$ be differential modules over $(R,d)$ of ranks $\fr_1$ and $\fr_2$, respectively, and $m_1\in M$ and $m_2\in M_2$. Then, $(M_1\oplus M_2,D_1\oplus D_2)$, with $(D_1\oplus D_2)(m_1\oplus m_2)=D_1(m_1)\oplus D_2(m_2)$, and $(M_1\otimes M_2, D_1\otimes D_2)$ with $D_1\otimes D_2(m_1\otimes m_2)=D_1(m_1)\otimes m_2+m_1\otimes D_2(m_2)$ are also differential modules over $(R,d)$. Their ranks are $\fr_1+\fr_2$ and $\fr_1\cdot \fr_2$, respectively.

Let $\fr$ be the rank of $M$ and let $\ue:=\{e_1,\dots,e_\fr\}$ be an $R$-basis for $M$. If we write $D(e_i)=a_{i,1}\cdot e_1+\dots+a_{i,\fr}\cdot e_\fr$ we obtain an $\fr\times \fr$ matrix $A_\ue:=(a_{i,j})\in M_\fr(R)$. We say that $A_\ue$ is the {\em derivation matrix} with respect to $\ue$.

In particular, if we take $m=v_1\cdot e_1+\dots+v_\fr\cdot e_\fr\in M$, and identify $m$ with the column vector $\vec{v}:=(v_1,\dots,v_\fr)^T$ then  $D(m)=D(\vec{v})=d(\vec{v})+A^T_\ue\vec{v}$, which completely describes the action of $D$ on $M$ with respect to basis $\ue$. For example,
\begin{equation}\label{eq: assign system}
m\in M^D \quad \text{if and only if }\quad d(\vec{v})=-A_\ue^T\vec{v}, 
\end{equation}
hence to find horizontal elements one is led to solve the differential system on the right-hand side of \eqref{eq: assign system}. We call this system the {\em associated system} to differential module $(M,D)$ with respect to basis $\ue$.

We will often use identification of $m$ with the column vector $\vec{v}_\ue:=\vec{v}$ and may drop writing $\ue$ in the index if the basis is fixed or known from the context.

If now $\ue'$ is another $R$-basis for $M$, then there is a matrix $B:=B_{\ue,\ue'}\in Gl_\fr(R)$ such that $\ue=B\ue'$ and a direct calculation shows that the derivation matrix of $(M,D)$ with respect to $\ue'$ is 
\begin{equation}\label{eq: change basis}
A_{\ue'}=d(B_{\ue',\ue}) B_{\ue,\ue'}+B_{\ue',\ue}A_\ue B_{\ue,\ue'}.
\end{equation}
We finally recall that $\dim_K M^D\leq r$.
\subsubsection{}

Let $(R,d)$ and $(R',d')$ be differential rings and let $\phi:R\to R'$ be a $K$-algebra homomorphism such that there exists some $r'\in R'^*$ so that $d'(\phi(r))=r'\cdot\phi(d(r))$, for all $r\in R$. Let $(M,D)$ be a differential module over $(R',d')$. Then, we put $M_\phi$ to denote $R$-module obtained from $M$ by restriction of scalars via $\phi$, and $D_\phi$ a derivation on $M$ defined by $D_\phi(m)=\frac{1}{r'}\cdot D(m)$. For $m\in M$ we will often write $(m)_\phi$ if we want to see $m$ as an element of $M_\phi$. 

\begin{defn}
 If $R'$ is a finite $R$-module, then $(M_\phi,D_\phi)$ is a differential module over $(R,d)$ which is called a direct image of $(M,D)$ by $\phi$.
\end{defn}

In fact, it is not hard to see that if $e_1,\dots,e_\fr$ is a basis of $M$ over $R'$ and $f_1,\dots,f_n$ is a basis of $R'$ over $R$, then $f_ie_j$, $i=1,\dots n$, $j=1,\dots,\fr$ is a basis of $M_\phi$ over $R$.  

\begin{rmk}\label{rmk: bijection horizontal}
We note that there is a $K$-linear bijection between $M^D$ and $_\phi M^{D_\phi}$ by construction which gives us that $\dim_K M^D=\dim_K {_\phi} M^{D_\phi}$. 
\end{rmk}
\begin{lemma}\label{lem: ext restr}
 Let $(M,D)$ be a differential module over $(R',d')$, and let 
 \begin{diagram}[H]
\centering
\begin{tikzcd}
  R' \arrow[d,hook,"\psi'"] \arrow[r,leftarrow, "\phi"] & R \arrow[d,hook,"\psi"]\\
  R'_0 \arrow[r,leftarrow, "\phi_0"]& R_0
\end{tikzcd}
 \end{diagram}
be a commutative diagram where $(R',d'), (R_0,d_0), (R_0',d_0')$ are differential rings, $\phi,\phi_0,\psi, \psi'$ are $K$-algebra morphisms, $\phi$ and $\phi_0$ are finite of the same degree, and $\psi$ and $\psi'$ are horizontal. Suppose further there exists an invertible element $\alpha\in R'$ such that for any $a\in R$ (resp. $a_0\in R_0$), we have $d'(\phi(a))=\alpha\cdot\phi\big(d(a)\big)$ (resp. $d'(\phi_0(a_0))=\psi'(\alpha)\cdot\phi_0\big(d_0(a_0)\big)$). Then, we have an isomorphism of differential modules
$$
(M_{\phi}\otimes R_0, D_\phi\otimes d_0)\iso \big((M\otimes R_0')_{\phi_0},(D\otimes d_0)_{\phi_0}\big).
$$
\end{lemma}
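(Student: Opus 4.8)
The plan is to construct the isomorphism explicitly on bases and then check that it is horizontal, reducing everything to a bookkeeping computation with derivation matrices. First I would fix a basis $e_1,\dots,e_\fr$ of $M$ over $R'$ and a basis $f_1,\dots,f_n$ of $R'$ over $R$, so that $\{(f_i e_j)_\phi\}$ is a basis of $M_\phi$ over $R$ by the remark following the definition of direct image, and hence $\{(f_i e_j)_\phi\otimes 1\}$ is a basis of $M_\phi\otimes R_0$ over $R_0$. On the other side, since $\phi_0$ is finite of the same degree $n$, and using the compatibility $\psi'(f_1),\dots,\psi'(f_n)$ with the map $\psi'\colon R'\to R_0'$, I would argue that the images $\psi'(f_1),\dots,\psi'(f_n)$ form a basis of $R_0'$ over $R_0$ (this uses that $\psi,\psi'$ fit into the commutative square with $\phi,\phi_0$ of equal degree and that $\psi$ is an inclusion; more precisely $R_0' \cong R' \otimes_R R_0$ as $R_0$-modules). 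Then $\{(\psi'(f_i)\,(e_j\otimes 1))_{\phi_0}\}$ is a basis of $(M\otimes R_0')_{\phi_0}$ over $R_0$, and I would define the candidate map $\Phi$ by sending $(f_i e_j)_\phi\otimes 1 \mapsto (\psi'(f_i)\,(e_j\otimes 1))_{\phi_0}$ and extending $R_0$-linearly. By construction $\Phi$ is an isomorphism of $R_0$-modules.

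Next I would verify that $\Phi$ commutes with the two derivations $D_\phi\otimes d_0$ and $(D\otimes d_0)_{\phi_0}$. The cleanest way is to compute both derivations on the chosen bases. On the source, $(D_\phi\otimes d_0)\big((f_i e_j)_\phi\otimes 1\big) = (D_\phi(f_i e_j))_\phi\otimes 1$, and $D_\phi(f_i e_j) = \frac{1}{\alpha}D(f_i e_j) = \frac1\alpha\big(d'(f_i)e_j + f_i D(e_j)\big)$; one then re-expresses $d'(f_i)$ and $D(e_j)$ in terms of the basis $\{f_k e_l\}$ using $d'(f_i) = \sum_k c_{ik}f_k$ and $D(e_j) = \sum_l a_{jl}e_l$ with $c_{ik}, a_{jl}\in R'$ written in turn in the $f$-basis, being careful that $\frac1\alpha$ also has to be expanded in the $f$-basis. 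On the target, $(D\otimes d_0)_{\phi_0}$ applied to $(\psi'(f_i)(e_j\otimes 1))_{\phi_0}$ equals $\frac{1}{\psi'(\alpha)}$ times $(D\otimes d_0)\big(\psi'(f_i)(e_j\otimes 1)\big)$, and one expands $(D\otimes d_0)$ via the Leibniz rule, using $d_0'(\psi'(f_i)) = \psi'(d'(f_i))$ (because $\psi'$ is horizontal) and $(D\otimes d_0)(e_j\otimes 1) = D(e_j)\otimes 1 = \sum_l a_{jl}(e_l\otimes 1)$, and again expanding $\frac{1}{\psi'(\alpha)} = \psi'(1/\alpha)$ in the $\psi'(f)$-basis. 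Since $\psi'$ is an $R$-algebra homomorphism sending the structure constants $c_{ik}, a_{jl}$ (viewed via their $f$-expansions with coefficients in $R$) to their counterparts, the two computations match term by term under $\Phi$, which is exactly horizontality.

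I expect the main obstacle to be the bookkeeping around the element $\alpha$ and the base change, rather than any conceptual difficulty. Specifically: (i) one must check carefully that $\psi'(f_1),\dots,\psi'(f_n)$ is genuinely an $R_0$-basis of $R_0'$ — this is where the hypothesis "$\phi_0$ finite of the same degree as $\phi$" is used, and it amounts to the natural surjection $R'\otimes_R R_0\to R_0'$ being an isomorphism, which I would justify by a rank/degree count together with the commutativity of the square; and (ii) the factor $\alpha$ on one side and $\psi'(\alpha)$ on the other must be threaded through consistently, which is precisely guaranteed by the stated relations $d'(\phi(a)) = \alpha\,\phi(d(a))$ and $d'(\phi_0(a_0)) = \psi'(\alpha)\,\phi_0(d_0(a_0))$ being compatible. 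Once these two points are in place, the verification that $\Phi$ intertwines the derivations is a direct Leibniz-rule computation, and I would present it compactly by checking it on the generators $f_i e_j$ and invoking $R_0$-semilinearity of both derivations (i.e. the Leibniz rule over $(R_0,d_0)$) to conclude on all of $M_\phi\otimes R_0$.
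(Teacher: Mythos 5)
Your map on generators, $(f_ie_j)_\phi\otimes 1\mapsto\big(\psi'(f_i)\,(e_j\otimes 1)\big)_{\phi_0}$, is exactly the paper's natural map $\sum_i (m_i)_\phi\otimes a_i\mapsto\big(\sum_i m_i\otimes\phi_0(a_i)\big)_{\phi_0}$, so your argument is essentially the paper's proof written out in coordinates: the paper defines this map coordinate-free and leaves horizontality and bijectivity as an immediate check, while you carry out the Leibniz/derivation-matrix verification explicitly. Your point (i), that $\psi'(f_1),\dots,\psi'(f_n)$ is an $R_0$-basis of $R'_0$ (equivalently $R'_0\cong R'\otimes_R R_0$), is precisely the bijectivity issue the paper treats as obvious; it is not a formal consequence of "finite of the same degree" alone, but it is exactly what holds in the intended application, where $R'_0$ is the product of the rings of the discs lying over the smaller disc.
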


\begin{proof}
 
An element of the module on the left-hand side has a form $\sum_{i=1}^l(m_i)_{\phi}\otimes a_i$, for some $m_i\in M$ and $a_i\in R_0$. Then, it is easy to see that the map
$$
\sum_{i=1}^l(m_i)_{\phi}\otimes a_i\mapsto \left(\sum_{i=1}^lm_i\otimes \phi_0(a_i)\right)_{\phi_0}
$$
gives us the required horizontal isomorphism.
\end{proof}

\subsection{Setting}\hfill\\

\subsubsection{}
In our setting, the base differential rings will be of the form $(\cO_t^\pm,d_t)$ (with $K=k$) where $d_t=\frac{d}{dt}$. If $(M,D)$ is a differential module over $(\cO_t^\pm,d_t)$, we put $D_t:=D$ to keep track of variables. 

Furthermore, if $a\in \sD_t^\pm(k)$ and $r\in (0,1)$ we denote by $(M_{a,r^\pm},D_t)$ the differential module $(M\otimes \cO_t(a,r^\pm),D_t\otimes \frac{d}{dt})$, hoping that it will be clear from the context whether by $D$ we mean $D_t$ or $D_t\otimes \frac{d}{dt}$.

Let $(M,D_t)$ be a differential module of rank $\fr$ over $(\cO_t^\pm,d_t)$. For $i=1,\dots,\fr$, we put
$$
\cR_i:=\cR_i\big(a,(M,D)\big):=\sup\{s\in (0,1]\mid \dim_k M_{a,r^-}^{D_t}\geq \fr-i+1\}.
$$

\begin{defn}
 The $r$-tuple $(\cR_1,\dots,\cR_\fr)$ is called the {\em multiradius} of convergence of (horizontal elements of) $(M,D_t)$ at $a$. The number $\cR_1$ is called the radius of convergence of (horizontal elements of) $(M,D_t)$ at $a$. 
\end{defn}

That the definition is good follows from the following, $p$-adic Cauchy theorem.

\begin{prop}
 The radius of convergence of $(M,D)$ at $a$ is a non-negative number. In particular, for every  $r\in \big(0,\cR_1(a,(M,D))\big)$, $\dim_k M_{a,r}^D=\fr$.  
\end{prop}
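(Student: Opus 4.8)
The plan is to choose a basis, translate the condition of being horizontal into a linear differential system in the coordinate $t-a$, solve it by power series, and bound the radii of convergence of the solutions by a $p$-adic majorant argument. Concretely, I would fix an $\cO_t^\pm$-basis $\ue=\{e_1,\dots,e_\fr\}$ of $M$, let $A=A_\ue\in M_\fr(\cO_t^\pm)$ be the derivation matrix, and use \eqref{eq: assign system}: an element of $M_{a,r^\pm}$ is horizontal precisely when its coordinate vector $\vec v$ satisfies $\frac{d}{dt}\vec v=-A^T\vec v$ in $\cO_t(a,r^\pm)^\fr$. Since $a$ lies in the unit disc we have $\sD_t(a,1^\pm)=\sD_t^\pm$, so the Taylor expansion $-A^T=\sum_{n\ge0}A_n(t-a)^n$ converges there, with $A_n\in M_\fr(k)$. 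Writing $\vec v(t)=\sum_{n\ge0}v_n(t-a)^n$, the system becomes the recursion $(n+1)v_{n+1}=\sum_{j=0}^nA_jv_{n-j}$; hence for each prescribed $v_0\in k^\fr$ there is a unique formal solution, and $v_0\mapsto\vec v$ is $k$-linear and injective (the value at $a$ is $v_0$). What remains is to bound the radius of convergence of $\vec v$ from below, uniformly in $v_0$.

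For the estimate I would pick $\rho_0\in(0,1]$ and $C>0$ with $|A_n|\le C\rho_0^{-n}$ for all $n$ (possible since $-A^T$ is analytic on $\sD_t(a,1^\pm)$), set $B:=\max\{C,\rho_0^{-1}\}$, and prove by induction on $n$ that $|v_n|\le B^n|n!|^{-1}|v_0|$; the inductive step uses only that $|(n-j)!|\ge|n!|$, hence $|n+1|^{-1}|(n-j)!|^{-1}\le|(n+1)!|^{-1}$, together with $B\rho_0\ge1$. Combining this bound with the elementary inequality $|n!|\ge|p|^{n/(p-1)}$ gives $|v_n|\,\rho^n\le(B\,|p|^{-1/(p-1)}\rho)^n|v_0|$, which tends to $0$ whenever $\rho<\rho_1:=|p|^{1/(p-1)}B^{-1}>0$. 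Thus each formal solution lies in $\cO_t(a,\rho^-)^\fr$ for every $\rho<\rho_1$; letting $v_0$ run through a $k$-basis of $k^\fr$ produces $\fr$ horizontal elements of $M_{a,\rho^-}$ that are $k$-linearly independent, so $\dim_kM_{a,\rho^-}^{D_t}\ge\fr$, and the general upper bound $\dim_kM_{a,\rho^-}^{D_t}\le\fr$ recalled above forces equality. In particular $\cR_1(a,(M,D))\ge\min\{1,\rho_1\}>0$.

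It then remains to pass to every $r<\cR_1(a,(M,D))$, for which I would use monotonicity: for $r<r'$ the restriction $\cO_t(a,r'^-)\hookrightarrow\cO_t(a,r^\pm)$ is injective and commutes with $d_t$, hence induces an injection $M_{a,r'^-}^{D_t}\hookrightarrow M_{a,r^\pm}^{D_t}$, so $\dim_kM_{a,r^\pm}^{D_t}$ is non-decreasing as $r$ decreases. Consequently the set of $s\in(0,1]$ with $\dim_kM_{a,s^-}^{D_t}\ge\fr$ is an interval, hence contains $(0,\cR_1(a,(M,D)))$, and then restricting from an open disc of radius slightly larger than $r$ gives $\dim_kM_{a,r}^{D_t}\ge\fr$, hence $=\fr$, for every $r<\cR_1(a,(M,D))$.

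The only step requiring genuine care is the induction of the second paragraph: over the complex numbers the factor $|n+1|^{-1}$ in the recursion is harmless and one would get radius $\ge B^{-1}$, whereas $p$-adically $|n+1|^{-1}$ is unbounded and must be absorbed into $|n!|$, which is precisely what produces the constant $|p|^{1/(p-1)}$ — and, conversely, explains why a nonzero solution of $y'=y$ converges only on the disc of that radius. The remaining ingredients (choosing the basis, the recursion, the monotonicity in $r$, and the already-recalled bound $\dim\le\fr$) are routine.
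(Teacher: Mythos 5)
Your argument is correct, but note that the paper itself offers no proof of this proposition: it is invoked as the classical $p$-adic Cauchy theorem (the standard references being \cite{Kedbook} or \cite{DGS}), so there is no internal proof to compare against. What you wrote is precisely the standard textbook argument — translate horizontality into the system $\frac{d}{dt}\vec v=-A_\ue^T\vec v$, solve by the recursion $(n+1)v_{n+1}=\sum_{j=0}^n A_jv_{n-j}$, and absorb the division by $n+1$ into $|n!|^{-1}$ via $|n!|\geq|p|^{n/(p-1)}$ to get the lower bound $\cR_1\geq\min\{1,|p|^{1/(p-1)}B^{-1}\}>0$ — and your handling of the remaining points (injectivity of $v_0\mapsto\vec v$, the upper bound $\dim_k M^D\leq\fr$, and monotonicity under restriction to smaller discs to cover every $r\in\big(0,\cR_1(a,(M,D))\big)$, including the closed disc $M_{a,r}$) is sound, so the proof is complete.
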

Having in mind the proposition, in what follows we will use $(M_a,D_t)$ to denote the differential module $(M_{a,r^-},D_t)$, for some unspecified $r\in \big(0,\cR_1(a,(M,D_t))\big)$. Then, it makes sense to speak of a basis of $M_a^{D_t}$ to which we will also refer as to a {\em basis of horizontal elements of $(M,D_t)$ (or a basis of $M^{D_t}$) at $a$}.

In fact, one can as well define radius of convergence at $a$ of a single horizontal element $m\in M_a^{D_t}$. For this, let $\ue$ be a basis for $M$ over $\cO_t^\pm$ and let $A_\ue:=A_\ue(t)\in M_\fr(\cO_t^\pm)$ be the matrix of derivation $D_t$, with respect to $\ue$. Then, $m$ can be identified with the column vector of analytic functions $\vec{y}(t)\in k[[s-a]]^\fr$ and we put $\cR_a(m):=\cR_a\big(\vec{y}(t)\big)$. That this definition does not depend on the chosen basis $\ue$ follows from a simple observation that if $B(t)\in Gl_\fr(\cO_t^\pm)$, then $\cR_a\big(B(t)\,\vec{y}(t)\big)\geq \cR_a\big(\vec{y}(t)\big)$ (recall our Definition \ref{def: radius} and equations \eqref{eq: assign system} and \eqref{eq: change basis}).

\begin{defn}\label{defn: optimal}
A basis $m_1,\dots,m_\fr$ of space of horizontal elements of $(M,D)$ at $a$ is called optimal if numbers $\cR_a(m_1),\cdots,\cR_a(m_\fr)$ can be rearranged to form the multiradius of convergence of $(M,D)$ at $a$. 
%
%
\end{defn}
Another way to look at optimal basis is to consider 
$$
\Pi:=\sup\{\prod_{i=1}^\fr\cR_a(m_i)\mid m_1,\dots,m_\fr\text{ is a basis of $M^D$ at $a$ }\}.
$$
Then, $m_1,\dots,m_\fr$ is optimal basis for $M^D$ at $a$ if and only if (see \cite{You})
$$
\Pi=\prod_{i=1}^\fr\cR_a(m_i).
$$

Here is a simple but useful criterion for a basis of horizontal elements to be optimal.

\begin{lemma}\label{lem: optimal criterion}
Let $(M,D_t)$ be a differential module of rank $\fr$ over $\cO_t^\pm$, and let $m_1,\dots,m_\fr$ be a basis of horizontal elements of $M$ at some $a\in \sD_t^\pm(k)$. The following are equivalent:
\begin{enumerate}
 \item $m_1,\dots,m_\fr$ is an optimal basis at $a$;
 \item let $J_r\subseteq \{1,\dots,\fr\}$ be such that $\cR_a(m_j)=r$ for any $j\in J_r$. Then, the radius of convergence of any nontrivial $k$-linear combination 
 $
 \sum_{j\in J_r}\alpha_j\cdot m_j
 $
 is $r$. 
 \item If $m\in M^D_a$ has radius of convergence $r$, then $m=\sum_{i=1}^\fr\beta_i\cdot m_i$ with $\cR_a(m_i)\geq r$ for every $i$ with $\beta_i\neq 0$ and for at least one such $i$ we have $\cR_a(m_i)=r$.
\end{enumerate}

\end{lemma}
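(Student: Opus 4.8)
The plan is to prove the three conditions are equivalent by showing $(1)\Rightarrow(3)\Rightarrow(2)\Rightarrow(1)$, relying throughout on the characterization of an optimal basis via the supremum $\Pi=\prod_i\cR_a(m_i)$ recalled just before the lemma (from \cite{You}), together with the elementary fact that the radius of convergence of a nontrivial $k$-linear combination of horizontal elements is at least the minimum of their radii, and equals that minimum whenever the smallest radius is attained by a unique summand. It will be convenient to reorder so that $\cR_a(m_1)\geq \cR_a(m_2)\geq\cdots\geq \cR_a(m_\fr)$, and to recall that for any basis the $j$-th largest of the numbers $\cR_a(m_i)$ is $\leq \cR_{a,j}$ (the $j$-th component of the multiradius), with the basis optimal exactly when equality holds for every $j$.

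For $(1)\Rightarrow(3)$: suppose the basis is optimal and let $m\in M_a^{D_t}$ have $\cR_a(m)=r$. Write $m=\sum_i\beta_i m_i$. If some $\beta_i\neq 0$ had $\cR_a(m_i)<r$, then consider the largest index $i_0$ with $\beta_{i_0}\neq 0$; replacing $m_{i_0}$ by $m$ in the basis keeps a basis, and I will argue (using the interlacing of the sorted radii with the multiradius, exactly as in the proof that $\Pi$ is attained only by optimal bases) that this strictly increases $\prod_i\cR_a$ of the new basis beyond $\Pi$ — contradiction. Hence every $i$ with $\beta_i\neq 0$ has $\cR_a(m_i)\geq r$; and since $\cR_a$ of the combination is at least $\min_{\beta_i\neq 0}\cR_a(m_i)$, for the value to be exactly $r$ at least one such $i$ must have $\cR_a(m_i)=r$.

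For $(3)\Rightarrow(2)$: fix $r$ and $J_r$, and take a nontrivial combination $m=\sum_{j\in J_r}\alpha_j m_j$. Clearly $\cR_a(m)\geq r$ since each summand has radius $r$. By $(3)$ applied to $m$ (which has some radius $r'\geq r$), $m$ is a combination of basis vectors all of radius $\geq r'$ with at least one of radius exactly $r'$; but $m$ already lies in the span of the $m_j$, $j\in J_r$, all of which have radius exactly $r$, so the forced equality $\cR_a(m_i)=r'$ for some contributing $i\in J_r$ gives $r'=r$. Hence $\cR_a(m)=r$, which is $(2)$.

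For $(2)\Rightarrow(1)$: group the indices by the value of the radius, $\{1,\dots,\fr\}=\bigsqcup_r J_r$. By $(2)$, on each block $J_r$ every nontrivial combination has radius exactly $r$, so $\dim_k\bigl(M_a^{D_t}\cap \cO_t(a,\rho^-)^\fr\bigr)$, as $\rho$ increases through the values $r$, jumps by exactly $\#J_r$ at each $r$; comparing with the definition of $\cR_{a,i}$ shows the sorted list of the $\cR_a(m_i)$ coincides with $(\cR_{a,1},\dots,\cR_{a,\fr})$, i.e. the basis is optimal. The main obstacle is the implication $(1)\Rightarrow(3)$: making precise the claim that a "bad" expansion of a horizontal element lets one swap in a vector of strictly larger radius and thereby beat $\Pi$. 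This requires the standard but slightly delicate combinatorial argument with the interlacing inequalities between the sorted radii of an arbitrary basis and the multiradius — essentially the content of the cited result of \cite{You} — and care that the swap indeed produces a basis (nonvanishing of the relevant coefficient, handled by choosing $i_0$ maximal among indices actually appearing).
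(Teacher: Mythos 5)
Your proposal is correct, but it closes the cycle of implications along a different route than the paper, so let me compare. Your swap argument for \emph{(1)}$\Rightarrow$\emph{(3)} is in substance the paper's argument for \emph{(1)}$\Rightarrow$\emph{(2)}: if a basis vector of radius $<r$ occurred with nonzero coefficient in the expansion of $m$, exchanging it for $m$ produces a new basis of $M_a^{D_t}$ whose product of radii strictly exceeds $\Pi$, contradicting the characterization of optimality via $\Pi$ cited from \cite{You}; note that this product comparison alone already closes the argument, so the ``interlacing inequalities'' you invoke are not needed (and, with the paper's ordering of the multiradius, it is the $j$-th \emph{smallest} radius of a basis that is bounded above by $\cR_{a,j}$, not the $j$-th largest). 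Your \emph{(3)}$\Rightarrow$\emph{(2)} via uniqueness of the expansion in the basis is exactly the step the paper dismisses as easy. The genuinely different step is \emph{(2)}$\Rightarrow$\emph{(1)}: the paper compares the given basis with an auxiliary optimal basis and produces a linear combination contradicting \emph{(2)}, whereas you read optimality off directly from \emph{(2)} by identifying, for every $\rho$, the space $M_{a,\rho^-}^{D_t}$ with the span of those $m_i$ having $\cR_a(m_i)\geq\rho$, so that the sorted radii reproduce the multiradius by its definition; this buys you independence from the existence of an auxiliary optimal basis and is arguably more direct. One compressed inference should be spelled out there: given a horizontal $m=\sum_i\beta_i\cdot m_i$ with $\cR_a(m)\geq\rho$, apply \emph{(2)} to the sub-sum over the contributing $m_i$ of minimal radius $r_0$ and use that the remaining terms have radius $>r_0$ to conclude $\cR_a(m)=r_0$, hence $r_0\geq\rho$; this is precisely what makes $\dim_k M_{a,\rho^-}^{D_t}$ drop by $\#J_r$ as $\rho$ crosses each value $r$, and with it your dimension count is complete.
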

\begin{proof}
\emph{(1)}$\Rightarrow$\emph{(2)}  It is clear that the radius of convergence of $m_0:=\sum_{j\in J}\alpha_j\cdot m_j$ is bigger than or equal to $r$. Suppose it is strictly bigger and $r<1$. Then, if we take out from $m_1,\dots,m_\fr$ one $m_{j_0}$ with $j_0\in J_r$ and $\alpha_{j_0}\neq 0$ and substitute it with $m_0$, we will obtain a new basis of horizontal elements, but this time with the product of the radii of convergence strictly bigger than that of the starting basis, which is is a contradiction.

\emph{(2)}$\Rightarrow$\emph{(1)} Suppose that the basis is not optimal, and let $m_1',\dots,m'_\fr$ be any optimal basis of horizontal elements of $M$ at $a$. We note that by our assumption and first part of the proof, for both bases $\{m_1,\dots,m_\fr\}$ and $\{m_1',\dots,m_\fr'\}$ property \emph{(2)} holds. 

From Definition \ref{defn: optimal} it follows that there is some $r\in (0,1]$ such that the number of $m_i$'s which have radius of convergence $r$ is strictly smaller than the number of $m'_i$'s with the same property, which implies that the number of $m_i$'s which have radius of convergence strictly bigger than $r$ is smaller than the number of $m'_i$'s with the same property. Hence, there is some $m'_j$ with radius of convergence bigger than $r$ such that if we write 
\begin{equation}\label{eq: first sum}
m'_j=\sum_{i=1}^\fr \alpha_i\cdot m_i,
\end{equation}
there will be some $i_0\in \{1,\dots,\fr\}$ such that  $\alpha_{i_0}\neq 0$ and $\cR_a(m_{i_0})=r_0$. We may choose $i_0$ so that $r_0$ is smallest among all the radii of convergence of elements $m_i$ for which $\alpha_i\neq 0$. Note that by what we said before $r_0\leq r$. Then, we may rewrite \eqref{eq: first sum} as 
$$
\sum_{\substack{i=1\\ \cR_a(m_i)=r_0}}^\fr\alpha_i\cdot  m_i=-\sum_{\substack{i=1\\\cR_a(m_i)>r_0}}^\fr\alpha_i\cdot m_i-m_j',
$$
where the sum on the left is non-trivial. Howerever, the right-hand side has radius of convergence bigger than $r_0$ which contradicts assumption \emph{(2)}.

\emph{(2)}$\Rightarrow$\emph{(3)} Let $r_0$ be the smallest among the radii of convergence of $m_i$ for which $\beta_i\neq 0$. Then, we may write
$$
\sum_{\substack{i=1\\ \cR_a(m_i)=r_0}}^\fr\beta_i\cdot  m_i=-\sum_{\substack{i=1\\\cR_a(m_i)>r_0}}^\fr\beta_i\cdot m_i-m,
$$
hence by \emph{(2)} it follows that $r_0=r$ which implies \emph{(3)}. 

\emph{(3)}$\Rightarrow$\emph{(2)} is easy.
\end{proof}


\subsubsection{}\label{ssec: setting}

Let $\vphi:\sD_t^\pm\to \sD_t^\pm$ be a finite \'etale morphism of unit discs of degree $d$, and $s=f(t)$ its $(s,t)$-coordinate representation. Then $f'(t)$ is invertible in $\cO_t^\pm$ and we have that for any $g(s)\in \cO_s^\pm$, $d_t\big(g(f(t))\big)=f'(t)\cdot\vphi^\#(d_s(g(s)))$ so in particular, we may form the direct image of $(M,D_t)$ by $\vphi$ which we denote by $(M_\vphi,D_s)$.

We recall that the action of $D_s$ on $M_\vphi$ is then given by $D_s(m)=\big(\frac{1}{f'(t)}\cdot D_t(m)\big)_\vphi$. More precisely, if $\{e_1,\dots,e_\fr\}$ is a $\cO_t^\pm$ basis for $M$, and if $m\in M$, $m=a_1(t)\cdot e_1+\dots+a_\fr(t)\cdot e_\fr$, then
$$
D_s(m)=\left(\frac{1}{f'(t)}\cdot\sum_{i=1}^\fr\left(\frac{d}{dt}a_i(t)\cdot e_i+a_i(t)\cdot D_t(e_i)\right)\right)_\vphi=\sum_{i=1}^\fr\sum_{j=0}^{d-1}a_{i,j}(s)\cdot t^j(e_i)_\vphi.
$$

Let $r\in (0,1)$, $b\in \sD_s(b,r^-)$ and put $\{a_1,\dots,a_d\}=\vphi^{-1}(b)$ and let $U:=\vphi^{-1}\big(\sD_s(b,r^-)\big)=\cup_{i=1}^n\sD_t(a_i',r_i^-)$, where the union is disjoint and where $\vphi(a_i')=b$. We recall that we have a commutative Diagram \ref{dia: d2}, and we note that the restriction morphisms $\psi_s:(\cO_s^\pm,d_s)\to (\cO_s(b,r^-),d_s)$ and $\psi_t:(\cO_t^\pm,d_t)\to (\oplus_{i=1}^n\cO_t(a_i',r_i^-),\oplus_{i=1}^nd_t)$  are  horizontal morphisms of differential rings. Furthermore, for every $g(s)\in \cO_s(b,r^-)$, we have that 
$$
d_t\big(\vphi_{|U}^\#(g(s))\big)=\bigoplus\limits_{i=1}^n d_t\big(g(f(t))\big)=f'(t)\cdot\bigoplus_{i=1}^ng'\big(f(t)\big)=f'(t)\cdot\vphi_{|U}^{\#}\big(d_s(g(s))\big),
$$
hence all the conditions of Lemma \ref{lem: ext restr} are satisfied and we obtain the following
\begin{cor} \label{cor: iso diff}
Let $(M,D_t)$ be a differential module over $(\cO_t^\pm,d_t)$. Then,  the map 
\begin{align*}
 \Phi_r:M_\vphi\otimes \cO_s(b,r^-)&\to\big(M\otimes\bigoplus_{i=1}^n\cO_t(a_i',r_i^-)\big)_{\vphi_{|U}}=\bigoplus_{i=1}^n\big(M\otimes \cO_t(a_i',r_i^-)\big)_{\vphi_{|\sD_t(a_i',r_i^-)}}\\
 \sum_{j=1}^l (m_j)_\vphi\otimes g_j(s) &\mapsto \left(\sum_{j=1}^l m_i\otimes\vphi_{|U}^\#\big(g_j(s)\big)  \right)_{\vphi_{|U}} =\bigoplus_{i=1}^n\sum_{i=1}^l\left( m_i\otimes \vphi_{|\sD_t(a_i',r_i^-)}^{\#}\big(g_i(s)\big)\right)_{\vphi_{|\sD_t(a_i',r_i^-)}},
\end{align*}
induces an isomorphism of differential modules.
\end{cor}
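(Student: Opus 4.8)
The plan is to realize $\Phi_r$ as the isomorphism furnished by Lemma~\ref{lem: ext restr}, applied to Diagram~\ref{dia: d2}. Concretely, I would take $R'=\cO_t^\pm$ with $d'=d_t$, $R=\cO_s^\pm$ with $d=d_s$ and $\phi=\vphi^\#$; on the lower row $R_0'=\bigoplus_{i=1}^n\cO_t(a_i',r_i^-)$ with $d_0'=\bigoplus_{i=1}^n d_t$, $R_0=\cO_s(b,r^-)$ with $d_0=d_s$ and $\phi_0=\vphi_{|U}^\#$, the vertical morphisms being the restriction maps $\psi'=\psi_t$ and $\psi=\psi_s$; and for the auxiliary invertible element I take $\alpha=f'(t)$, which is a unit of $\cO_t^\pm$ because $\vphi$ is \'etale.

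Next I would check, one at a time, the hypotheses of Lemma~\ref{lem: ext restr}. The square is precisely Diagram~\ref{dia: d2}, hence commutes. The maps $\psi_s$ and $\psi_t$ are $k$-algebra homomorphisms commuting with the respective derivations (being restrictions of power series), hence horizontal. The morphism $\phi=\vphi^\#$ is finite of degree $d$ by~\ref{subs: poly}, and $\phi_0=\vphi_{|U}^\#$ is finite of the same degree $d$: the restriction $\vphi_{|U}\colon U\to\sD_s(b,r^-)$ is finite, $\vphi^{-1}(b)$ consists of $d$ points counted with multiplicity, and the factorization of $P(s,X)$ over $\cO_s(b,r^-)$ recalled in~\ref{subs: poly} exhibits $\bigoplus_{i=1}^n\cO_t(a_i',r_i^-)$ as a free $\cO_s(b,r^-)$-module of rank $d$. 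Finally, the two required chain-rule identities, $d_t(\vphi^\#(g))=f'(t)\cdot\vphi^\#(d_s(g))$ for $g\in\cO_s^\pm$ and $d_t(\vphi_{|U}^\#(g))=\psi_t(f'(t))\cdot\vphi_{|U}^\#(d_s(g))$ for $g\in\cO_s(b,r^-)$, are exactly the relations displayed just above the statement, once one notes that $\psi_t(f'(t))$ is the restriction of $f'(t)$ to $U$.

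With these hypotheses in place, Lemma~\ref{lem: ext restr} produces a horizontal isomorphism from $M_\vphi\otimes\cO_s(b,r^-)$ onto the restriction of scalars of $M\otimes\bigoplus_{i=1}^n\cO_t(a_i',r_i^-)$ along $\vphi_{|U}^\#$, given on elementary tensors by $\sum_j(m_j)_\vphi\otimes g_j(s)\mapsto\bigl(\sum_j m_j\otimes\vphi_{|U}^\#(g_j(s))\bigr)_{\vphi_{|U}}$, which is exactly the map $\Phi_r$. It then only remains to rewrite the target: since $U$ is a disjoint union of the discs $\sD_t(a_i',r_i^-)$, the ring $\bigoplus_{i=1}^n\cO_t(a_i',r_i^-)$ is a product of the differential rings $\cO_t(a_i',r_i^-)$, so $M\otimes\bigoplus_i\cO_t(a_i',r_i^-)=\bigoplus_i\bigl(M\otimes\cO_t(a_i',r_i^-)\bigr)$, and restriction of scalars along $\vphi_{|U}^\#=\bigoplus_i\vphi_{|\sD_t(a_i',r_i^-)}^\#$ splits as the corresponding direct sum of restrictions; this yields the decomposition asserted in the statement and identifies $\Phi_r$ with the claimed isomorphism. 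I do not expect a genuine obstacle here: all the analytic and differential content is already packaged into Lemma~\ref{lem: ext restr} and into the chain-rule computations of~\ref{ssec: setting}, so the argument reduces to the bookkeeping of matching the two rows of Diagram~\ref{dia: d2} with the abstract square of that lemma; the one point meriting care is confirming that finiteness and the degree are inherited by $\vphi_{|U}$, which follows from the description of $\vphi^{-1}(\sD_s(b,r^-))$ as a disjoint union of discs mapping finitely onto $\sD_s(b,r^-)$ together with the factorization of $P(s,X)$ recalled in~\ref{subs: poly}.
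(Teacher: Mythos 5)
Your proposal is correct and matches the paper's own argument: the corollary is obtained exactly by applying Lemma~\ref{lem: ext restr} to Diagram~\ref{dia: d2} with $\alpha=f'(t)$, after noting that the restriction maps $\psi_s,\psi_t$ are horizontal and that $d_t\big(\vphi_{|U}^\#(g)\big)=f'(t)\cdot\vphi_{|U}^\#\big(d_s(g)\big)$, and then splitting the target over the disjoint discs. Your extra care in checking that $\vphi_{|U}^\#$ is finite of the same degree $d$ (via the factorization of $P(s,X)$ in~\ref{subs: poly}) is a point the paper leaves implicit, but it is the same route.
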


\subsubsection{}\label{ssec: setting cont} We next describe more precisely isomorphism $\Phi_r$. Let us denote its inverse by $\Psi_r$ and keep the setting and notation from the previous section. Let us fix once and for all an $\cO_t^\pm$-basis $\ue=\{e_1,\dots,e_\fr\}$ for $M$. Then, any element $m$ in $M_\vphi\otimes \cO_s(b,r^-)$ can be written as (we identify $s$ with $\vphi^\#(s)$)
$$
m=\sum_{j=1}^\fr\sum_{m=0}^{d-1}g_{j,m}(s)\cdot t^m\cdot e_j,\quad g_{j,m}(s)\in\cO_s(b,r^-).
$$
On the other side, the image of $\sum_{m=0}^{d-1}g_{j,m}(s)\cdot t^m$ by $\Psi_r$ defines an element in $\bigoplus_{i=1}^n \cO_t(a_i',r_i^-)$, hence an analytic function on every $\sD_t(a_i',r_i^-)$, $i=1,\dots,n$. If we denote the degree of $\cO_s(b,r^-)\hookrightarrow\cO_t(a_i,r_i^-)$ by $d_i$, it follows that there exist functions $G_{i,j,m}(s)\in\cO_s(b,r^-)$ such that  
\begin{equation}\label{eq: chinese remind 1}
\sum_{m=0}^{d-1}g_{j,m}(s)\cdot X^m\equiv \sum_{m=0}^{d_i-1}G_{i,j,m}(s)\cdot X^m\mod P_i(s,X),\quad i=1,\dots,n,
\end{equation}
hence
$$
\left(\sum_{m=0}^{d-1}g_{j,m}\cdot t^m\right)_{|\sD_{t}(a_i,r_i^-)}=\sum_{m=0}^{d_i-1}G_{i,j,m}(s)\cdot t^m,\quad i=1,\dots,n.
$$
Hence, with respect to corresponding bases $\{e_1,\dots,e_\fr\}$ and $\{e_1,\dots,t^{d-1}e_1,\dots,t^{d-1}e_\fr\}$, the action of $\Phi_r$ is given by 

\begin{align}\label{eq: Phi_r explicit general}
 m&\mapsto \Phi_r(m)\nonumber\\
 \sum_{j=1}^\fr\sum_{m=0}^{d-1}g_{j,m}(s)\cdot t^me_j&\mapsto \bigoplus_{i=1}^n\left(\sum_{j=1}^\fr\sum_{m=0}^{d_i-1}G_{i,j,m}(s)\cdot t^me_j\right).
\end{align}

In the other direction, to obtain $\Psi_r$, we note that given a collection of functions $G_{i,j,m}(s)$, one can recover $g_{j,m}(s)$ from relations \eqref{eq: chinese remind 1} thanks to Chinese reminder theorem. 

 \subsection{Radius of convergence and direct images}\label{sec: radius relations}\hfill\\
 
 We next use Corollary \ref{cor: iso diff} to study radius of convergence of horizontal solutions of the direct image $(M_\vphi,D_s)$ at a rational point $b$. Let us fix $r,R\in (0,1]$ with $r<R$, and let us put $\vphi^{-1}\big(\sD_s(b,r^-)\big)=\cup_{i=1}^{n}\sD_t(a_i',r_i^-)$ and $\vphi^{-1}\big(\sD_s(b,R^-)\big)=\cup_{i=1}^{n'}\sD_t(a_i'',R^-)$, both unions being disjoint. As usual, we assume that $\{a_1',\dots,a_n',a_1'',\dots,a_{n'}''\}\subset\vphi^{-1}(b)$.
 
 Let $\sD_t(a_{i,1}',r_{i,1}^-),\dots,\sD_t(a_{i,j(i)}',r_{i,j(i)}^-)$ be those discs among $\sD_t(a_i',r_i^-)$, $i=1,\dots,n$, that are contained in $\sD_t(a_i'',R_i^-)$.
 
 Then, we have the following commutative diagram of differential modules 
 
 \begin{diagram}[H]
\centering
\begin{tikzcd}
  \bigoplus\limits_{i=1}^{n'}\big(M\otimes \cO_t(a_i'',R_i^-)\big)_{\vphi_{|\sD_t(a_i'',R_i^-)}} \arrow[d,hook] \arrow[r, "\Psi_R"] & M_\vphi\otimes \cO_s(b,R^-) \arrow[d,hook,]\\
  \bigoplus\limits_{i=1}^{n}\big(M\otimes \cO_t(a_i',r_i^-)\big)_{\vphi_{|\sD_t(a_i',r_i^-)}} \arrow[r, "\Psi_r"]\arrow[d]& M_\vphi\otimes \cO_s(b,r^-)\arrow[d, "\id"]\\
  \bigoplus\limits_{i=1}^{n'}\bigoplus\limits_{l=1}^{j(i)}\big(M\otimes \cO_t(a_{i,l}',r_{i,l}^-)\big)_{\vphi_{|\sD_t(a_{i,l}',r_{i,l}^-)}} \arrow[r]& M_\vphi\otimes \cO_s(b,r^-)
\end{tikzcd}
 \end{diagram}
where the upper vertical arrows are restrictions and where the left lower vertical arrow comes from just rearranging the terms in the direct sum. 

Suppose now that $(m)_\vphi$ is a horizontal element of the direct image $(M_\vphi,D_s)$ at $b$ and that $\cR_b\big((m)_\vphi\big)\geq r$. Then, there are horizontal elements $m_{i,l}\in M\otimes \cO_t(a_{i,l}',r_{i,l}^-))$, $i=1,\dots,n'$, $l=1,\dots,j(i)$ such that 
$$
\big(\bigoplus\limits_{i=1}^{n'}\bigoplus\limits_{l=1}^{j(i)}m_{i,l}\big)_\vphi=(m)_\vphi.
$$
We remark that each $m_{i,l}$ has radius of convergence bigger than or equal to $r_{i,l}$. Similarly, if $\cR_b((m)_\vphi)\geq R$, then there are horizontal elements $m_i\in M\otimes \cO_t(a_i'',R_i^-)$, $i=1,\dots,n'$ such that 
$$
\big(\bigoplus\limits_{i=1}^{n'}m_i\big)_\vphi=(m)_\vphi.
$$
We further note that in this case that $m_{i,l}={m_i}_{|M\otimes \cO_t(a_{i,l}',r_{i,l}^-)}$, where the latter is the image of $m_i$ under restriction map (we will simply the restriction of $m_i$ to $\sD_t(a_{i,l}',r_{i,l}^-)$). In particular, each $m_{i,l}$ has radius of convergence at $a_{i,l}'$ bigger than or equal to $R_i$, and in particular bigger than $r_{i,l}$. 

In this way we proved

\begin{lemma}\label{lem: radius crit}
 The following are equivalent:
 \begin{enumerate}
 \item $\cR_{b}((m)_\vphi)=r$;
 \item For every $1\geq R>r$, there exists an $i=1,\dots,n'$ such that there does not exist a horizontal element  $m_i\in M\otimes \cO_t(a_i',R_i^-)$ such that its restriction to $\sD_t(a_{i,l}',r_{i,l}^-)$ for all $l=1,\dots,j(i)$ is $m_{i,l}$.
 \end{enumerate} 
\end{lemma}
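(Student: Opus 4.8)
The plan is to obtain the equivalence directly from Corollary \ref{cor: iso diff} and the commutative diagram preceding the statement, the assertion being essentially a repackaging of the computations carried out there. First I would record the elementary reformulation of the conclusion: since $\cR_b$ of a vector of power series is the minimum of the (capped at $1$) radii of its entries, the standing hypothesis $\cR_b\big((m)_\vphi\big)\ge r$ together with the equality $\cR_b\big((m)_\vphi\big)=r$ is equivalent to saying that $(m)_\vphi\in M_\vphi\otimes\cO_s(b,r^-)$ while, for every $R\in(r,1]$, $(m)_\vphi\notin M_\vphi\otimes\cO_s(b,R^-)$. I would also note the routine point that any such extension to radius $R$, if it exists, is automatically horizontal, because $D_s\big((m)_\vphi\big)$ vanishes after restriction to $\cO_s(b,r^-)$ and the restriction map $\cO_s(b,R^-)\hookrightarrow\cO_s(b,r^-)$ is injective; so one may speak freely of \emph{horizontal} extensions to radius $R$.

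Next, for a fixed $R\in(r,1]$, I would transport the question through the isomorphisms $\Phi_r$ and $\Phi_R$. Using $\Phi_R$ and the commutative diagram of Section \ref{sec: radius relations} — whose upper squares express that $\Psi_R$ and $\Psi_r$ intertwine the restriction maps, the remaining maps being rearrangements of direct summands and identities — a horizontal element of $M_\vphi\otimes\cO_s(b,R^-)$ extending $(m)_\vphi$ corresponds to a tuple $\bigoplus_{i=1}^{n'} m_i$ with $m_i\in M\otimes\cO_t(a_i'',R_i^-)$ horizontal and with $m_i$ restricting, on each subdisc $\sD_t(a_{i,l}',r_{i,l}^-)$ contained in $\sD_t(a_i'',R_i^-)$, to the element $m_{i,l}$ determined by $\Phi_r\big((m)_\vphi\big)=\bigoplus_{i,l} m_{i,l}$. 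Conversely, given such a tuple, applying $\Psi_R$ yields a horizontal element of $M_\vphi\otimes\cO_s(b,R^-)$ whose restriction to $M_\vphi\otimes\cO_s(b,r^-)$ is, by the same diagram, $\Psi_r\big(\bigoplus_{i,l}m_{i,l}\big)=(m)_\vphi$, hence an extension of $(m)_\vphi$ to radius $R$. Thus, for this fixed $R$, the existence of a horizontal extension of $(m)_\vphi$ to radius $R$ is equivalent to: for every $i=1,\dots,n'$ there is a horizontal $m_i\in M\otimes\cO_t(a_i'',R_i^-)$ restricting to $m_{i,l}$ for all $l=1,\dots,j(i)$.

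Combining the two steps gives the statement: $\cR_b\big((m)_\vphi\big)=r$ iff for every $R\in(r,1]$ no horizontal extension to radius $R$ exists, iff for every such $R$ there is at least one index $i$ for which the required $m_i$ fails to exist, which is condition (2). I do not expect a genuine obstacle here: the only real inputs — Corollary \ref{cor: iso diff} and commutativity of the diagram — are already in hand, and what remains is bookkeeping, namely injectivity of restriction on the disc algebras, the cap-at-$1$ normalization of $\cR_b$, and keeping straight the indexing relating the factorization of $P(s,X)$ at radius $r$ to the one at radius $R$ (i.e. which discs $\sD_t(a_i',r_i^-)$ sit inside which $\sD_t(a_i'',R_i^-)$).
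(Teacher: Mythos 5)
Your proposal is correct and takes essentially the same route as the paper: both arguments transport the question through the isomorphisms of Corollary \ref{cor: iso diff} at the two radii and read off from the commutative diagram of Section \ref{sec: radius relations} that a horizontal extension of $(m)_\vphi$ to radius $R$ corresponds exactly to a tuple of horizontal elements $m_i$ restricting to the $m_{i,l}$, after which one negates over all $R>r$. The extra bookkeeping you mention (automatic horizontality of extensions, the cap-at-$1$ convention, matching the factorizations at radii $r$ and $R$) is harmless and consistent with the paper's treatment.
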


\subsection{Explicit calculations with respect to bases}\hfill\\

\subsubsection{}\label{ssec: detail coordinates}

From now on, we fix a basis $\underline{e}:=\{e_1,\dots,e_\fr\}$ of $M$ and the corresponding basis $\ue_\vphi:=\{e_1,\dots,t^{d-1}e_1,e_2,\dots,e_\fr,\dots,t^{d-1}e_\fr\}$ of $M_\vphi$. We will write coordinates with respect to these basis as column vectors.


Let $r,R\in (0,1)$ be such that we have $\vphi^{-1}\big(\sD_s(b,r^-)\big)=\cup_{i=1}^n\sD_t(a_i',r_i^-)$ and  $\vphi^{-1}\big(\sD_s(b,R^-)\big)=\cup_{i=1}^d\sD_t(a_i,R_i^-)=:U$ (the unions, as usually, being disjoint). We clearly have $n\leq d$, where we recall that $d$ is the degree of $\vphi$. 


We note that we have the restriction morphism of differential modules
\begin{equation}\label{eq: res map}
M\otimes\bigoplus_{i=1}^n\cO_t(a_i',r_i^-)=\bigoplus_{i=1}^nM\otimes\cO_t(a_i',r_i^-)\to M\otimes\bigoplus_{i=1}^d\cO_t(a_i,R_i^-)=\bigoplus_{i=1}^dM\otimes\cO_t(a_i,R_i^-).
\end{equation}
%

Let $m\in \bigoplus_{i=1}^nM\otimes\cO_t(a_i',r_i^-)$. Then we can identify $m$ with $\vec{y'}(t)=\vec{y'}_1(t)\oplus\dots\oplus\vec{y'}_n(t)$, where $\vec{y'}_i(t)$ is a column vector of $\fr$ analytic functions which are coordinates of $m$ restricted to $M_{a_i',r_i^-}$ with respect to basis $\underline{e}$. Then, the image of $m$ under the map \eqref{eq: coord restr} can be identified with some $\vec{y}(t)=\vy_1(t)\oplus\dots\oplus\vy_d(t)$. We want to find the coordinates of $\big(\vec{y}(t)\big)_{\vphi_{|U}}$ with respect to basis $\ue_\vphi$.

For this, we write
\begin{equation}\label{eq: zero step}
\vec{y}(t)=\sum_{j=1}^\fr y_j(t)\cdot e_j, \quad \text{where}\quad y_j(t)=\oplus_{i=1}^dy_{j,i}(t)\in \oplus_{i=1}^d\cO_t(a_i,R_i^-).
\end{equation}
On the other side, we have $(\vec{y}(t))_{\vphi_{|U}}=\sum_{i=1}^d\sum_{j=1}^\fr a_{i,j}(s)\cdot t^{i-1}\cdot e_j$,  for some analytic functions $a_{i,j}(s)\in \cO_s(b,R^-)$ which are to be determined. In particular, we have
$$
\vec{y}(t)=\sum_{j=1}^\fr\left(\sum_{i=1}^da_{i,j}(s)\cdot t^{i-1}\right)\cdot e_j,
$$
that is,
\begin{equation}\label{eq: 1st step}
y_j(t)=\sum_{i=1}^d a_{i,j}(s)\cdot t^{i-1}.
\end{equation}
For each $l=1,\dots,d$, let $\phi_{a_l}:\oplus_{i=1}^d\cO_t(a_i,R_i^-)\to \cO_s(b,R^-)$ be a section of the morphism $\vphi$ at $b$ that corresponds to $a_l$, as in Lemma \ref{lem: sections}. Applying $\phi_{a_l}$ to \eqref{eq: 1st step} and using \eqref{eq: zero step} we obtain 
$$
\sum_{i=1}^da_{i,j}(s)\cdot u_{a_l}(s)^{i-1}=y_{j,l}\big(u_{a_l}(s)\big).
$$
We conclude 
$$
\bU(s)
\begin{bmatrix}
 a_{1,j}(s)\\
 a_{2,j}(s)\\
 \vdots\\
 a_{d,j}(s)
\end{bmatrix}
=\begin{bmatrix}
  y_{j,1}\big(u_{a_1}(s)\big)\\
    y_{j,2}\big(u_{a_2}(s)\big)\\
    \vdots\\
      y_{j,d}\big(u_{a_d}(s)\big)
 \end{bmatrix},
$$
where we put 
$$
\bU(s):=\begin{bmatrix}
 1 & u_{a_1}(s) & \dots & u_{a_1}(s)^{d-1}\\
 1 & u_{a_2}(s) & \dots & u_{a_2}(s)^{d-1}\\
 \vdots & \vdots & \ddots & \vdots \\
 1 & u_{a_d}(s) & \dots & u_{a_d}(s)^{d-1}
\end{bmatrix}.
$$
Being of Vandermonde type matrix $\bU(s)$ is invertible (at least locally in a neighborhood of point $b$) and if we  put $\bV(s):=\bU(s)^{-1}$ we finally obtain
$$
\begin{bmatrix}
 a_{1,j}(s)\\
 a_{2,j}(s)\\
 \vdots\\
 a_{d,j}(s)
\end{bmatrix}=\bV(s)\begin{bmatrix}
  y_{j,1}\big(u_{a_1}(s)\big)\\
    y_{j,2}\big(u_{a_2}(s)\big)\\
    \vdots\\
      y_{j,d}\big(u_{a_d}(s)\big)
 \end{bmatrix}.
$$

\begin{lemma}\label{lem: new coord}
 The element $(\vec{y}(t))_{\vphi_{|U}}$ expressed with respect to basis $\ue_\vphi$ is the vector
 $$
 \bV_\fr(s)\,\Big[y_{1,1}\big(u_{a_1}(s)\big),\dots,y_{1,d}\big(u_{a_d}(s)\big),y_{2,1}\big(u_{a_1}(s)\big),\dots,y_{r,d}\big(u_{a_d}(s)\big)\Big]^T,
 $$
 where $\bV_\fr(s)=\bigoplus_{i=1}^\fr\bV(s)$, where the latter is the $\fr$-fold direct sum (in the sense of matrices) of $\bV(s)$ with itself.  
 \end{lemma}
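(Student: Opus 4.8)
The strategy is that the substance of the statement has already been produced by the computation in \ref{ssec: detail coordinates}; what remains is to collect the $\fr$ separate vector identities derived there into one block-matrix identity, once the coordinate ordering convention for $\ue_\vphi$ is made explicit. So first I would simply recall what was established: writing $(\vec y(t))_{\vphi_{|U}}=\sum_{i=1}^d\sum_{j=1}^\fr a_{i,j}(s)\,t^{i-1}e_j$ and applying, for each $l=1,\dots,d$, the section $\phi_{a_l}$ of $\vphi$ at $b$ from Lemma \ref{lem: sections} (which satisfies $\phi_{a_l}^\#(t)=u_{a_l}(s)$) to the identity $y_j(t)=\sum_{i=1}^d a_{i,j}(s)\,t^{i-1}$ and using $y_j(t)=\oplus_{i=1}^d y_{j,i}(t)$, one gets $\sum_{i=1}^d a_{i,j}(s)\,u_{a_l}(s)^{i-1}=y_{j,l}\big(u_{a_l}(s)\big)$ for $l=1,\dots,d$, i.e. $\bU(s)\,[a_{1,j}(s),\dots,a_{d,j}(s)]^T=[y_{j,1}(u_{a_1}(s)),\dots,y_{j,d}(u_{a_d}(s))]^T$, hence $[a_{1,j}(s),\dots,a_{d,j}(s)]^T=\bV(s)\,[y_{j,1}(u_{a_1}(s)),\dots,y_{j,d}(u_{a_d}(s))]^T$, valid for each $j\in\{1,\dots,\fr\}$.

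Next I would spell out the bookkeeping. With respect to $\ue_\vphi=\{e_1,te_1,\dots,t^{d-1}e_1,e_2,\dots,t^{d-1}e_\fr\}$, the coordinate column vector of $(\vec y(t))_{\vphi_{|U}}$ is, by construction, the concatenation of the blocks $(a_{1,j}(s),\dots,a_{d,j}(s))$ as $j$ runs from $1$ to $\fr$. Stacking the $\fr$ identities of the previous paragraph and using that $\bV_\fr(s)=\bigoplus_{i=1}^\fr\bV(s)$ acts on such a concatenated vector block by block, this coordinate vector equals $\bV_\fr(s)$ applied to the concatenation of the blocks $(y_{j,1}(u_{a_1}(s)),\dots,y_{j,d}(u_{a_d}(s)))$, $j=1,\dots,\fr$, which is precisely the vector $\big[y_{1,1}(u_{a_1}(s)),\dots,y_{1,d}(u_{a_d}(s)),y_{2,1}(u_{a_1}(s)),\dots,y_{\fr,d}(u_{a_d}(s))\big]^T$ in the statement. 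This gives the claimed formula.

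There is no genuine obstacle; the only points needing (mild) care are the invertibility of $\bU(s)$ in a neighbourhood of $b$ — which holds because $\bU$ is of Vandermonde type with entries $u_{a_1}(b)=a_1,\dots,u_{a_d}(b)=a_d$ pairwise distinct, these being the $d$ distinct preimages of $b$ arising from the decomposition $\vphi^{-1}(\sD_s(b,R^-))=\cup_{i=1}^d\sD_t(a_i,R_i^-)$ — and keeping the indexing of $\ue_\vphi$ rigidly consistent so that the block structure of $\bV_\fr(s)$ lines up with the blocks $(a_{\bullet,j})$. Everything else is a direct transcription of identities already in hand.
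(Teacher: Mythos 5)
Your proposal is correct and takes essentially the same route as the paper: the lemma there is simply the packaging of the computation in Paragraph \ref{ssec: detail coordinates} (apply the sections $\phi_{a_l}$ of Lemma \ref{lem: sections} to the identity $y_j(t)=\sum_{i=1}^d a_{i,j}(s)\cdot t^{i-1}$, obtain the Vandermonde system, invert $\bU(s)$, and stack the $\fr$ blocks to get $\bV_\fr(s)$). The two points you single out --- local invertibility of $\bU(s)$ near $b$, because the values $u_{a_i}(b)=a_i$ are pairwise distinct, and the rigid block ordering of $\ue_\vphi$ --- are exactly the ingredients the paper uses.
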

 
 \begin{rmk}
  We note that in concrete terms we have
 $$
 \bV_\fr(s)=\begin{bmatrix}
 \uV & \underline{0}_{d,d}& \dots &\underline{0}_{d,d}\\
 \underline{0}_{d,d}& \uV & \dots & \underline{0}_{d,d}\\
 \vdots & \vdots & \ddots & \vdots\\
 \underline{0}_{d,d} & \underline{0}_{d,d} & \dots & \uV
\end{bmatrix},
 $$
 is an $\fr\cdot d\times \fr\cdot d$ matrix where we write $\underline{0}_{i,j}$ for $i\times j$ block of zeroes, and $\uV$ for the block of entries of $\bV(s)$.
 \end{rmk}

\subsection{Basis for the space of horizontal elements for direct image}\label{ssec: horizontal}\hfill\\

 We keep the setting and notation as in Sections \ref{ssec: setting} and \ref{ssec: setting cont}. 

\begin{thm}\label{thm: fund sol}
 Let $\vec{Y}_{a_i,1}(t),\dots,\vec{Y}_{a_i,\fr}(t)$ be a basis of $M^{D_t}_{a_i}$ and let $\mathbf{Y}_i(t)$ be $\fr\times \fr$ matrix whose $j$-th column is vector $\vec{Y}_{a_i,j}(t)$. Then, a basis for the space of horizontal elements of $(M_{\vphi},D_s)$ at $b$ is given by the column vectors of the matrix
 \begin{equation}\label{eq: fund sol}
 \bV_\fr(s)\, \bigoplus_{i=1}^d\mathbf{Y}_i(u_{a_i}(s)),
 \end{equation}
 where $\bigoplus_{i=1}^d$ denotes the direct sum of the matrices involved.
\end{thm}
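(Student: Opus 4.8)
The plan is to transport the description of horizontal elements on the source discs to the target disc via the isomorphism $\Phi_r$ (equivalently $\Psi_r$) of Corollary \ref{cor: iso diff}, and then to read off coordinates using Lemma \ref{lem: new coord}. First I would fix $r$ small enough that $\vphi^{-1}\big(\sD_s(b,r^-)\big)=\cup_{i=1}^d\sD_t(a_i,r_i^-)$ is a disjoint union of $d$ discs each isomorphic to $\sD_s(b,r^-)$ (possible since, even if $b$ were branching in the global sense, over a small enough target disc the preimage splits into $d$ discs because $k$ is algebraically closed and $a_1,\dots,a_d$ are the $d$ preimages counted with multiplicity — here they are the preimages of $b$ as points of $\sD_t^\pm(k)$); shrinking further I also arrange $r_i<\cR_1\big(a_i,(M,D_t)\big)$ for each $i$, so that each $\vec Y_{a_i,j}(t)$ is a genuine horizontal element of $M\otimes\cO_t(a_i,r_i^-)$. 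By Remark \ref{rmk: bijection horizontal} and Corollary \ref{cor: iso diff}, the $K$-vector space $(M_\vphi)^{D_s}_b$ is $\Psi_r$ applied to $\big(\bigoplus_{i=1}^d (M\otimes\cO_t(a_i,r_i^-))_{\vphi_{|\sD_t(a_i,r_i^-)}}\big)^{(D\otimes d_t)_{\vphi}}$, and since $\Psi_r$ is an isomorphism of differential modules this target space of horizontal elements is just the direct sum over $i$ of the horizontal elements of each summand. Because the section-twisted derivation $(D\otimes d_t)_\vphi$ acts on the $i$-th summand as $\tfrac{1}{f'(t)}D_t$ and the substitution $t\mapsto u_{a_i}(s)$ is exactly the chain-rule-compatible base change of Lemma \ref{lem: ext restr} / \ref{lem: sections}, the horizontal elements of the $i$-th summand are precisely spanned by the $\fr$ vectors obtained from $\vec Y_{a_i,1},\dots,\vec Y_{a_i,\fr}$ by composing with $u_{a_i}(s)$. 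Hence a basis of $(M_\vphi)^{D_s}_b$ consists of the $\fr d$ elements indexed by $(i,j)$ whose image under $\Phi_r$ lands entirely in the $i$-th summand and equals $\vec Y_{a_i,j}(u_{a_i}(s))$ there and $0$ elsewhere.

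Next I would compute the coordinates of these basis elements with respect to the fixed basis $\ue_\vphi$ of $M_\vphi$. This is exactly the situation of Section \ref{ssec: detail coordinates}: the element of $\bigoplus_i M\otimes\cO_t(a_i,r_i^-)$ whose $i$-component is $\vec Y_{a_i,j}(t)$ and whose other components are $\vec Y_{a_{i'},j'}(t)$ gets identified, after the map \eqref{eq: res map} and the section computation, with the vector whose coordinates are given by Lemma \ref{lem: new coord}. Running this for the full basis $\{(i,j)\}$ simultaneously, the block-diagonal matrix $\bigoplus_{i=1}^d\mathbf Y_i(u_{a_i}(s))$ assembles the vectors $y_{j,l}\big(u_{a_l}(s)\big)$ in the correct positions, and left multiplication by $\bV_\fr(s)=\bigoplus_{i=1}^\fr\bV(s)$ produces exactly the coordinate vectors of Lemma \ref{lem: new coord}. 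Therefore the columns of $\bV_\fr(s)\,\bigoplus_{i=1}^d\mathbf Y_i(u_{a_i}(s))$ are the coordinate vectors of a $K$-basis of $(M_\vphi)^{D_s}_b$, which is the claim; I should also note these columns are visibly $K$-linearly independent because $\bV_\fr(s)$ is invertible and $\bigoplus_i\mathbf Y_i(u_{a_i}(s))$ has invertible constant term (each $\mathbf Y_i$ does, being a fundamental matrix of an ODE at a point in its disc of convergence, and $u_{a_i}(b)=a_i$).

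The main obstacle I expect is the bookkeeping around the two different disc radii and the section formalism: one must be careful that the identification $\Phi_r$ respects the fixed basis $\ue_\vphi$ in the way written in \eqref{eq: Phi_r explicit general}, that the substitution $t\mapsto u_{a_i}(s)$ really is the map induced by the section $\phi_{a_i}$ of Lemma \ref{lem: sections} (so that composition with $u_{a_i}$ genuinely lands in $\cO_s(b,r^-)$ and is horizontal for the twisted derivation), and that shrinking $r$ does not lose any horizontal elements — this last point uses the compatibility of restriction maps exactly as in the commutative diagram preceding Lemma \ref{lem: radius crit}, together with the $p$-adic Cauchy bound guaranteeing that the dimension of horizontal elements is already $\fr d$ on any sufficiently small target disc. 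None of these steps is deep, but the indices must be lined up precisely; once they are, the theorem is an immediate consequence of Corollary \ref{cor: iso diff} and Lemma \ref{lem: new coord}.
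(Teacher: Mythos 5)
Your proposal is correct and follows essentially the same route as the paper's proof: both rest on the splitting of Corollary \ref{cor: iso diff} (together with Remark \ref{rmk: bijection horizontal}) and the coordinate translation of Lemma \ref{lem: new coord}, plus a linear-independence check using the invertibility of $\bV_\fr(s)$. The only cosmetic difference is that you get spanning directly from the isomorphism of horizontal spaces, while the paper concludes from having $\fr\cdot d$ linearly independent horizontal elements; in substance it is the same argument.
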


\begin{proof}
 It is easy to see that the columns of $\bigoplus_{i=1}^d\mathbf{Y}_i(t)$ are linearly independent which implies that columns of \eqref{eq: fund sol} are linearly independent as well since $\bV(s)$ (and then $\bV_\fr(s)$) is invertible. 
 
 In light of Lemma \ref{lem: new coord} it is enough to prove that columns of $\bigoplus_{i=1}^d\mathbf{Y}_i(t)$ are horizontal elements of $\big(M\otimes\oplus_{i=1}^d\cO_t(a_i,r_i^-),D_t\otimes\oplus_{i=1}^d d_t\big)$ which can be checked directly. 
\end{proof}

\subsection{Basis for direct image of a trivial $p$-adic differential module}\label{sec: trivial}\hfill\\

It is worth noting a special and the most simple case of Theorem \ref{thm: fund sol}.

\begin{defn}
Let $T:=\cO_t^\pm \cdot e$ and $D_t$ a derivation on $T$ given by $D_t(e)=0 \cdot e$. We say that $(T,D_t)$ is a trivial differential module (of rank $1$). 
\end{defn}
Then, obviously, $1\cdot e$ is horizontal element for $(T,D_t)$ and for any $\sD_t(a,r^-)\subseteq \sD_t^\pm$, $1\,e_{|\sD_t(a,r^-)}$ is the basis for the space of horizontal elements of $M_{a,r^-}$. In particular, the basis for $T^{D_t}_a$ at any $a\in \sD_t^\pm(k)$ is given by (the restriction of) $1\cdot e$. 

Theorem \ref{thm: fund sol} then implies
\begin{cor}\label{cor: sol trivial}
 A basis of $(T_\vphi)_b^{D_s}$ is given by the columns of the matrix $\bV(s)$.
\end{cor}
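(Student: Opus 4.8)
The plan is to obtain the corollary as the rank-one specialization of Theorem~\ref{thm: fund sol}. First I would record that the trivial module $T$ has rank $\fr=1$ and that for every preimage $a_i\in\vphi^{-1}(b)$ the space of horizontal elements $T_{a_i}^{D_t}$ is one-dimensional, spanned by the restriction of $1\cdot e$. Hence, in the notation of Theorem~\ref{thm: fund sol}, the $\fr\times\fr=1\times 1$ matrix $\mathbf{Y}_i(t)$ whose single column is this horizontal element is the constant matrix $[1]$, so $\mathbf{Y}_i\big(u_{a_i}(s)\big)=[1]$ for each $i=1,\dots,d$, and therefore $\bigoplus_{i=1}^d\mathbf{Y}_i\big(u_{a_i}(s)\big)$ is the $d\times d$ identity matrix.

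Next, since $\fr=1$, the block matrix $\bV_\fr(s)=\bigoplus_{i=1}^\fr\bV(s)$ reduces to $\bV(s)$ itself, which is the $d\times d$ inverse of the Vandermonde matrix $\bU(s)$ built from the solutions $u_{a_1}(s),\dots,u_{a_d}(s)$ of $P(s,X)=0$ at $b$. Substituting these two facts into formula~\eqref{eq: fund sol}, the basis for the space of horizontal elements of $(T_\vphi,D_s)$ at $b$ produced by Theorem~\ref{thm: fund sol} is given by the columns of $\bV(s)\cdot I_d=\bV(s)$, which is the assertion. As a sanity check, $T_\vphi$ has rank $\fr\cdot d=d$, and the $p$-adic Cauchy theorem together with Remark~\ref{rmk: bijection horizontal} gives $\dim_k (T_\vphi)_b^{D_s}=d$, consistent with $\bV(s)$ having $d$ columns.

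There is no real obstacle here: the only point worth keeping in mind is the identification — already used in the proof of Theorem~\ref{thm: fund sol} via Lemma~\ref{lem: new coord} — of a horizontal element of $(T_\vphi)_b$ with its coordinate column vector relative to the basis $\{e,te,\dots,t^{d-1}e\}$ of $T_\vphi$. Once that bookkeeping is in place the corollary is a pure substitution of $\fr=1$ and $\mathbf{Y}_i=[1]$ into the theorem.
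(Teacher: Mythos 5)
Your proposal is correct and follows exactly the route the paper takes: the paper also deduces the corollary from Theorem~\ref{thm: fund sol} by noting that for the trivial module the horizontal basis at each $a_i$ is the restriction of $1\cdot e$, so each $\mathbf{Y}_i$ is the constant $1\times 1$ matrix and the formula~\eqref{eq: fund sol} collapses to $\bV(s)$. Your extra bookkeeping about $\fr=1$, the identity block, and the dimension count is fine and adds nothing problematic.
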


\subsection{Linked optimal bases}\hfill\\

We continue Section \ref{ssec: horizontal}, and keep the notation from Theorem \ref{thm: fund sol}. That is, we have for each $i=1,\dots,d$, $\vec{Y}_{a_i,1}(t),\dots,\vec{Y}_{a_i,\fr}$ a basis of horizontal elements of $(M,D_t)$ at $a_i$. Suppose further that it is optimal and that $\cR_{a_i}\big(\vec{Y}_{a_i,1}(t)\big)\leq\dots\leq\cR_{a_i}\big(\vec{Y}_{a_i,r}(t)\big)$.

\begin{defn}\label{defn: linked}
 We say that the optimal bases $\vec{Y}_{a_i,1}(t),\dots,\vec{Y}_{a_i,\fr}(t)$, $i=1,\dots,d$ are \emph{linked} if for each $i,j,l$ such that $|a_j-a_i|<\cR_{a_i}\big(\vec{Y}_{a_i,l}(t)\big)$, $\vec{Y}_{a_i,l}(t)$ is equal to some of $\vec{Y}_{j,1}(t),\dots,\vec{Y}_{j,\fr}(t)$.
\end{defn}

\begin{lemma}
 Linked optimal bases exist.
\end{lemma}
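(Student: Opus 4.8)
The plan is to construct linked optimal bases by a downward induction on radii of convergence, choosing bases at all the preimages $a_1,\dots,a_d$ of $b$ simultaneously rather than independently. The point of the linkedness condition is that whenever two preimages $a_i,a_j$ are so close that a horizontal element at $a_i$ with large radius $r=\cR_{a_i}(\vec Y_{a_i,l})>|a_j-a_i|$ actually converges on a disc $\sD_t(a_i,r^-)$ containing $a_j$, that element is \emph{also} a horizontal element of $(M,D_t)$ at $a_j$ (its restriction to $\sD_t(a_j,r^-)$ lies in $M_{a_j}^{D_t}$, with radius at $a_j$ at least $r$, hence, since $r$ can only increase, exactly the same), and we want it literally to be one of the chosen basis vectors at $a_j$ as well.

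The key combinatorial observation is that the relation ``$\vec Y$ converges on a disc around $a_i$ that also contains $a_j$'' is governed by the tree $\T_{\vphi,b}$: if $\vec Y$ is a horizontal element at $a_i$ with radius $r$, it ``spreads'' to exactly those $a_j$ lying in the disc $\sD_t(a_i,r^-)$, and this disc is a connected component determined by the branching structure. So I would work radius by radius, from the largest possible radius $1$ downward through the finitely many relevant values (the branching radii together with the radii appearing in the multiradii $\cR_{a_i}$). At each radius level $r$, group the preimages into clusters according to the equivalence relation ``$a_i$ and $a_j$ lie in a common disc of radius $<r$'' — equivalently, the connected components of the complement in $\sD_t^\pm$ of the part of $\T_{\vphi,b}$ at radius $\ge r$. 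For each cluster, the horizontal elements of radius $\ge r$ that are defined on the whole enclosing disc are the same $k$-vector space $V_r$ for every $a_i$ in the cluster (by the $p$-adic Cauchy theorem / the analytic continuation discussed around Lemma~\ref{lem: radius crit}, applied to the disc containing the cluster). The recipe is: pick, compatibly with the already-chosen vectors at larger radii, a basis of $V_r$ that extends a basis of the previously handled larger-radius subspace, and assign it \emph{to every} $a_i$ in the cluster. Then pass to the next smaller radius, where clusters refine; inside each refined cluster choose additional vectors (now only required to converge on the smaller enclosing disc) extending what is already there, and again assign them uniformly across the refined cluster.

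To see that the bases so produced are optimal at each $a_i$, I would invoke the optimality criterion Lemma~\ref{lem: optimal criterion}(2): a basis of $M_{a_i}^{D_t}$ is optimal iff every nontrivial $k$-linear combination of basis vectors sharing a common radius $r$ still has radius exactly $r$. By construction, the vectors assigned at level $r$ to the cluster containing $a_i$ span, together with the higher-level vectors, precisely the space of horizontal elements at $a_i$ of radius $\ge r$; and the filtration by radius that we build is exactly the filtration $M_{a_i,r^-}^{D_t}$. A nontrivial combination of the level-$r$ vectors (modulo higher levels) cannot lie in a strictly larger-radius space, for otherwise it would be in the span of the strictly-higher-level part, contradicting that we chose the level-$r$ vectors to \emph{extend} a basis of that part. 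Hence the radius of such a combination is exactly $r$, giving optimality. Linkedness is then immediate from the construction: if $|a_j-a_i|<\cR_{a_i}(\vec Y_{a_i,l})=:r$, then $a_j$ lies in the disc $\sD_t(a_i,r^-)$, so $a_i$ and $a_j$ are in the same cluster at every level $\ge r$, hence at the level where $\vec Y_{a_i,l}$ was introduced, so $\vec Y_{a_i,l}$ was assigned to $a_j$ as well and is one of $\vec Y_{a_j,1},\dots,\vec Y_{a_j,\fr}$.

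The main obstacle I anticipate is bookkeeping the ``extends/is compatible'' conditions coherently across \emph{all} clusters and all radius levels at once: when a cluster splits as the radius decreases, the common part that all sub-clusters inherit must be the intersection of what was assigned above, and one must check that the larger-radius vectors assigned to the parent cluster really do remain horizontal (with the same radii) at every $a_i$ in each child cluster — this is where one uses that restriction of a horizontal element to a smaller sub-disc is horizontal and does not decrease the radius, so the radius is unchanged. A minor point to verify carefully is that the set of relevant radii is finite (each multiradius is a finite tuple and there are finitely many branching radii by Lemma~\ref{lem: sum of branches} and the remark after it), so the induction terminates; and that at the bottom level $r\to 0^+$ every horizontal element at $a_i$ has been accounted for, which is just $\dim_k M_{a_i}^{D_t}=\fr$ from the $p$-adic Cauchy theorem. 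None of these steps requires more than the results already assembled in the excerpt, so the proof is essentially an organized induction.
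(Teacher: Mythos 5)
Your proof is correct, but it is organized along a genuinely different route than the paper's. The paper argues greedily, point by point: fix any optimal basis at $a_1$, transport to each $a_j$ exactly those basis vectors whose radius exceeds $|a_1-a_j|$, note that no horizontal element at $a_j$ of radius $>|a_1-a_j|$ can be linearly independent of the transported ones (it would be horizontal at $a_1$ as well, contradicting optimality there), complete to an optimal basis at $a_j$, and iterate while keeping all earlier choices intact. You instead run a single descending induction over the finitely many relevant radii and choose, uniformly on each cluster of preimages, a basis of the common space of horizontal elements on the common disc extending the vectors already chosen at higher levels; optimality is then read off from the fact that the resulting basis at each $a_i$ is adapted to the filtration by radius (your appeal to Lemma~\ref{lem: optimal criterion}), and linkedness is automatic. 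Both arguments rest on the same two facts --- horizontal germs of radius exceeding $|a_i-a_j|$ are literally shared by $a_i$ and $a_j$, and a basis adapted to the radius filtration is optimal --- but your simultaneous cluster construction makes the compatibility of choices, which the paper settles with the brief remark that ``the process is compatible'', completely explicit, at the price of heavier bookkeeping. Two details you should state explicitly in a final write-up: first, that each vector introduced at level $r$ has radius exactly $r$ (this identifies the sets $J_r$ in Lemma~\ref{lem: optimal criterion}(2) and is what your linkedness argument uses); it holds precisely because every component of every multiradius lies in your processed set of radii, so the filtration at each $a_i$ has no jump strictly between consecutive processed levels. Second, that at the lowest relevant level the chosen vectors exhaust $M_{a_i}^{D_t}$, i.e.\ that all $\fr$ independent horizontal germs at $a_i$ already converge on the open disc whose radius is the smallest component of the multiradius at $a_i$; this attainment of the supremum follows from the finite-dimensionality of the space of horizontal germs and deserves a sentence.
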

\begin{proof}
 We start with say $a_1$ and consider any optimal base $\vec{Y}_{1,1}(t),\dots,\vec{Y}_{1,\fr}(t)$ (with non-decreasing radii of convergence) at $a_1$. For each $j$ such that there exists some $l$ with $|a_1-a_j|<\cR_{a_1}\big(\vec{Y}_{1,l}(t)\big)$, $\vec{Y}_{1,l}(t)$ is also a horizontal element of $(M,D_t)$ at $a_j$, and so are all $\vec{Y}_{1,l+1}(t),\dots,\vec{Y}_{1,\fr}(t)$. In that case we simply take $\vec{Y}_{j,l}(t):=\vec{Y}_{1,l}(t)$. 
 
 Let $l'$ be the minimal such that $|a_1-a_j|<\cR_{a_1}\big(\vec{Y}_{1,l'}(t)\big)$. Then, we note that there exists no horizontal element $\vec{Y}(t)$ at $a_j$ such that $|a_1-a_j|<\cR_{a_j}\big(\vec{Y}(t)\big)$ and which is linearly independent with $\vec{Y}_{1,l'}(t),\dots,\vec{Y}_{1,r}(t)$. Indeed, such an element would also be a horizontal element at $a_1$ which would contradict the optimality of the chosen basis. 
 
 This shows that the process is compatible, that is, we may continue it by choosing some $j_0$ for which we have constructed part of the optimal basis in the first step, and complete this part to a full optimal basis. Then, we continue by repeating the first step at $a_{j_0}$ instead of $a_1$ and leaving all the parts of optimal basis constructed in the first step intact. 
 
 Finally, we do the same for the remaining points (that is the ones for which we did not construct part of the optimal basis in the steps above).  
\end{proof}

\section{Optimal bases}\hfill\\

%
%

\subsection{Trivial differential module}\hfill\\

We recall that we fix $\{e_1,\dots,e_\fr\}$ a basis for $M$ and $\{e_1,\dots,t^{d-1}e_\fr\}$ a basis for $M_\vphi$. Hence all elements of the corresponding modules will be identified with the column vectors of their coordinates with respect to these bases. 

Further, let $0<r_1<\dots<r_n\leq 1$ be the branching radii of branching points of $\T_{\vphi,b}$ (see Definition \ref{defn: branching radii}).

\begin{defn}
Let $\cU\subset \sD_t^\pm$ be an open disc. We denote by $v_\cU$ the column vector of dimension $d$ whose $i$-th component is 1 if $a_i\in \cU(k)$, and 0 otherwise.
\end{defn}

\begin{rmk}\label{rmk: bigger disc}
For each open disc $\cU\subseteq \sD_t^\pm$ one can find another open disc $\cU'\subseteq \sD_t^\pm$ such that either $\cU'=\sD_t^-$, $\eta_{0,1}$ is not branching and $v_{\cU}=v_{\cU'}$, either $\cU'$ is a branch at some $\eta\in \cB_{\vphi,b}$ and  $v_{\cU}=v_{\cU'}$. 
\end{rmk}

\begin{lemma}\label{lem: radius exactly}
Let $\eta\in \cB_{\vphi,b}(r_i)$ and let $\cU\in \cB r_\eta$. 

 Then, $\cR_b(\bV(s)\, v_\cU)=r_i$. Furthermore, $\bV(s)\, [1,\dots,1]^T=E_1:=[1,0,\dots,0]^T$.
\end{lemma}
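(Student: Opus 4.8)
The plan is to reduce everything to the description of $\bV(s)$ as the inverse Vandermonde matrix built from the solutions $u_{a_1}(s),\dots,u_{a_d}(s)$ of $P(s,X)=0$ at $b$, and to Corollary \ref{cor: sol trivial}, which already identifies the columns of $\bV(s)$ with a basis of horizontal elements of the trivial direct image $(T_\vphi)^{D_s}_b$. First I would prove the last assertion: since $u_{a_1}(s)^0=\dots=u_{a_d}(s)^0=1$, the vector $[1,\dots,1]^T$ is exactly the first column of $\bU(s)$, so $\bV(s)\,[1,\dots,1]^T=\bU(s)^{-1}\bU(s)\,e_1=E_1$. That disposes of the ``furthermore'' immediately.

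For the main claim, observe that if $\cU\in \cB r_\eta$ is a branch at $\eta\in\cB_{\vphi,b}(r_i)$, then $v_\cU$ records precisely which preimages $a_j$ lie in $\cU$, and by Remark \ref{rmk: bigger disc} we may assume $\cU$ is such a branch (so $v_\cU$ is the honest indicator vector of $\cU\cap\vphi^{-1}(b)$). Applying Lemma \ref{lem: new coord} / Theorem \ref{thm: fund sol} in the rank-one trivial case, with the horizontal section $1\cdot e$ taken on those $a_j\in\cU$ and $0$ elsewhere, the vector $\bV(s)\,v_\cU$ is the coordinate vector of the horizontal element $(\vec y(t))_{\vphi_{|U}}$ of $(T_\vphi,D_s)$ at $b$ whose restriction to $\sD_t(a_j,\cdot)$ is the constant $1$ for $a_j\in\cU$ and $0$ for $a_j\notin\cU$. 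Thus $\bV(s)\,v_\cU$ is genuinely horizontal, and its radius of convergence is governed by Lemma \ref{lem: radius crit}: $\cR_b(\bV(s)\,v_\cU)\geq R$ iff the collection of local solutions $\{1 \text{ on } a_j\in\cU,\ 0 \text{ on } a_j\notin\cU\}$ extends to a horizontal element over each connected component of $\vphi^{-1}(\sD_s(b,R^-))$.

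Now I would run the geometric argument. For $R\le r_i$ small enough, $\eta$ is not yet in any component of $\vphi^{-1}(\sD_s(b,R^-))$, those components separate $\cU\cap\vphi^{-1}(b)$ from its complement, and on each component the required local datum is constant ($1$ or $0$), hence extends; so $\cR_b(\bV(s)\,v_\cU)\ge r_i$. Conversely, for any $R>r_i$, the branch $\cU$ and at least one other branch $\cU'$ at $\eta$ merge inside a single component $\sD_t(a'',R''^-)$ of $\vphi^{-1}(\sD_s(b,R^-))$; since $\cU\in\cB r_\eta$ means $\cU$ does not contain all of $\cU$'s ``sibling'' branches (there are $\delta(\eta)\ge 2$ of them and at least one, corresponding to some $a_j\notin\cU$, is absent), the prescribed datum on that component would have to be simultaneously $\equiv 1$ on the part coming from $\cU$ and $\equiv 0$ on the part coming from $\cU'$ — impossible for a horizontal (here: locally constant) section of the rank-one trivial module on a connected disc. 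Hence no extension exists for $R>r_i$, so by Lemma \ref{lem: radius crit} we get $\cR_b(\bV(s)\,v_\cU)=r_i$.

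The step I expect to be the main obstacle is the converse bound: matching the abstract obstruction in Lemma \ref{lem: radius crit} with the concrete combinatorics of branches at $\eta$, i.e. being careful that ``$\cU\in\cB r_\eta$'' guarantees the existence of a sibling branch $\cU'$ not contained in $\cU$ lying over the same disc $\vphi(\cU)=\sD_s(b,r_i^-)$, so that the two get identified one level up and force the contradiction. I would handle this using Lemma \ref{lem: sum of branches} and the fact (stated after Definition \ref{defn: branching radii}) that $r_s(\vphi(\eta))=r_i$ is exactly the radius of $\vphi(\cU)$ for every branch $\cU$ at $\eta$, so all $\delta(\eta)$ branches at $\eta$ map onto the \emph{same} disc $\sD_s(b,r_i^-)$ and therefore reunite precisely at radius $R>r_i$.
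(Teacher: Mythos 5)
Your proof is correct and follows essentially the same route as the paper: you realize $\bV(s)\,v_\cU$ as the direct image of the horizontal element that is $1\cdot e$ on the branch $\cU$ (a full connected component of $\vphi^{-1}(\sD_s(b,r_i^-))$) and $0$ on the others, getting the lower bound, and then use Lemma \ref{lem: radius crit} together with the fact that for $R>r_i$ the branch $\cU$ merges with a sibling branch containing a preimage where the datum is $0$, exactly as in the paper. Your observation that $[1,\dots,1]^T$ is the first column of $\bU(s)$ even gives the ``furthermore'' more directly than the paper's degree argument on the polynomial $(g_1(s)-1)+g_2(s)X+\dots+g_d(s)X^{d-1}$.
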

\begin{proof}
 Let us prove that $E_1=\bV(s)\, [1,\dots,1]^T$. For this, let us put 
 $$
 \bV(s)\, [1,\dots,1]^T=:[g_1(s),\dots,g_d(s)]^T.
 $$
 Then, 
 $$
 \bU(s)\, [g_1(s),\dots,g_d(s)]^T=[1,\dots,1]^T,
 $$
 which implies that the polynomial $(g_1(s)-1)+g_2(s)\cdot X+\dots+g_d(s)\cdot X^{d-1}$ has zeroes in $u_{a_1}(s),$ $\dots,u_{a_d}(s)$. Since it is of degree $d-1$, it must be identically zero, hence $g_1(s)=1$ and $g_2(s)=\dots=g_d(s)=0$.
 
 By our choice of $\cU$, we have that $\vphi(\cU)=\sD_s(b,r_i^-)$ and if we put $\vphi^{-1}\big(\sD_s(b,r_i^-)\big)=\cup_{j=1}^n\sD_t(a_j',q_j^-)$, then $\cU$ is equal to one of the components $\sD_t(a_j',q_j^-)$. 
 
 Now, let $A_\cU\in\bigoplus_{j=1}^l\cO_t(a'_j,q_j^-)$ such that its restriction to $\cU$ is 1, while its restriction to any other connected component of $\vphi^{-1}(\sD_s(b,r_i^-))$ is 0. Note that $A_\cU$ is a horizontal element of $T\otimes \oplus_{j=1}^l\cO_t(a_j',q_j^-)$ hence gives rise to a horizontal element of $(M_\vphi)_{b,r_i^-}$. Its image by the map \eqref{eq: res map} can be identified with $v_\cU$ hence the corresponding horizontal element in the direct image is precisely $\bV(s)\, v_\cU$ as Lemma \ref{lem: new coord} suggests. We conclude that $\cR_b(\bV(s)\, v_\cU)\geq r_i$. If $r_i=1$ we are done, so suppose that $r_i<1$, in which case also $q_j<1$. 
 
 We note that for any $R>q_j$, $R\in (0,1)$ the restriction of $1\cdot e$ to $\sD_t(a_j',R^-)$ is a basis of horizontal elements of $M_{a_j',R^-}$ and it is the only horizontal element whose restriction to $\sD_t(a_j',q_j^-)$ is again $1e$. However, since $\eta$ is a branching point and $\eta\in \sD_t(a_j',R^-)$, there is always some $\sD_t(a_l',q_l^-)$ which is a connected component of $\vphi^{-1}(\vphi(U))$ different from $\sD_t(a_j',q_j^-)$ and such that  $\sD_t(a_l',q_l^-)\subset \sD_t(a_j',R^-)$. In particular the restriction of $1\cdot e$ to this disc is different from 0. Lemma \ref{lem: radius crit} then implies that $\cR_b(\bV(s)\, v_\cU)= r_i$.
 \end{proof}

\begin{thm}\label{thm: opt basis trivial}
 For each $i=1,\dots,n$ and $\eta\in\cB_{\vphi,b}(r_i)$, let $\sU_\eta$ denote any set of $\delta(\eta)-1$ branches at $\eta$. Let further 
 $$
 \Pi_i:=\{\bV(s)\, v_\cU\mid \cU\in \sU_{\eta},\quad \eta \in \cB_{\vphi,b}(r_i)\}.
 $$
Then,
\begin{enumerate}
 \item Every function in $\Pi_i$ has $r_i$ as radius of convergence at $b$.
 \item $\bigcup\limits_{i=1}^n\Pi_i\cup \{E_1\}$, where $E_1:=[1,0,\dots,0]^T=\bV(s)\, [1,\dots,1]^T$, 
 is an optimal basis for $(T_\vphi,D_s)$ at $b$.
\end{enumerate}
\end{thm}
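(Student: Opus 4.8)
The plan is to prove the two assertions essentially by counting and by applying Lemma~\ref{lem: radius exactly} together with the optimality criterion of Lemma~\ref{lem: optimal criterion}. Assertion (1) is immediate: each element of $\Pi_i$ is of the form $\bV(s)\,v_\cU$ with $\cU$ a branch at some $\eta\in\cB_{\vphi,b}(r_i)$, so Lemma~\ref{lem: radius exactly} directly gives $\cR_b(\bV(s)\,v_\cU)=r_i$. So the real content is assertion (2).

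First I would check that the proposed set has the right cardinality, namely $\fr\cdot d$ with $\fr=1$, i.e. $d$. The set $\bigcup_{i=1}^n\Pi_i$ contributes $\sum_{i=1}^n\sum_{\eta\in\cB_{\vphi,b}(r_i)}(\delta(\eta)-1)=\sum_{\eta\in\cB_{\vphi,b}}(\delta(\eta)-1)$ vectors, and by Lemma~\ref{lem: sum of branches} applied with $\sD=\sD_t^\pm$ this equals $\#(\sD_t^\pm\cap\vphi^{-1}(b))-1=d-1$. Adding $E_1$ gives exactly $d$ vectors. By Corollary~\ref{cor: sol trivial} the space $(T_\vphi)_b^{D_s}$ has dimension $d$, so it suffices to prove these $d$ vectors are linearly independent; then automatically they span and form a basis, and it remains only to verify optimality via the radius bookkeeping.

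For linear independence, since $\bV(s)$ is invertible it is equivalent to show the vectors $v_\cU$ (for $\cU$ ranging over the chosen branches) together with $[1,\dots,1]^T$ are linearly independent in $k^d$. Here I would argue by induction on the branching radii, working from the largest $r_i$ downward, or equivalently by viewing $\T_{\vphi,b}$ as a tree: at each branching point $\eta$ the $\delta(\eta)$ branches partition the preimages lying above $\eta$ into $\delta(\eta)$ nonempty groups, and choosing $\delta(\eta)-1$ of them gives indicator vectors whose span, modulo the finer data below $\eta$, is complementary to the ``all-ones on the cluster'' vector. Stacking these across all branching points and adding the global all-ones vector $[1,\dots,1]^T$ yields a full-rank family; concretely one can order things so that the matrix formed by these column vectors is block-triangular with nonzero diagonal, which is the routine computation I would not grind through. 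The main obstacle is precisely setting up this ordering/triangularity cleanly so that the tree structure of $\T_{\vphi,b}$ is reflected in the linear algebra — once that is in place, independence, and hence the basis property, follows.

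Finally, for optimality I would invoke Lemma~\ref{lem: optimal criterion}(2): for each radius value $r$, letting $J_r$ be the indices of basis vectors with radius exactly $r$, I must show every nontrivial $k$-linear combination of $\{\bV(s)\,v_\cU : \cU\in\sU_\eta,\ \eta\in\cB_{\vphi,b}(r)\}$ again has radius exactly $r$. Pulling back through the invertible $\bV(s)$ and using the correspondence of Lemma~\ref{lem: new coord}, such a combination corresponds to a horizontal element of $T$ on $\vphi^{-1}(\sD_s(b,r^-))$ supported on a union of the connected components, nonzero on at least one component adjacent to a branching point of branching radius $r$; the argument in the proof of Lemma~\ref{lem: radius exactly} (via Lemma~\ref{lem: radius crit}) shows such an element cannot be extended to a larger disc, so its radius is exactly $r$ and not larger. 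The element $E_1$ has radius $1$ and is handled the same way (or trivially, as it corresponds to the constant section on all of $\sD_t^\pm$). Since the criterion of Lemma~\ref{lem: optimal criterion} then applies, the basis is optimal, completing the proof.
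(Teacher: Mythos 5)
Your route is genuinely different from the paper's in the two substantive steps, and most of it is sound. Part (1) and the cardinality count via Lemma \ref{lem: sum of branches} coincide with the paper. For optimality you use criterion (2) of Lemma \ref{lem: optimal criterion}, showing that a nontrivial combination of the radius-$r$ vectors cannot extend past $r$ because at the relevant branching point one branch is left unchosen, so the would-be extension (a constant on the bigger component) would have to vanish there; this is a correct adaptation of the argument inside Lemma \ref{lem: radius exactly} together with Lemma \ref{lem: radius crit}, and it is a legitimate alternative to the paper, which instead verifies criterion (3). The paper gets both the basis property and the radius bookkeeping from a single combinatorial statement, Lemma \ref{lem: help}: every $v_{\sD}$ is a combination of the chosen $v_{\cU}$'s and $[1,\dots,1]^T$ with coefficients supported on discs of image-radius at least that of $\vphi(\sD)$. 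That lemma gives \emph{spanning} (hence a basis, by the count) and simultaneously the radius control needed for criterion (3). You replace spanning by \emph{linear independence} plus the dimension count from Corollary \ref{cor: sol trivial}, which frees you from needing the radius-controlled coefficients; the price is that the independence of the indicator vectors becomes the load-bearing step of your proof.

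And that is where the genuine gap sits: you assert a ``block-triangular'' ordering and explicitly decline to carry it out, while acknowledging it as the main obstacle. This is not a routine triviality that can be waved through on the grounds that the sets $\cU\cap\vphi^{-1}(b)$ form a nested family: indicator vectors of a nested (laminar) family are \emph{not} automatically independent (for $d=2$ the vectors of $\{a_1\}$, $\{a_2\}$, $\{a_1,a_2\}$ are dependent); independence here holds precisely because at each branching point $\eta$ only $\delta(\eta)-1$ of the $\delta(\eta)$ branches are retained, and your sketch never uses this. The gap is fillable in a few lines without any triangularization: if $\sum_{\cU}\alpha_{\cU}v_{\cU}+\beta\,[1,\dots,1]^T=0$, pick a branch $\cU_0$ minimal under inclusion with $\alpha_{\cU_0}\neq 0$, attached at $\eta_0$; choose $a\in\cU_0\cap\vphi^{-1}(b)$ and $a'$ in the omitted branch at $\eta_0$ (nonempty since it meets $\T_{\vphi,b}$). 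Every other $\cU$ with $\alpha_{\cU}\neq 0$ either contains both $a,a'$ (if it contains $\eta_0$) or neither (if it is disjoint from $\cU_0$ or is another branch at $\eta_0$), so comparing the $a$- and $a'$-coordinates of the relation forces $\alpha_{\cU_0}=0$, a contradiction, and then $\beta=0$. With that inserted (and the identification $(\wtilde m)_\vphi=\bV(s)\,v_{\cU}$ from Lemma \ref{lem: new coord}, which you do invoke), your proof closes.
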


 We will need the following lemma.
 
 \begin{lemma}\label{lem: help}
  Let $\sD\subseteq \sD_t^\pm$ be an open disc. Then, $v_{\sD}$ is a linear combination of vectors $v_{\cU}$ where $\cU\in \sU_{\eta},\quad \eta \in \cB_{\vphi,b}(r_i)$, $i=1,\dots,n$ and vector $[1,\dots,1]^T$. Furthermore, in this linear combination only vectors $v_\cU$ with radius of disc $\vphi(\cU)$ bigger than or equal to the radius of disc $\vphi(\sD)$ can have non-zero coefficients and at least one such vector appears with nonzero coefficient.  
 \end{lemma}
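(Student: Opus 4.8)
\emph{Plan.} The strategy is to prove the identity by an induction that pushes the disc $\sD$ upward along the tree $\T_{\vphi,b}$, eliminating one branching point at a time, and to read off the radius estimate from the elementary fact that $\sD\subseteq\sD'$ forces $\vphi(\sD)\subseteq\vphi(\sD')$, so $\vphi(\sD')$ has the larger radius. We may assume $\sD\cap\vphi^{-1}(b)\neq\emptyset$, otherwise there is nothing to prove. First I would normalize $\sD$: by Remark \ref{rmk: bigger disc} there is an open disc $\cU'\supseteq\sD$ with $v_{\cU'}=v_\sD$ which is either $\sD_t^\pm$ or a branch at some branching point; since the radius of $\vphi(\cU')$ is at least that of $\vphi(\sD)$, it suffices to treat $\cU'$. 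If $\cU'=\sD_t^\pm$ then $v_\sD=[1,\dots,1]^T$ and we are done, so I may assume $\sD$ is a branch at a branching point $\eta_0$. By the remark following Definition \ref{defn: branching radii}, the radius of $\vphi(\sD)$ then equals the branching radius of $\eta_0$, and likewise the radius of $\vphi(\cU)$ equals the branching radius of $\eta$ whenever $\cU$ is a branch at a branching point $\eta$.

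\emph{The elimination step.} Put $\cU_{\eta_0}:=\{a_i:\eta_0\in l_{t,a_i}\}$, the set of preimages of $b$ lying below $\eta_0$. The branches at $\eta_0$ partition $\cU_{\eta_0}$, so writing $\sU_{\eta_0}$ for the chosen set of $\delta(\eta_0)-1$ branches and $\sD$ for the remaining branch at $\eta_0$, one gets $v_\sD=v_{\cU_{\eta_0}}-\sum_{\cU\in\sU_{\eta_0}}v_\cU$, a relation among indicator vectors of pairwise disjoint subsets of $\{1,\dots,d\}$. It then remains to identify $v_{\cU_{\eta_0}}$. If $\eta_0$ is the topmost branching point of $\T_{\vphi,b}$, then $\cU_{\eta_0}=\{a_1,\dots,a_d\}$ and $v_{\cU_{\eta_0}}=[1,\dots,1]^T$. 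Otherwise, letting $\eta_1$ be the first branching point strictly above $\eta_0$ and $\sD'$ the branch at $\eta_1$ containing $\eta_0$, I claim $\sD'\cap\vphi^{-1}(b)=\cU_{\eta_0}$, so that $v_{\cU_{\eta_0}}=v_{\sD'}$: the inclusion $\cU_{\eta_0}\subseteq\sD'\cap\vphi^{-1}(b)$ is immediate because the arc from any such $a_i$ to $\eta_1$ passes through $\eta_0\in\sD'$, and for the reverse inclusion an $a_i\in\sD'\cap\vphi^{-1}(b)$ with $\eta_0\notin l_{t,a_i}$ would force the arcs from $a_i$ and from $\eta_0$ to $\eta_1$ to branch apart at a point $\mu$ with $\eta_0<\mu<\eta_1$ carrying three directions of $\T_{\vphi,b}$, i.e.\ a branching point strictly between $\eta_0$ and $\eta_1$, contradicting the minimality of $\eta_1$.

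\emph{Iteration and bookkeeping.} In the non-topmost case the relation $v_\sD=v_{\sD'}-\sum_{\cU\in\sU_{\eta_0}}v_\cU$ reduces the problem to $v_{\sD'}$ with $\sD'$ again a branch at a branching point; since $\eta_0\in\sD'$ but $\eta_0\notin\sD$, the finite set $\cB_{\vphi,b}\cap\sD'$ strictly contains $\cB_{\vphi,b}\cap\sD$, so iterating terminates after finitely many steps in the topmost case, expressing $v_\sD$ as a linear combination of $[1,\dots,1]^T$ and of vectors $v_\cU$ with $\cU\in\sU_\eta$ for branching points $\eta=\eta_0<\eta_1<\cdots$ lying on the arc from $\eta_0$ towards $\eta_{0,1}$. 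Because the discs with maximal points $\eta_0\le\eta_j$ are nested, so are their images under $\vphi$, whence $r_s(\vphi(\eta_j))\ge r_s(\vphi(\eta_0))$; thus every $v_\cU$ that occurs has radius of $\vphi(\cU)$ at least the branching radius of $\eta_0$, i.e.\ at least the radius of $\vphi(\sD)$, and the same trivially holds for $[1,\dots,1]^T$. Finally $\delta(\eta_0)\ge 2$, since removing a branching point of $\T_{\vphi,b}$ leaves at least two downward branches, so $\sU_{\eta_0}\neq\emptyset$ and at least one qualifying vector appears with coefficient $-1\neq 0$; this yields all the assertions.

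\emph{Main obstacle.} The only genuinely non-formal ingredient is the tree combinatorics of the elimination step — the partition of $\cU_{\eta_0}$ by branches and the identity $\sD'\cap\vphi^{-1}(b)=\cU_{\eta_0}$ — which must be handled with care since the segments $l_{t,a}$ of $\T_{\vphi,b}$ are half-open and ``branch at $\eta$'' refers only to downward components of $\sD_t^\pm\setminus\{\eta\}$ that meet $\T_{\vphi,b}$; the reduction via Remark \ref{rmk: bigger disc}, the monotonicity of $\vphi$-image radii under inclusion, and the termination count are all routine.
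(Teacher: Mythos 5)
Your proof is correct and takes essentially the same route as the paper's: after the normalization via Remark \ref{rmk: bigger disc}, both arguments peel off branching points going up the tree through the partition identity $v_{\sD}=v_{\sD'}-\sum_{\cU\in\sU_{\eta}}v_{\cU}$ together with monotonicity of the image radii under $\vphi$, the only difference being that the paper packages the walk as a descending induction on the radius of $\vphi(\sD)$ while you make the ``next branching point above'' identification explicit (and verify the set equality $\sD'\cap\vphi^{-1}(b)=\cU_{\eta_0}$ in more detail). One small bookkeeping point worth adding, which the paper states explicitly: the iteration must also halt as soon as the current branch already lies in the chosen set $\sU_{\eta_j}$, since your elimination formula is only valid when the current branch is the omitted one.
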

 \begin{proof}
 By Remark \ref{rmk: bigger disc} we may assume that $\sD$ is either an open disc $\sD_t$ or a branch at some point $\eta\in \cB_{\vphi,b}$, hence the radius of $\vphi(\sD)$ is in $\{r_1,\dots,r_n,1\}$. We argue inductively on the size of this radius. Suppose that the radius of $\vphi(\sD)$ is 1. We distinguish two cases: 
 
 \emph{Case 1.} Our module $T$ is defined over the open unit disc $\sD_t^-$, hence $\sD=\sD_t^-$. Then, $v_{\sD}=[1,\dots,1]$.

\emph{Case 2.} Our module $T$ is defined over the closed unit disc $\sD_t^+$. In this case, $\sD$ is either one of the branches at $\eta_G$ either it is not and in the latter case $v_{\sD}$ is the zero vector. If $\sD$ is equal to one of the branches at $\eta_G$, then if $\sD$ is equal to one of the discs in $\sU_{\eta_G}$ we are done. If not, then $\sD_t$ is equal to the remaining branch at $\eta_G$ which is not in $\sU_{\eta_G}$, hence we may write
 $$
v_{\sD}=[1,\dots,1]^T-\sum_{\cU\in\sU_{\eta_G}}v_{\cU}. 
 $$
 
 Suppose now that the radius of $\vphi(\sD)$ is strictly smaller than 1, and equal to some $r_i$, $i=1,\dots,n$, and that Lemma is true for all the discs $\sD''\subseteq \sD_t^\pm$ for which $\vphi(\sD'')>r_i$ (hence equal to some $r_{i+1},\dots,r_n$ or 1).   We may assume that $\sD$ is a branch at some branching point $\eta\in\cB{\vphi,b}$ (otherwise $v_\sD$ is a zero vector). Then, if $\sD$ is one of the branches in $\sU_\eta$ we are done. If not, then $\sD$ is the remaining branch at $\eta$ which is not in $\sU_\eta$. Let $\sD'$ be any open disc in $\sD_t^\pm$ which contains $\eta$ and no other branches except those that are attached to $\eta$. Let further $\sD''$ be any branch so that $v_{\sD'}=v_{\sD''}$, or if no such branch exists, let $\sD''$ be disc $\sD_t^-$. Then, we may write   
 $$
v_\sD=v_{\sD''}-\sum_{\cU\in \sU_\eta}v_\cU. 
 $$
 However, the radius of $\vphi(\sD'')$ is strictly bigger than $r_i$ hence the inductive hypothesis applies, which finishes the proof. 
 \end{proof}

\begin{proof}[Proof of Theorem \ref{thm: opt basis trivial}]

 Part \emph{(1)} follows from Lemma \ref{lem: radius exactly}. 
 
  As for the part \emph{(2)}, we will use Lemma \ref{lem: optimal criterion} \emph{(3)}. We note that Lemma \ref{lem: sum of branches} implies that the cardinality of $\bigcup\limits_{i=1}^n\Pi_i\cup \{e_1\}$ is $d$.
 
 Let $\vec{f}(s)$ be a horizontal element of $T_\vphi$. Suppose that $\cR_b(\vec{f}(s))=r$ and let us put $\vphi^{-1}(\sD_s(b,r^-))=\cup_{i=1}^l\sD_t(a_i',r_i^-)$, the union being disjoint. By Remark \ref{rmk: bijection horizontal} and Corollary \ref{cor: iso diff} there are horizontal elements $m_i\in M^{D_t}_{a_i',r_i^-}$ such that $(\oplus_{i=1}^lm_i)_\vphi=\vec{f}(s)$. For $i'=1,\dots,l$, let us denote by $\wtilde{m}_{i'}$ the element in $\oplus_{i=1}^lM_{a_i',r_i^-}$ whose image under projection $\oplus_{i=1}^lM_{a_i',r_i^-}\to M_{a_j',r_j^-}$ is $m_{i'}$ if $j=i'$ and 0 otherwise. Thus, we may also write $(\sum_{i=1}^l\wtilde{m}_i)_\vphi=\sum_{i=1}^l(\wtilde{m}_i)_\vphi=\vec{f}(s)$. 

 However, each element $m_i$ is of the form $\alpha_i\cdot e_{|\sD_t(a_i',r_i^-)}$, for some $\alpha_i\in k$. Hence, by paragraph \ref{ssec: detail coordinates} and Lemma \ref{lem: new coord} we have that $(\wtilde{m}_i)_\vphi=\bV(s)\,v_{\sD_t(a_i',r_i^-)}$. 
 
 Then, Lemma \ref{lem: help} insures that $\vec{f}(s)$ is a linear combination of the asserted vectors, and together with Lemma \ref{lem: radius crit} implies that all the conditions in Lemma \ref{lem: optimal criterion} \emph{(3)} are satisfied. The theorem follows.
\end{proof}

%
%
%

\subsection{General case}\hfill\\

\subsubsection{} Let $\vec{Y}_{a_i,1}(t),\dots, \vec{Y}_{a_i,\fr}(t)$ be an optimal basis for $(M,D_t)$ at $a_i$, $i=1,\dots,d$. Suppose further that they are linked. For each $\vec{Y}_{a_i,j}(t)$, let us put $U_{i,j}:=\sD_t(a_i,r_{i,j}^-)$, where $r_{i,j}$ is the radius of convergence of $\vec{Y}_{a_i,j}(t)$ at $a_i$.

\begin{defn}
 We say that $(\vec{Y}_{a_i,j}(t),U_{i,j})$ is a \emph{fundamental pair} and we denote the set of fundamental pairs by $\sP$.
\end{defn}

\begin{rmk}
 We note that if $(M,D_t)=(T,D_t)$ is a trivial differential module over an open unit disc $\sD_t^-$ with basis of horizontal elements given by $1\cdot e$, then there is just one fundamental pair, namely $(1\cdot e,\sD_t^-)$. 
\end{rmk}
\begin{defn}

Let $\cP=(\vec{Y}(t),U)\in \sP$. We put $\cB_{\vphi,b}(\cP):=U\cap\cB_{\vphi,b}$. For each $\eta\in \cB_{\vphi,b}$, let $\sU_\eta$ denote any set of $\delta(\eta)-1$ branches at $\eta$. We set $\sU_\cP$ to denote the set of all branches chosen in this way together with $U$.

 Let $\eta\in \cB_{\vphi,b}(\cP)$ and let $\cU$ be any branch at $\eta$. We define two vectors of analytic functions
\begin{itemize}
 \item Vector $v_{\cP,\cU,t}$ whose $\fr$ entries in positions $(i-1)\cdot \fr+1,\dots,i\cdot \fr$ are $\fr$ entries of $\vec{Y}(t)$ if $a_i\in \cU$, $i=1,\dots,d$ and are $0$ otherwise. 
 \item Vector $v_{\cP,\cU,s}$ whose $\fr$ entries in positions $(i-1)\cdot \fr+1,\dots,i\cdot \fr$ are $\fr$ entries of $\vec{Y}(u_{a_i}(s))$ if $a_i\in \cU$, $i=1,\dots,d$, and are $0$ otherwise. 
\end{itemize}
\end{defn}

For the convenience, we sum up the results of Paragraph \ref{ssec: detail coordinates} and Lemma \ref{lem: new coord} in the following:

\begin{lemma}\label{lem: sum up}
 Let $\cP=(\vec{Y}(t),U)\in \sP$ and let $\vphi^{-1}\big(\vphi(U)\big)=\cup_{i=1}^l\sD_t(a_i',r_i^-)$, the latter union being disjoint, and let $i'$ be such that $U=\sD_t(a_{i'}',r_{i'}^-)$. Further, let $\cU\in\sU_\sP$ and let $m_{\cP,\cU}$ be the horizontal element of $M\otimes \cO_t(a_{i'},r_{i'}^-)$ that corresponds to the restriction of $\vec{Y}(t)$ to $\cU$ and $\wtilde{m}_{\cP,\cU}$ be the horizontal element of $\oplus_{i=1}^lM\otimes \cO_t(a_i',r_i^-)$ whose image under projection $\oplus_{i=1}^lM\otimes \cO_t(a_i',r_i^-)\to M\otimes \cO_t(a_j',r_j^-)$ is $m_{\cP,\cU}$ if $j=i'$ and $0$ otherwise.
 
 Then, the image of $\wtilde{m}_{\cP,\cU}$ (with respect to obvious basis) under the map \eqref{eq: res map} can be identified with $v_{\cP,\cU,t}$ while $(\wtilde{m}_{\cP,\cU})_\vphi=\bV_\fr(s)\,v_{\cP,\cU,s}$.
\end{lemma}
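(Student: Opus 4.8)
The plan is to prove Lemma~\ref{lem: sum up} as a direct specialization of the coordinate computations of Paragraph~\ref{ssec: detail coordinates}, together with Lemma~\ref{lem: new coord}, applied to the single horizontal element $\wtilde{m}_{\cP,\cU}$; once the auxiliary discs are set up correctly both assertions come out by substitution, so the work is entirely bookkeeping. The first thing I would do is fix the radii. Since $\cU$ meets $\T_{\vphi,b}$ it contains at least one preimage of $b$, so we may re-center and write $\cU=\sD_t(a_{i'}',r_{i'}^-)$ with $a_{i'}'\in\vphi^{-1}(b)$; I would then take $r$ to be the radius of the disc $\vphi(\cU)$, so that $\vphi^{-1}\big(\sD_s(b,r^-)\big)=\cup_{i=1}^{l}\sD_t(a_i',r_i^-)$ is a disjoint union having $\cU$ as one of its components (this uses that the restriction of $\vphi$ to a disc sends strictly larger subdiscs to strictly larger discs, so $\cU$ cannot be properly contained in the component of $\vphi^{-1}\big(\vphi(\cU)\big)$ through it). Finally I would pick $R\in(0,r]$ small enough that $\vphi^{-1}\big(\sD_s(b,R^-)\big)=\cup_{i=1}^{d}\sD_t(a_i,R_i^-)$ has the maximal number $d$ of components, one around each preimage $a_i$; for such $R$ one has $\sD_t(a_j,R_j^-)\subseteq\cU$ exactly when $a_j\in\cU$, and otherwise $\sD_t(a_j,R_j^-)$ lies in a component of $\vphi^{-1}\big(\sD_s(b,r^-)\big)$ other than $\cU$.

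For the first assertion I would just track $\wtilde{m}_{\cP,\cU}$ through the restriction map \eqref{eq: res map} for this choice of $r$ and $R$. By construction $\wtilde{m}_{\cP,\cU}$ is concentrated in the summand $M\otimes\cO_t(a_{i'}',r_{i'}^-)$, where it is (the restriction of) $\vec Y(t)$, and it vanishes on every other summand; hence its restriction to the $j$-th summand $M\otimes\cO_t(a_j,R_j^-)$ of the target is the restriction of $\vec Y(t)$ when $a_j\in\cU$ and is $0$ otherwise. Writing each of these components in the fixed basis $\underline e$ and concatenating, one gets exactly the vector $v_{\cP,\cU,t}$ of the definition, block by block.

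For the second assertion I would feed this element into Lemma~\ref{lem: new coord}. With $v_{\cP,\cU,t}=\sum_{j=1}^{\fr}y_j(t)\,e_j$ and $y_j(t)=\bigoplus_{i=1}^{d}y_{j,i}(t)$, where $y_{j,i}(t)$ is the $j$-th $\underline e$-coordinate of $\vec Y(t)$ when $a_i\in\cU$ and is $0$ otherwise, Lemma~\ref{lem: new coord} tells us that $\wtilde{m}_{\cP,\cU}$, pushed forward and expressed in the basis $\ue_\vphi$, is the column
$$
\bV_\fr(s)\,\big[\,y_{1,1}\big(u_{a_1}(s)\big),\dots,y_{1,d}\big(u_{a_d}(s)\big),\,y_{2,1}\big(u_{a_1}(s)\big),\dots,y_{\fr,d}\big(u_{a_d}(s)\big)\,\big]^T ;
$$
the bracketed vector is, up to the evident re-indexing of its $\fr\cdot d$ entries, precisely $v_{\cP,\cU,s}$, so $(\wtilde{m}_{\cP,\cU})_\vphi=\bV_\fr(s)\,v_{\cP,\cU,s}$. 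I would also note that it is harmless to compute at the level $R$ rather than $r$, because the restriction $\cO_s(b,r^-)\hookrightarrow\cO_s(b,R^-)$ is a horizontal morphism and leaves the coordinate vector of a horizontal element unchanged, and that $\vec Y\big(u_{a_j}(s)\big)$ makes sense for $a_j\in\cU$ because $\cU\subseteq U$ and $u_{a_j}(s)$ takes values in $\sD_t(a_j,R_j^-)\subseteq\cU$.

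The step I expect to require the most care is the very first one: one has to take the preimage of $\vphi(\cU)$ (and not of the whole fundamental disc $U$), so that $\cU$ itself occurs as a connected component and the ``extension by zero'' built into $\wtilde{m}_{\cP,\cU}$ produces the pattern $v_{\cP,\cU,t}$ rather than the coarser one supported on all preimages lying in $U$; and one must keep the block ordering of $\ue_\vphi$ (hence of $\bV_\fr(s)$ and of the vectors $v_{\cP,\cU,\cdot}$) consistent throughout. Everything else is a direct substitution into formulas already established in Paragraph~\ref{ssec: detail coordinates} and Lemma~\ref{lem: new coord}.
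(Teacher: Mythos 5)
Your proposal is correct and takes essentially the same route as the paper: the lemma is stated there without a separate proof, as a direct summary of Paragraph \ref{ssec: detail coordinates} and Lemma \ref{lem: new coord}, and your argument is precisely that bookkeeping carried out, with the same identification of the pushed-forward element via $\bV_\fr(s)$. Your decision to work with the decomposition of $\vphi^{-1}\big(\vphi(\cU)\big)$, so that $\cU$ itself is a connected component and $\wtilde{m}_{\cP,\cU}$ is its extension by zero, is the intended reading (it is how the lemma is actually invoked in the proof of Theorem \ref{thm: main}), so that deviation from the literal wording of the statement is the right call.
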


Then, if an $r\in (0,1)$ is small enough, so that $\vphi^{-1}(\sD_s(b,r^-))=\cup_{i=1}^d\sD_s(a_i,r_i^-)$, $v_{\cP,\cU,t}$ can be identified with an element of the differential module $M\otimes \bigoplus_{i=1}^d\cO_t(a_i,r_i^-)$. In fact, by construction, $v_{\cP,\cU,t}$ is horizontal element. 

On the other side, by construction from Paragraph \ref{ssec: detail coordinates} and Lemma \ref{lem: new coord}, we see that $(v_{\cP,\cU,t})_\vphi=\bV_\fr(s)\,v_{\cP,\cU,s}$ so that $\bV_\fr(s)\,v_{\cP,\cU,s}$ is a horizontal element of the direct image differential module $M_\vphi\otimes \cO_s(b,r^-)$.

\begin{rmk}
 Once again, we note that in case of a trivial differential module $(T,D_t)$, the set $\sP$ just constructed coincides with the set of elements $(1e,\cU)$, where $\cU$ are as in Theorem \ref{thm: opt basis trivial}.  
\end{rmk}

Finally, we reach the main result of this article. 

\begin{thm}\label{thm: main}
 Keeping the setting as before, we have:
 \begin{enumerate}
  \item The radius of convergence at $b$ of horizontal element $\bV_\fr(s)\, v_{\cP,\cU,s}$ is equal to the radius of disc $\vphi(\cU)$.
  \item An optimal basis of $(M_\vphi,D_s)$ at $b$ is given by $\bV_\fr(s)\, v_{\cP,\cU,s}$, where $\cU\in\sU_\cP$, $\cP\in \sP$.
 \end{enumerate}

\end{thm}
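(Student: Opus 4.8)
The plan is to reduce the general case to the two ingredients already established: the explicit description of horizontal elements for the direct image (Theorem \ref{thm: fund sol} together with Lemma \ref{lem: sum up}), and the trivial-module case (Theorem \ref{thm: opt basis trivial}), using the local structure of solutions encoded in the fundamental pairs and the linked property of the optimal bases. For part (1), I would argue exactly as in the trivial case. By Lemma \ref{lem: sum up}, the vector $\bV_\fr(s)\,v_{\cP,\cU,s}$ is the image under $\Psi_r$ of the horizontal element $\wtilde m_{\cP,\cU}$, whose restriction to the component $\cU$ equals $\vec Y(t)$ and whose restriction to the other components of $\vphi^{-1}(\vphi(\cU))$ is $0$. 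Writing $R$ for the radius of $\vphi(\cU)$, so that $\cU=\sD_t(a',q^-)$ with $\vphi(\cU)=\sD_s(b,R^-)$, we get $\cR_b(\bV_\fr(s)\,v_{\cP,\cU,s})\ge R$ immediately from Lemma \ref{lem: radius crit}, since the component functions are genuinely horizontal on the $\sD_t(a_i,R_i^-)$. For the reverse inequality, if $R<1$, I would invoke Lemma \ref{lem: radius crit} in the form of its criterion (2): any horizontal extension of $\vec Y(t)$ past $U$ to a disc $\sD_t(a',\rho^-)$ with $\rho>q$ would have to agree with $\vec Y(t)$ on $U$ (here optimality of the basis at $a_i$ and the linked condition are what forbid a larger horizontal element interpolating these data), yet $\sD_t(a',\rho^-)$ now contains the branching point $\eta$ of $\cU$, hence contains some other component of $\vphi^{-1}(\vphi(U))$ on which our prescribed restriction is $0$ while a horizontal extension need not vanish; the mismatch forces $\cR_b=R$.

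For part (2), the strategy is to verify criterion (3) of Lemma \ref{lem: optimal criterion}. First I would check the cardinality: Lemma \ref{lem: sum of branches} applied to each fundamental disc $U$ (with $\cB_{\vphi,b}(\cP)=U\cap\cB_{\vphi,b}$) shows that $\#\sU_\cP = 1+\sum_{\eta\in\cB_{\vphi,b}(\cP)}(\delta(\eta)-1) = \#(U\cap\vphi^{-1}(b))$, and summing over $\cP\in\sP$ — i.e. over all fundamental pairs at all the $d$ preimages — gives total cardinality $\fr\cdot d=\operatorname{rk} M_\vphi$, matching the rank. That the proposed vectors are linearly independent follows because $\bV_\fr(s)$ is invertible and the $v_{\cP,\cU,t}$ are visibly independent (they come from independent horizontal data on disjoint discs, exactly as in Theorem \ref{thm: fund sol}). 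Then, given any horizontal $\vec f(s)$ of $M_\vphi$ with $\cR_b(\vec f(s))=r$, Corollary \ref{cor: iso diff} and Remark \ref{rmk: bijection horizontal} yield horizontal $m_i\in M_{a_i',r_i^-}^{D_t}$ on the components $\sD_t(a_i',r_i^-)$ of $\vphi^{-1}(\sD_s(b,r^-))$ with $(\oplus m_i)_\vphi=\vec f(s)$. Using optimality of the basis at the relevant $a_i$, I would expand each $m_i$ in the horizontal basis $\vec Y_{a_i,1},\dots,\vec Y_{a_i,\fr}$ restricted to $\sD_t(a_i',r_i^-)$; only those $\vec Y_{a_i,j}$ with radius $\ge r_i$ can occur (since they must survive the restriction), and the linked property guarantees that those are among fundamental-pair data consistently across the components. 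Decomposing the indicator of each disc $\sD_t(a_i',r_i^-)$ via Lemma \ref{lem: help} (applied component-by-component, bundling the $\fr$ coordinates), I would write $v_{\cP,\cdot,t}$-type pieces for $\sD_t(a_i',r_i^-)$ as linear combinations of the $v_{\cP,\cU,t}$ with $\cU\in\sU_\cP$, with only those $\cU$ for which the radius of $\vphi(\cU)$ is $\ge r$ appearing — and at least one with radius exactly $r$. Transporting through $\bV_\fr(s)$ then expresses $\vec f(s)$ in the asserted basis with the radius-monotonicity and equality-attainment demanded by criterion (3).

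The main obstacle I expect is the bookkeeping in the decomposition step: Lemma \ref{lem: help} is stated for the scalar (trivial-module) situation, and here I must apply it to the vector-valued indicators while simultaneously tracking which fundamental pair $\cP=(\vec Y(t),U)$ a given branch $\cU\in\sU_\cP$ belongs to, and making sure the linked hypothesis is genuinely used to pin down that the same $\vec Y(t)$ appears on every component where it occurs. Concretely, when $m_i$ lives on $\sD_t(a_i',r_i^-)$ and I split the indicator of that disc into branch-indicators at intervening branching points, I need to know that crossing a branching point $\eta\in U$ with $\eta\in\cB_{\vphi,b}$ the horizontal element $\vec Y(t)$ still represents a valid fundamental-pair datum on each sub-branch — this is precisely the content of the linked condition (Definition \ref{defn: linked}) and of the existence lemma preceding it, so the argument goes through, but it requires care to phrase the induction so that it simultaneously respects the partition of $\sP$ into the $\sU_\cP$'s. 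A secondary subtlety is ensuring the equality case in criterion (3) — that at least one $v_{\cP,\cU,s}$ with $\cR_b$-value exactly $r$ actually appears — which follows from the ``at least one such vector appears with nonzero coefficient'' clause of Lemma \ref{lem: help} combined with part (1) of the present theorem, together with the fact that $\cR_b(\vec f(s))=r$ means the extension of $\vec f$ fails precisely at radius $r$ on some component, so some $\sD_t(a_i',r_i^-)$ has $r_i$ mapping to $r$ under $\vphi$.
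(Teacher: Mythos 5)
Your plan follows the paper's strategy in its essentials: part (1) is argued exactly as in the paper (Lemma \ref{lem: radius crit} plus the behaviour of the prescribed data at a branching point), and part (2) rests on criterion (3) of Lemma \ref{lem: optimal criterion}, the component decomposition of Corollary \ref{cor: iso diff}, expansion in the linked optimal bases, a fundamental-pair version of the help lemma (this is literally the paper's Lemma \ref{lem: help 2}), and the count coming from Lemma \ref{lem: sum of branches}. The organizational difference is that the paper proves (2) by induction on the radius, splitting $\vec{f}(s)$ into a piece $\vec{Z}_1(s)$ of radius exactly $r$ and a piece $\vec{Z}_2(s)$ of strictly larger radius (Claims 1 and 2), whereas you decompose every localized piece directly through the help lemma. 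The direct route can be made to work, but only if you also handle the case where the component $\sD_t(a_i',r_i^-)$ is not itself a branch at a point of $\cB_{\vphi,b}(\cP)$: you must replace it by the smallest branch (or by $U$ itself) with the same intersection with $\vphi^{-1}(b)$, in the spirit of Remark \ref{rmk: bigger disc}; you flag this bookkeeping, and it is genuine, precisely the point the paper's $I''(i)$ and the induction are designed to absorb.

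Two of your justifications are wrong as written and need repair. First, linear independence ``because the $v_{\cP,\cU,t}$ come from independent horizontal data on disjoint discs'' is false: for a fixed $\cP=(\vec{Y}(t),U)$ the members of $\sU_\cP$ are nested, not disjoint ($U\in\sU_\cP$ contains every branch at every $\eta\in\cB_{\vphi,b}(\cP)$). The clean way out is the paper's: your expansion argument already shows the family spans the space of horizontal elements at $b$, and the counting via Lemma \ref{lem: sum of branches} together with the linked hypothesis (which is what makes each class of pairs $(i,j)$ have size $\#\big(\cU_{i,j}\cap\vphi^{-1}(b)\big)$) gives exactly $\fr\cdot d$ elements, hence a basis. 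Second, your argument for the ``at least one of radius exactly $r$'' clause is not sound: the nonvanishing clause of the help lemma applies to each individual decomposition, but after summing the contributions over all components and all $j$ the coefficients of the radius-$r$ basis vectors could in principle cancel. Either observe that the clause is automatic --- if every basis vector occurring in the (unique) expansion had radius strictly bigger than $r$, then $\cR_b\big(\vec{f}(s)\big)>r$, contradicting the hypothesis --- or control the cancellation as the paper does with its $\vec{Z}_1$/$\vec{Z}_2$ splitting. Finally, in part (1) the mechanism is misattributed: neither optimality nor linkedness is used there; for $\cU\neq U$ the point is that a horizontal element on the larger disc, being forced to restrict to $0$ on a second branch at $\eta$, is identically zero, contradicting that it restricts to $\vec{Y}(t)$ on $\cU$, while for $\cU=U$ one only uses that the radius of $U$ is the exact radius of convergence of $\vec{Y}(t)$.
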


Before going to the proof, we will need the following generalization of Lemma \ref{lem: help}.

\begin{lemma}\label{lem: help 2}
 Let $\cV$ be any branch at some $\eta\in\cB_{\vphi,b}(\cP)$. Then, $v_{\cP,\cV,t}$ is a linear combination of vectors of the form $v_{\cP,\cU,t}$, where $\cU\in\sU_\cP$ and moreover only vectors with the radius of $\vphi(\cU)$ bigger than or equal to the radius of $\vphi(\cV)$ have nonzero coefficients with at least one such vector appearing with nonzero coefficient. 
\end{lemma}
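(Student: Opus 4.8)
The plan is to reduce Lemma \ref{lem: help 2} to the combinatorial statement of Lemma \ref{lem: help} applied componentwise, taking advantage of the fact that the bases are \emph{linked}. Fix the fundamental pair $\cP=(\vec{Y}(t),U)$ and a branch $\cV$ at some $\eta\in\cB_{\vphi,b}(\cP)$. The key observation is that the vector $v_{\cP,\cV,t}$ has a block structure: its $i$-th block of length $\fr$ is either $\vec{Y}(t)$ (when $a_i\in\cV$) or $0$, so $v_{\cP,\cV,t}=v_{\cV}\otimes\vec{Y}(t)$ in the obvious sense, where $v_{\cV}$ is the $0/1$ incidence vector of Definition (the one used in Lemma \ref{lem: help}), now taken relative to the preimages $a_1,\dots,a_d$ of $b$. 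The same tensor description holds for every branch $\cU$ at every $\eta'\in\cB_{\vphi,b}(\cP)$, and for $U$ itself, \emph{provided} that in each block where the incidence vector is $1$ the truncated series $\vec{Y}(t)$ is genuinely a horizontal element at the corresponding $a_i$ — this is exactly where linkedness enters, and where one must be slightly careful.

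First I would make precise what "branches at $\eta\in\cB_{\vphi,b}(\cP)$" contribute: by definition $\cB_{\vphi,b}(\cP)=U\cap\cB_{\vphi,b}$, so every such $\eta$ lies in $U=\sD_t(a_{i'},r_{i',j}^-)$, where $r_{i',j}=\cR_{a_{i'}}(\vec{Y}(t))$. If $a_i\in\cV\subseteq U$ for some branch $\cV$ at such an $\eta$, then $|a_i-a_{i'}|<r_{i',j}=\cR_{a_{i'}}(\vec{Y}_{a_{i'},j}(t))$, so by the linkedness hypothesis (Definition \ref{defn: linked}) $\vec{Y}(t)=\vec{Y}_{a_{i'},j}(t)$ is equal to one of the basis vectors $\vec{Y}_{a_i,1}(t),\dots,\vec{Y}_{a_i,\fr}(t)$ at $a_i$; in particular it is a horizontal element at $a_i$ and its expansion there converges on a disc containing $\cV$. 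This guarantees that $v_{\cP,\cU,t}$ is a well-defined horizontal element of $M\otimes\bigoplus_{i=1}^d\cO_t(a_i,r_i^-)$ for every $\cU\in\sU_\cP$ and that the tensor identity $v_{\cP,\cU,t}=v_{\cU}\otimes\vec{Y}(t)$ makes sense on the relevant disc. Note $\vphi(\cU)$ and $\vphi(\cV)$ are discs whose radii are among the branching radii $r_1,\dots,r_n$ of $\T_{\vphi,b}$, so "radius of $\vphi(\cU)\ge$ radius of $\vphi(\cV)$" is the same comparison that appears in Lemma \ref{lem: help}.

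Next I would invoke Lemma \ref{lem: help}: since $\cV$ is in particular an open disc in $\sD_t^\pm$, we may write
$$
v_{\cV}=\lambda_0\,[1,\dots,1]^T+\sum_{\cU\in\sU_{\cP}\setminus\{U\}}\lambda_{\cU}\,v_{\cU},
$$
where only those $\cU$ with radius of $\vphi(\cU)$ at least the radius of $\vphi(\cV)$ have nonzero coefficients, with at least one such coefficient nonzero. Tensoring both sides with $\vec{Y}(t)$ turns this into the desired identity for $v_{\cP,\cV,t}$ once I account for the $[1,\dots,1]^T$ term: I claim $[1,\dots,1]^T\otimes\vec{Y}(t)$ equals $v_{\cP,U,t}$ up to the branches already in $\sU_\cP$. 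Concretely, $[1,\dots,1]^T\otimes\vec{Y}(t)$ would be the vector with $\vec{Y}(t)$ in every block; but $\vec{Y}(t)$ need not be horizontal at $a_i$ for $a_i\notin U$. The fix is that $v_{\cV}$ is supported inside the incidence vector of $U$ (as $\cV\subseteq U$), so in the linear combination above one may replace $[1,\dots,1]^T$ by $v_{U}$ (the incidence vector of $U$) after absorbing the difference $[1,\dots,1]^T-v_{U}$ into the terms supported outside $U$, which vanish after tensoring against the support of $v_\cV$; more cleanly, I would re-run the inductive argument of Lemma \ref{lem: help} \emph{inside} $U$, i.e. apply it to the restriction of $\vphi$ to $\vphi^{-1}(\vphi(U))$ and the disc $\cV\subseteq U$, so that the base case of the induction produces $v_{U}$ rather than $[1,\dots,1]^T$. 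Then tensoring with $\vec{Y}(t)$ gives exactly $v_{\cP,\cV,t}=\sum_{\cU\in\sU_\cP}\lambda_{\cU}\,v_{\cP,\cU,t}$ with the asserted support condition on the radii.

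The main obstacle is the bookkeeping at the boundary between "inside $U$" and "outside $U$": Lemma \ref{lem: help} is stated for an arbitrary disc in $\sD_t^\pm$ and its base case involves $[1,\dots,1]^T$ or the top branching point $\eta_G$, whereas here everything must be localized to $U$ so that linkedness is available and $\vec{Y}(t)$ makes sense as a horizontal section in every block that occurs. Once one checks that the branching structure of $\T_{\vphi,b}$ intersected with $U$ is again of the form treated in Lemma \ref{lem: help} (with $U$ playing the role of the ambient disc and $U$'s maximal point playing the role of $\eta_G$, or $U$ being all of $\sD_t^-$), the induction goes through verbatim and the tensoring step is purely formal. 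I would therefore organize the proof as: (i) localize to $\vphi^{-1}(\vphi(U))$ and record that $\cB_{\vphi,b}(\cP)$ together with $\sU_\cP$ is exactly the data Lemma \ref{lem: help} needs there; (ii) apply Lemma \ref{lem: help} to get the scalar identity for $v_\cV$ in terms of $v_U$ and the $v_\cU$; (iii) tensor with $\vec{Y}(t)$, using linkedness to see each resulting block is a legitimate horizontal element, and read off the radius condition from the fact that $\vphi$ maps the discs in play to discs of the stated radii.
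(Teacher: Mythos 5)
Your proof is correct and takes essentially the same route as the paper: the paper's own (sketched) argument is exactly the induction of Lemma \ref{lem: help} re-run inside $U$, climbing from $\eta$ to higher branching points of $\cB_{\vphi,b}(\cP)$ and terminating at $U\in\sU_\cP$, which is precisely your steps (i)--(iii), with the block identification $v_{\cP,\cU,t}=v_{\cU}\otimes\vec{Y}(t)$ being only a repackaging. Your observation that the global Lemma \ref{lem: help} cannot be invoked verbatim (because of the $[1,\dots,1]^T$ term and of branches lying outside $U$) and your decision to localize the induction to $\vphi^{-1}\big(\vphi(U)\big)$ is exactly what the paper's proof does.
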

\begin{proof}
 The proof is similar to that of Lemma \ref{lem: help}, so we just sketch it.  Namely, if $\cV\in\sU_\cP$ there is nothing to prove. If $\cV\notin \sU_{\cP}$, there is an $\eta'\in \cB_{\vphi,b}(\cP)$ and a branch $\cV'$ at $\eta'$ so that $\cV'$ contains no point of $\cB_{\vphi,b}(\cP)$ whose radius is bigger than $\eta$. Then, it is not difficult to see that 
 $$
 v_{\cP,\cV,t}=v_{\cP,\cV',t}-\sum_{\cU\in\sU_\eta}v_{\cP,\cU,t}.
 $$
 Applying the same reasoning now to $v_{\cP,\cV',t}$ will lead us eventually to the case where $\cV'\in \sU_\cP$ which finishes the proof. 
\end{proof}

\begin{cor}
 The set $\bV_\fr(s)\, v_{\cP,\cU,s}$, $\cU\in \sU_\cP$, $\cP\in \sP$ is a basis of horizontal elements for $M_\vphi$ at $b$.
\end{cor}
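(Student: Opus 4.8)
The plan is to check two things about the proposed family $\{\bV_\fr(s)\,v_{\cP,\cU,s} : \cU\in\sU_\cP,\ \cP\in\sP\}$: that it has at most $\fr\cdot d$ members, and that it spans $(M_\vphi)^{D_s}_b$, a space of dimension $\fr\cdot d$ by Theorem \ref{thm: fund sol}; a spanning family of size $\le\fr\cdot d$ in an $\fr\cdot d$-dimensional space is automatically a basis (and then has exactly $\fr\cdot d$ elements). \emph{For the count:} fix $\cP=(\vec Y(t),U)\in\sP$. Since $\sU_\cP$ consists of $U$ together with $\delta(\eta)-1$ branches at each $\eta\in\cB_{\vphi,b}(\cP)=U\cap\cB_{\vphi,b}$, Lemma \ref{lem: sum of branches} applied to the disc $U$ gives $\#\sU_\cP=\#\big(U\cap\vphi^{-1}(b)\big)$, so $\sum_{\cP\in\sP}\#\sU_\cP$ equals the number of pairs $(\cP,a_i)$ with $a_i$ in the disc of $\cP$. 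Now fix $a_i\in\vphi^{-1}(b)$. If $a_i$ lies in the disc $U$ of $\cP=(\vec Y(t),U)$, then $U=\sD_t(a_j,\rho^-)$ with $|a_i-a_j|<\rho$, hence $U=\sD_t(a_i,\rho^-)$, and near $a_i$ the series $\vec Y(t)$ is a horizontal element of $M$; by Definition \ref{defn: linked} it coincides with one of $\vec Y_{a_i,1}(t),\dots,\vec Y_{a_i,\fr}(t)$, and that index determines $\cP$ completely, since $\rho$ is the radius of convergence of that series at $a_i$. Conversely each $\vec Y_{a_i,l}(t)$ arises from the pair $(\vec Y_{a_i,l}(t),U_{i,l})\in\sP$, whose disc contains $a_i$. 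So exactly $\fr$ fundamental pairs contain $a_i$, whence $\sum_{\cP\in\sP}\#\sU_\cP=\fr\cdot d$.

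\emph{Reduction to the source.} By the discussion just before the corollary each $\bV_\fr(s)\,v_{\cP,\cU,s}$ is a horizontal element of $M_\vphi\otimes\cO_s(b,r^-)$, and the isomorphism of Corollary \ref{cor: iso diff} identifies $(M_\vphi)^{D_s}_b$ with the space of horizontal elements of $\bigoplus_{i=1}^d M\otimes\cO_t(a_i,r_i^-)$, carrying $\bV_\fr(s)\,v_{\cP,\cU,s}$ to $v_{\cP,\cU,t}$ (Lemma \ref{lem: sum up}). By the proof of Theorem \ref{thm: fund sol} the columns of $\bigoplus_{i=1}^d\mathbf{Y}_i(t)$ form a basis of that horizontal space; write $W$ for their $k$-span. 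Each $v_{\cP,\cU,t}$ lies in $W$, since its $i$-th block is either $0$ or $\vec Y_\cP(t)$, and $\vec Y_\cP(t)$ is a column of $\mathbf{Y}_i$ by the linked condition. So it suffices to prove that the $v_{\cP,\cU,t}$ span $W$.

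\emph{Spanning.} For $\cP=(\vec Y(t),U)\in\sP$ and $a_i\in U$, let $w_{i,\cP}$ be the vector whose $i$-th block is $\vec Y(t)$ and all of whose other blocks vanish. I claim $w_{i,\cP}\in\mathrm{span}\{v_{\cP,\cU,t}:\cU\in\sU_\cP\}$. If $U\cap\vphi^{-1}(b)=\{a_i\}$ this is immediate, as $w_{i,\cP}=v_{\cP,U,t}$. Otherwise, let $\eta_i$ be the branching point at which $l_{t,a_i}$ first meets another $l_{t,a_j}$ with $a_j\in U$; then $\eta_i\in\cB_{\vphi,b}(\cP)$ and the branch $\cV_i$ at $\eta_i$ containing $a_i$ meets $\vphi^{-1}(b)$ only in $a_i$, so $w_{i,\cP}=v_{\cP,\cV_i,t}$, and Lemma \ref{lem: help 2} expresses $v_{\cP,\cV_i,t}$ as a $k$-combination of the $v_{\cP,\cU,t}$ with $\cU\in\sU_\cP$. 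Thus every $w_{i,\cP}$ lies in the span of the $v_{\cP,\cU,t}$. Finally, for fixed $a_i$, as $\cP$ runs over the $\fr$ fundamental pairs whose disc contains $a_i$, the series $\vec Y(t)$ runs through $\vec Y_{a_i,1}(t),\dots,\vec Y_{a_i,\fr}(t)$ (Definition \ref{defn: linked}), so the corresponding $w_{i,\cP}$ span the $i$-th block of $W$; summing over $i=1,\dots,d$ shows the $w_{i,\cP}$, hence the $v_{\cP,\cU,t}$, span $W$. Transporting back by the isomorphism of Corollary \ref{cor: iso diff}, the $\bV_\fr(s)\,v_{\cP,\cU,s}$ span $(M_\vphi)^{D_s}_b$; being at most $\fr\cdot d=\dim(M_\vphi)^{D_s}_b$ of them, they form a basis.

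\emph{Main obstacle.} I expect the only real work to be the bookkeeping: correctly identifying repeated fundamental pairs in the count — which rests on the elementary fact that $|a_i-a_j|<\rho$ forces $\sD_t(a_i,\rho^-)=\sD_t(a_j,\rho^-)$, together with Definition \ref{defn: linked} — and verifying that $\cV_i$ is genuinely a branch at a point of $\cB_{\vphi,b}(\cP)$ containing no further preimage of $b$. Once these are in place, Lemma \ref{lem: help 2} and Theorem \ref{thm: fund sol} supply everything else.
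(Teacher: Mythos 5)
Your proof is correct and follows essentially the same route as the paper: spanning is obtained by expressing the basis from Theorem \ref{thm: fund sol} (your $w_{i,\cP}=v_{\cP,\cV_i,t}$, with $\cV_i$ a branch containing no other preimage of $b$) through Lemma \ref{lem: help 2}, and the count $\fr\cdot d$ comes from Lemma \ref{lem: sum of branches} together with the linked condition. Your double-counting of pairs $(\cP,a_i)$ is just a rephrasing of the paper's equivalence-class argument, so no further comment is needed.
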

\begin{proof}
The previous lemma shows that the set generates the space of horizontal elements. Indeed, for $\vec{Y}_{a_i,j}(t)$, let $\cU_{i,j}$ be the open disc so that $(\vec{Y}_{a_i,j}(t),\cU_{i,j})\in \sP$. Further, for each $i$, let $\cV_i$ denote the branch in $\sD_t^\pm$ that contains $a_i$ and no other point in $\vphi^{-1}(b)$. Then, Theorem \ref{thm: fund sol} shows that a basis of horizontal elements for $M_\vphi$ at $b$ is given by functions $\bV_\fr(s)\,v_{(\vec{Y}_{a_i,j}(t),\cU_{i,j}),\cV_i,s}$, $i=1,\dots, d$, $j=1,\dots,\fr$.  The previous lemma then shows that these functions can be expressed as linear combinations of the required ones.

It remains to prove that the asserted set has $\fr\cdot d$ elements. If we start with $\fr\cdot d$ pairs $(i,j)$, $i=1,\dots,\fr$, $j=1,\dots,d$, and introduce on them an equivalence relation $\sim$ with $(i,j) \sim (i',j')$ if and only if $j=j'$ and $a_{i'}\in \cU_{i,j}$, then we see that the number of classes is equal to the number of different fundamental pairs $(\vec{Y}_{a_i,j},\cU_{i,j})$, and that the number of elements in a class $[(i,j)]$ is equal to $\#\big(\cU_{i,j}\cap \vphi^{-1}(b)\big)$ (recall that the chosen basis are linked). That is
$$
\sum_{(\vec{Y}(t),\cU)\in \sP}\#\big(\cU\cap \vphi^{-1}(b)\big)=\fr\cdot d
$$

However, Lemma \ref{lem: sum of branches} implies that  $\#\big(\cU\cap \vphi^{-1}(b)\big)=\#\sU_{(\vec{Y}(t),\cU)}$, hence the Corollary. 
\end{proof}

\begin{proof}[Proof of Theorem \ref{thm: main}]
\emph{(1)} Let $r\in (0,1]$ such that $\vphi(\cU)=\sD_s(b,r^-)$ and put $\vphi^{-1}\big(\sD_s(b,r^-)\big)=\cup_{i=1}^l\sD_t(a_i',r_i^-)$. Let further $i'$ and $m_{\cP,\cU}$ be as in Lemma \ref{lem: sum up}. Then, by Lemma \ref{lem: radius crit} it is enough to prove that for any $R>r_{i'}$, there is no horizontal element $m'$ of $M\otimes \cO_t(a_{i'},r_{i'}^-)$ whose restriction to $\cU$ is $m_{\cP,\cU}$ and for this, we may assume that $r_{i'}<1$.

If $\cU=U$ the claim is clear, as in this case $r_{i'}$ is the radius of convergence of $m_{\cP,\cU}$ hence it cannot be a restriction of an element having a bigger radius of convergence. If $\cU\neq U$, then $\cU$ is a branch at some point $\eta\in \cB_{\vphi,b}(\cP)$. Let $\cV$ be any other branch at $\eta$. Then, if $m'$ is the asserted horizontal element with radius of convergence $R$, its restriction to $\cV$ must be 0, by the construction of $v_{\cP,\cU,t}$ hence it is zero everywhere which is a contradiction. Hence, $\cR_b(\bV_r(s)\,v_{\cP,\cU,s})=r$.

\emph{(2)} To prove that the basis is optimal, we will use criterion \emph{(3)} in Lemma \ref{lem: optimal criterion}. 

Let $\vec{f}(s)$ be a horizontal element of $(M_\vphi,D_s)$ at $b$ and let $\cR_b\big(\vec{f}(s)\big)=r$. As usual, let us put $\vphi^{-1}\big(\sD_s(b,r^-)\big)=\cup_{i=1}^l\sD_t(a_i',r_i^-)$ and let $\vec{g}_i(t)$ be the horizontal element of $M\otimes \cO_t(a_i',r_i^-)$ such that $\big(\oplus_{i=1}^l\vec{g}_i(t)\big)_\vphi=\vec{f}(s)$. 

Then, each $\vec{g}_i(t)$ is a horizontal element of $M$ at $a_i'$  of radius of convergence greater than or equal to $r_i$ and hence can be written as a linear combination of the form 
$$
\vec{g}_i(t)=\sum_{j=1}^\fr\alpha_{i,j}\cdot\vec{Y}_{a_i',j}(t),
$$
where by Lemma \ref{lem: optimal criterion} only those $j$ for which $\vec{Y}_{a_i',j}(t)$ has radius of convergence greater than or equal to $r_i$ may have $\alpha_{i,j}\neq 0$. 
 Let us put for convenience $I$ to denote a set of those $i$ for which there is some $j$ with $\alpha_{i,j}\neq 0$. Further, for $i\in I$, let $I(i)$ denote the set of those $j$ for which $\alpha_{i,j}\neq 0$ and let $j(i)$ denote minimum of $I(i)$.

We will argue inductively on $r$ (since the radii of convergence of all horizontal elements form a discrete set). 

Suppose that $r=1$. Then, each $r_i=1$ and for $i\in I$ and $j\in I(i)$ we have $\big(\vec{Y}_{a_i',j}(t),\sD_t(a_i',1^-)\big)\in\sP$ and  
$$
\vec{f}(s)=\sum_{i\in I}\sum_{j\in I(i)}\alpha_{i,j}\cdot\bV_\fr(s)\,v_{\big(\vec{Y}_{a_i',j}(s),\sD_t(a_i',1^-)\big),\sD_t(a_i',1^-),s},
$$
which affirms Lemma \ref{lem: optimal criterion} \emph{(3)}.

Suppose now that $r<1$ and that the statement of the theorem holds for every horizontal element of $M_\vphi$ which has radius of convergence bigger than $r$. For $i=1,\dots,l$, let $I'(i)\subseteq\{1,\dots,\fr\}$ contain those $j$ for which the function $\alpha_{i,j}\cdot\vec{Y}_{a_i',j}(t)$ has radius of convergence exactly $r_i$. Furthermore, let $I''(i)\subseteq \{1,\dots,\fr\}$ contain those $j$ for which the function $\alpha_{i,j}\cdot\vec{Y}_{a_i',j}(t)$ has radius of convergence $r_{i,j}>r_i$ and $\sD_t(a_i',r_i^-)$ is a branch at some $\eta\in \cB_{\vphi,b}\big((\vec{Y}_{a_i',j},\sD_t(a_i',r_{i,j}^-))\big)$. Moreover, in this case we also ask that there is a branch $\cV$ at $\eta$ that contains some $a_o'\in\{a_1',\dots,a_l'\}$ and 
$$
\alpha_{o,j}\cdot\vec{Y}_{a_o',j}(t)=\alpha_{o,j}\cdot\vec{Y}_{a_i',j}(t)\neq \alpha_{i,j}\cdot\vec{Y}_{a_i',j}.
$$
(Note that we have $\vec{Y}_{a_o',j}(t)=\vec{Y}_{a_i',j}(t)$ since the bases are linked.) Finally we put $I_0(i):=I'(i)\cup I''(i)$ and set $I_0$ to denote the set of those $i=1,\dots,l$, for which $I_0(i)\neq \emptyset$.

\emph{Claim 1.
The radius of convergence of 
$$
\vec{Z}_1(s):=\left(\bigoplus_{i=1}^l\sum_{j\in I_0(i)}\alpha_{i,j}\cdot\vec{Y}_{a_i',j}(t)\right)_\vphi
$$
at $b$ is equal to $r$.
}
\begin{proof}[Proof of Claim 1.]
This amounts to spelling down the discussion before Lemma \ref{lem: radius crit}.

Suppose that this radius is $R>r$ and let us put $\vphi^{-1}\big(\sD_s(b,R^-)\big)=\cup_{i=1}^{l'}\sD_t(a_i'',R_i^-)$, where the union is disjoint and where as assume, as we can, that $\{a_1'',\dots,a_{l'}''\}\subseteq \{a_1',\dots,a_l'\}$. Let $i\in\{1,\dots,l'\}$ and $i_0\in\{1,\dots,l\}$ with $a_{i_0}'=a_i''$ and with $I_0(i_0)\neq \emptyset$. Further, for each $i=1,\dots,l'$, let 
$$
\vec{G}_i(t):=\sum_{j=1}^\fr\beta_{i,j}\cdot \vec{Y}_{a_i'',j}(t)
$$
be the horizontal element of $M_{a_i'',R_i^-}$ so that we have $\left(\oplus_{i=1}^{l'}\vec{G}_i(t)\right)_\vphi=\vec{Z}_1(s)$. We also note that  each function $\beta_{i,j}\cdot\vec{Y}_{a_i'',j}(t)$ has radius of convergence at least $R_i$ (Lemma \ref{lem: optimal criterion}).

On the other side, the restriction of $\vec{G}_i(t)$ to $\sD_t(a_{i_0}',r_{i_0}^-)$ is $\vec{g}_{i_0}(t)$, which gives us $\beta_{i,j}=\alpha_{i_0,j}$ and $\vec{Y}_{a_i'',j}=\vec{Y}_{a_{i_0}',j}(t)$. This already implies that $I'(i)$ is empty, hence $I_0(i)=I''(i)$. Let $j\in I''(i)$ and let $a_o'$ and $\cV$ be as in the definition of $I''(i)$. Then, $\cV=\sD_t(a_o',r_i^-)\subseteq \sD_t(a_i'',R_i^-)$ and the restriction of $\vec{G}_i(t)$ to $\cV$ is $\vec{g}_{o}(t)$. However, this is impossible since $\beta_{i,j}\vec{Y}_{a_i'',j}(t)=\alpha_{i_0,j}\cdot \vec{Y}_{a_{i_0}',j}(t)\neq \alpha_{o,j}\cdot\vec{Y}_{a_o,j}(t)$. Hence, $I_0(i)$ must be empty.
\end{proof}

\emph{Claim 2. The radius of convergence of 
$$
\vec{Z}_2(s):=\vec{f}(s)-\vec{Z}_1(s)=\left(\bigoplus_{i=1}^l\sum_{\substack{j=1\\j\notin I_0(i)}}^\fr\alpha_{i,j}\cdot \vec{Y}_{a_i',j}(t)\right)_\vphi
$$
at $b$ is bigger than $r$.
}
\begin{proof}[Proof of Claim 2.]
For a fixed $i=1,\dots,d$ and $j\in\{1,\dots,\fr\}\setminus I_0(i)$ we note that each function $\alpha_{i,j}\cdot\vec{Y}_{a_i',j}(t)$ has radius of convergence bigger than $r_i$. Let $r_i'$ denote the smallest among these radii (hence $r'_i>r_i$). 

Let $\eta\in \cB_{\vphi,b}$ such that $\sD_t(a_i',r_i^-)$ is a branch at $\eta$ and let $\cV_{i,1}:=\sD_t(a_{o_1}',r_i^-),\dots,\cV_{i,m}:=\sD_t(a_{o_m}',r_i^-)$ be all the other branches at $\eta$ (with $o_1,\dots,o_m\in \{1,\dots,l\}$). Let $\cV_i$ be an open disc of radius at most $r_i'$ that contains $\eta$ and such that $\cV_i\setminus\{\eta\}$ has no other branch as a connected component except for $\cV_{i,1},\dots,\cV_{i,m}$. Let $r_i''$ be the radius of $\vphi(\cV_i)$ and finally put $R:=\min\{r_1'',\dots,r_l''\}$.

Let $\vphi^{-1}\big(\sD_s(b,R^-)\big)=\cup_{i=1}^{l'}\sD_t(a_i'',R_i^-)$, union being disjoint and with the usual assumption that $\{a_1'',\dots,a_{l'}''\}\subseteq \{a_1',\dots,a_l'\}$. For $i=1,\dots,l'$ let $o(i)$ be any index so that we have $a_{i}''=a_{o(i)}'$. Then, by previous constructions we have that the function 
$$
\vec{H}_{i}(t):=\sum_{\substack{j=1\\j\notin I_0(i)}}^\fr\alpha_{o(i),j}\cdot \vec{Y}_{a_{o(i)}',j}(t)
$$
restricts to 
$$
\sum_{\substack{j=1\\j\notin I_0(n)}}^\fr\alpha_{n,j}\cdot\vec{Y}_{a_n',j}(t)
$$
for every $n$ for which $\sD_t(a_{n}',r_n^-)\subset \sD_t(a_i'',R_i^-)$. Lemma \ref{lem: radius crit} then implies the claim.
\end{proof}

We continue the proof of the theorem. Two claims imply that $I_0\neq \emptyset$ because of our assumption on the radius of convergence of $\vec{f}(s)$. Furthermore, for every $i\in I_0$ and $j\in I'(i)$, we have  $\big(\vec{Y}_{a_i',j}(t),\sD_t(a_i',r_i^-)\big)\in\sP$ and by part \emph{(1)} of the theorem, the radius of convergence of function $\bV_\fr(s)\,v_{\big(\vec{Y}_{a_i',j}(t),\sD_t(a_i',r_i^-)\big),\sD_t(a_i',r_i^-),s}$ is equal to $r$. On the other side, if $j\in I''(i)$, then Lemma \ref{lem: help 2} implies that $\bV_\fr(s)\,v_{\big(\vec{Y}_{a_i',j}(t),\sD_t(a_i',r_{i,j}^-)\big),\sD_t(a_i',r_i^-),s}$ is a linear combination of the functions $\bV_\fr(s)\,v_{\cP,\cU,s}$, where $\cU\in \sU_\cP$, $\cP\in \sP$, and where only those with radius of convergence bigger than or equal to $r$ may have nonzero coefficients (with at least one such function appearing with nonzero coefficient). 

In other words, $\vec{Z}_1(s)$ can be written as indicated by criterion \emph{(3)} in Lemma \ref{lem: optimal criterion} and since by inductive hypothesis the same is true for $\vec{Z}_2(s)$ as well, it must also be true for $\vec{f}(s)$. The theorem follows.
\end{proof}


%
%

\begin{rmk}
 Theorem \ref{thm: main} should be compared to \cite[Theorem 3.6.]{BojPoi} and can be seen as a refinement of the latter result.
\end{rmk}

\begin{rmk}
Instead of horizontal elements, one may look for solutions of a differential module $(M,D_t)$ at $a\in\sD_t^\pm$. Recall that these are the horizontal morphisms of differential modules $(M,D_t)\to (k[[t-a]],d_t)$. Then, once we fix a basis for $M$, a solution corresponds to a vector of analytic functions in $k[[t-a]]^\fr$ (the entries are the images of the elements of the basis). Then, one may ask what are the analogues of Theorems \ref{thm: fund sol}, \ref{thm: main} and \ref{thm: opt basis trivial} in this setting?

To the best of our knowledge, the only result in this direction is an analogue of Theorem \ref{thm: fund sol} for the case of a trivial differential module presented in \cite{BaldaMilano}. We will address this question in a subsequent article.
\end{rmk}

\subsection{Examples}\hfill\\

Let $\vphi:\sD_t^-\to \sD_s^-$ be defined by $s=f(t):=\sum_{i=1}^p{p \choose i}\cdot t^i$, where $p$ is the residual characteristic of $k$. Then, $\vphi$ is a finite, \'etale morphism of degree $p$ (off-centered Frobenius map). 

Let us put $f_p(s):=1+\sum_{j=1}^\infty{1/p\choose j}\cdot s^j$. Then, as one can formally check we have $f_p(s)^p=1+s$ and it is classical fact that $\cR_0\big(f
_p(s)\big)=|p|^{\frac{p}{p-1}}$ as can be checked by studying valuation polygon of $f_p(s)$ (see for example \cite[Chapter II]{DGS}). 

Furthermore, we have that the preimages of $0$ by $\vphi$ are given by $a_i:=-1+\zeta_p^i$, $i=1,\dots,p$, where $\zeta_p$ is a primitive $p$-root of 1.

The associated polynomial giving the morphism $\vphi$ is $P(s,X):=\sum_{i=1}^p{p \choose i}\cdot X^i-s$ and $p$ solutions of $P(s,X)=0$ are given by 
$$
u_{a_i}(s):=-1+\zeta_p^i\cdot f_p(s)=a_i+\sum_{j=1}^\infty\zeta_p^i\,{1/p \choose j}\cdot s^j.
$$ 
Further, from relation $s+1=(t+1)^p=f(t)+1$, one obtains 
\begin{equation}\label{eq: derivative}
\frac{1}{f'(t)}=\frac{1}{p\cdot(s+1)}+\frac{1}{p\cdot(s+1)}t.
\end{equation}
\subsubsection{} Suppose that $p=2$. Then, $\T_{\vphi,0}$ is given by two segments emanating from points $0$ and $-2$ (the preimages of $0$) that meet at point $\eta_{0,|2|}=\eta_{-2,|2|}$. This point is the only branching point and it has two branches $\sD_t(0,|2|^-)$ and $\sD_t(-2,|2|^-)$, both of which are sent by $\vphi$ to $\sD_s(0,|2|^2)$.

If we consider trivial differential module $(T,D_t)$ and its direct image by $\vphi$, then by using \eqref{eq: derivative} we obtain that the the system of differential equations associated to $(T_\vphi,D_s)$ it (see \eqref{eq: assign system} and discussion before it) is given by 
$$
\frac{d}{ds}\vec{Y}(s)=-\frac{1}{2}\cdot \begin{bmatrix}
                              0 & \frac{1}{s+1}\\
                              0 & \frac{1}{s+1}
                             \end{bmatrix}\,\vec{Y}(s).
$$
Of course, the previous system can be solved directly and with a little effort an optimal basis can be found. However, we can calculate 
$$
\bV(s)=\frac{1}{2\cdot f_2(s)}\cdot\begin{bmatrix}
       -1+f_2(s) & 1+f_2(s)\\
       -1 & 1
       \end{bmatrix}
$$
and by Theorem \ref{thm: opt basis trivial} obtain an optimal basis for $(T,D_t)_\vphi$ at $0$ in the form
$$
\Bigg\{\bV(s)\,\begin{bmatrix}
                1\\
                1
               \end{bmatrix},\bV(s)\,\begin{bmatrix}
                0\\
                1
               \end{bmatrix}
\Bigg\}=\Bigg\{\begin{bmatrix}
                1\\
                0
               \end{bmatrix},\begin{bmatrix}
                \frac{1}{2\cdot f_2(s)}+\frac{1}{2}\\
                \frac{1}{2\cdot f_2(s)}
               \end{bmatrix}
\Bigg\},
$$
where the second horizontal element has $|2|^2$ as radius of convergence. 

If instead we consider differential module $(M=\cO_t^-\, e,D_t)$, given by $D_t(e)=-e$, we see that for every $a\in \sD_t^-(k)$, a horizontal element at $a$ is given by $\exp(t-a)\cdot e$ with radius of convergence $|2|$. Then, we have two fundamental pairs $\cP_1:=\big(\exp(t+2),\sD_t(-2,|2|^-)\big)$ and $\cP_2:=\big(\exp(t),\sD_t(0,|2|^-)\big)$, so Theorem \ref{thm: main} gives us that an optimal basis for $(M_\vphi,D_s)$ at $0$ is given by 
\begin{align*}
\Bigg\{v_{\cP_1,\sD_t(-2,|2|^-),s}, &v_{\cP_1,\sD_t(0,|2|^-),s}\Bigg\}=\\ &\Bigg\{\begin{bmatrix}
                              \frac{-1+f_2(s)}{2\cdot f_2(s)}\cdot \exp(1-f_2(s))\\
                             \frac{-1}{2\cdot f_2(s)}\cdot \exp(1-f_2(s))
                             \end{bmatrix},\begin{bmatrix}
                             \frac{1+f_2(s)}{2\cdot f_2(s)}\cdot \exp(-1+f_2(s))\\
                             \frac{1}{2\cdot f_2(s)}\cdot \exp(-1+f_2(s))
                             \end{bmatrix}
\Bigg\},
\end{align*}
both of which have the same radius of convergence, namely $|2|^2$.

On the other side, since we have 
$$
\frac{1}{f'(t)}\cdot D_t(e)=-\frac{1}{2\cdot(s+1)}\cdot e-\frac{1}{2\cdot(s+1)}\cdot t\cdot e
$$
and 
$$
\frac{1}{f'(t)}\cdot D_t(t\cdot e)=\frac{1-s}{2\cdot(s+1)}\cdot e+ \frac{1}{s+1}\cdot t\cdot e,
$$
the associated differential system to $(M,D_t)_\vphi$ is given by 
$$
\frac{d}{ds}\vec{Y}(s)=\frac{1}{2}\begin{bmatrix}
                             \frac{1}{s+1} & \frac{s-1}{s+1}\\
                             \frac{1}{s+1} & -\frac{1}{s+1}
                             \end{bmatrix}\vec{Y}(s).
$$
and one may solve the previous system directly.

\subsubsection{} Suppose now that $p=3$. Then, preimages of $0$ are given by $-1+\zeta_3^i$, $i=1,2,3$ and $\T_{\vphi,0}$ is given by three segments emanating from these points and which meet at $\eta_{0,|3|^{\frac{1}{2}}}$. It is the only branching point and there are three branches $\sD_t(-1+\zeta_3^i,(|3|^{\frac{1}{2}})^-)$, $i=1,2,3$ all of which are sent to $\sD_t(0,(|3|^{\frac{3}{2}})^-)$. A direct calculation shows that in this case 
$$
\bV(s)=\begin{bmatrix}
  -\frac{\big(-1+\zeta_3^2\cdot f(s)\big)\cdot\big(-1+f(s)\big)}{3\cdot(\zeta_3+1)\cdot f(s)^2} & \frac{(-1+\zeta_3\cdot f(s))\cdot\big(-1+f(s)\big)}{3\cdot \zeta_3\cdot f(s)^2} & \frac{\big(-1+\zeta_3\cdot f(s)\big)\cdot \big(-1+\zeta_3^2\cdot f(s)\big)}{3\cdot f(s)^2}\\  
   \frac{\zeta_3^2\cdot f(s)+f(s)-2}{3\cdot (\zeta_3+1)\cdot f(s)^2} & -\frac{\zeta_3\cdot f(s)+f(s)-2}{3\cdot \zeta_3\cdot f(s)^2} & -\frac{\zeta_3^2\cdot f(s)+\zeta_3\cdot f(s)-2}{3\cdot f(s)^2}\\
   -\frac{1}{3\cdot (\zeta_3+1)\cdot f(s)^2}  & \frac{1}{3\cdot \zeta_3\cdot f(s)^2} & \frac{1}{3\cdot f(s)^2} 
 \end{bmatrix},
$$
and by Theorem \ref{thm: opt basis trivial} one can take for an optimal basis of $(T)_\vphi,D_s)$ at $0$ to be the set:
\begin{align*}
&\Bigg\{\bV(s)\,\begin{bmatrix}
         1\\
         1\\
         1\\
        \end{bmatrix},\bV(s)\,\begin{bmatrix}
         0\\
         1\\
         0\\
        \end{bmatrix},\bV(s)\,\begin{bmatrix}
         0\\
         0\\
         1\\
        \end{bmatrix}\Bigg\}=\\
        &\Bigg\{\begin{bmatrix}
                                      1\\
                                      0\\
                                      0
                                     \end{bmatrix}
,\begin{bmatrix}
\frac{(-1+\zeta_3\cdot f(s))\cdot\big(-1+f(s)\big)}{3\cdot \zeta_3\cdot f(s)^2} \\
-\frac{\zeta_3\cdot f(s)+f(s)-2}{3\cdot \zeta_3\cdot f(s)^2}\\
\frac{1}{3\cdot \zeta_3\cdot f(s)^2}
                                     \end{bmatrix},\begin{bmatrix}
\frac{\big(-1+\zeta_3\cdot f(s)\big)\cdot \big(-1+\zeta_3^2\cdot f(s)\big)}{3\cdot f(s)^2}\\
-\frac{\zeta_3^2\cdot f(s)+\zeta_3\cdot f(s)-2}{3\cdot f(s)^2}\\
\frac{1}{3\cdot f(s)^2} 
                                     \end{bmatrix}\Bigg\},
\end{align*}
where the latter two solutions have radius of convergence equal to $|3|^{\frac{3}{2}}$.

\bibliographystyle{plain}
\bibliography{biblio}
\end{document}